\documentclass[a4paper]{amsart}
\usepackage[margin=2cm]{geometry}
\usepackage[english]{babel}
\usepackage{amssymb,amsmath,amsthm,amsfonts}
\usepackage[colorlinks]{hyperref}
\usepackage{dsfont}
\usepackage[showonlyrefs]{mathtools}
\mathtoolsset{showonlyrefs=true}

\newcommand{\N}{\mathbb{N}}

\newcommand{\R}{\mathbb{R}}

\newcommand{\eps}{\varepsilon}
\newcommand{\Om}{\Omega}

\newcommand{\G}{\mathcal G_{\eps,\eta}}
\newcommand{\cp}{\mathrm{cap}}

\newcommand{\uu}{\widetilde{u}}
\newcommand{\vr}{v_\rho}
\newcommand{\la}{\lambda}
\def\Omt{\widetilde{\Om}}
\def\Omh{\widehat{\Om}}
\def\mh{\widehat{m}}

\def\N{\mathbb{N}}

\def\rbb{\mathbb{R}}
\def\R{\mathbb{R}}
\def\rc{\mathcal{R}}

\def\ut{\widetilde{u}}

\def\de{\delta}

\def\hc{\mathcal{H}}
\def\H{\mathcal{H}}

\def\bal{\begin{aligned}}
\def\eal{\end{aligned}}
\def\case#1#2{\par\noindent{\underline{\it Case~#1.}}\emph{ #2}\\}

\newtheorem{proposition}{Proposition}[section]
\newtheorem{theorem}[proposition]{Theorem}
\newtheorem{corollary}[proposition]{Corollary}
\newtheorem{lemma}[proposition]{Lemma}

\newtheorem{definition}[proposition]{Definition}
\newtheorem{remark}[proposition]{Remark}
%\numberwithin{equation}{section}
\newcommand{\beq}{\begin{equation}}
\newcommand{\eeq}{\end{equation}}
\newcommand{\ben}{\begin{enumerate}}
\newcommand{\een}{\end{enumerate}}
\newcommand{\bit}{\begin{itemize}}
\newcommand{\eit}{\end{itemize}}

\newcommand{\mean}[1]{\,-\hskip-1.08em\int_{#1}} %media integrale displayed
 %media

\title{A spectral shape optimization problem with a nonlocal competing term}
\author{Dario Mazzoleni}
\address{Dipartimento di Matematica F. Casorati\\
Universit\`a di Pavia\\
Via Ferrata 5, 27100 Pavia (Italy)}
\email{dario.mazzoleni@unipv.it}

\author{Berardo Ruffini}
\address{Dipartimento di Matematica\\
Universit\`a di Bologna\\
Piazza di Porta San Donato 5, 40126 Bologna (Italy) }
\email{berardo.ruffini@unibo.it}

\keywords{Riesz energy, torsion energy, eigenvalues of the Dirichlet Laplacian, regularity of the free boundary, quantitative Faber Krahn}

\subjclass[2010]{49Q10; 49R05; 47A75}

\thanks{This paper has been partially supported by the INdAM-GNAMPA project 2019 ``Ottimizzazione spettrale non lineare''. B. Ruffini was also supported by the project ANR-18-CE40-0013 SHAPO financed by the
French Agence Nationale de la Recherche (ANR)}

\begin{document}
	
\begin{abstract}
We study the minimization of a spectral functional made as the sum of the first eigenvalue of the Dirichlet Laplacian and the relative strength of a Riesz-type interaction functional. We show that when 
the Riesz repulsion strength is below a critical value, existence of minimizers occurs.  Then we prove, by means of an expansion analysis, that the ball is a rigid minimizer when the Riesz repulsion is small enough.
 { Eventually we show that for certain regimes of the Riesz repulsion, regular minimizers do not exist.}
\end{abstract}
	
\maketitle

\tableofcontents

%%%%%%%%%%%%%%%%%%%%%
\section{Introduction}
\subsection*{Foreword}
In this work we study the minimization under volume constraint of energies of the form
\[
\mathcal F(\Omega)= \mathcal S(\Omega)+\int_\Omega\int_\Omega\frac{dx\,dy}{|x-y|^{N-\alpha}},
\]
where $\mathcal S$ is either the torsion energy $E$ or the first eigenvalue of the Dirichlet-Laplacian $\lambda_1$, $N\ge2$ and $\alpha\in(0,N)$.

It is well-known that both the torsion energy and the first eigenvalue of the Dirichlet-Laplacian are minimized, among sets of fixed measure, by the ball. These results, obtained with symmetrization arguments, can be summarized in a scale invariant form as \[
|\Omega|^{-\frac{N}{N+2}}E(\Omega)\ge |B|^\frac{N}{N+2}E(B),\qquad |\Omega|^{\frac{2}{N}}\lambda_1(\Omega)\ge |B|^{\frac{2}{N}}\lambda_1(B),
\]
where $B$ is a generic ball and $|\Om|$ denotes the Lebesgue measure in $\R^N$ of the set $\Om$.
In the literature, they are called \emph{Saint-Venant} and \emph{Faber-Krahn} inequalities, respectively. Both the inequalities are rigid, that is equality holds if and only if  $\Om$ is a ball up to null capacity.
We refer to \cite{hpnew} for a comprehensive background about these problems.

In sharp contrast, the  \emph{Riesz Energy} functional 
\[
V_\alpha(\Omega):=\int_\Omega\int_\Omega\frac{dx\,dy}{|x-y|^{N-\alpha}},
\]
which appears as the second addend in the definition of $\mathcal F$, increases while symmetrizing the set $\Omega$, and it is uniquely (up to a negligible set) maximized by balls \cite[Theorem 3.7]{LL}, leading to a competition while seeking to minimize $\mathcal F$.

\subsection*{Motivation and background}

In recents years the research of quantitative stability of various geometric, functional and spectral inequalities received a great attention, and this gave a strong impulse to the development of the field. In turn this led to a renewed interest in several variational models where a competition between a cohesive term is balanced by a repulsive term. A non-exhaustive list of papers in this field is \cite{aco,bonacini14,f2m3,gnr1,gnr2,km,cp1,cp2,frank15,MurNovRuf,julin14,lu14,FigMag,MurNovRuf}.

Arguably the most famous instance of such variational models is the Gamow \emph{liquid drop} model  introduced in \cite{gamow30} to describe the stability of nuclear matter. Such a model is made up by the sum of a surface perimeter term and a Riesz energy term of a  set $\Omega\subset\R^3$
\[
\mathcal{J}(\Omega):=P(\Omega)+\int_\Omega\int_\Omega\frac{dx\,dy}{|x-y|}.
\]
The usual mathematical questions  about this class of models are:
\begin{enumerate}
\item To investigate existence and non existence of minimizers depending on the values of the $mass$ of competitors, that is, depending on the choice of the volume constraint.
\item To study the regularity of minimizers, if existence holds.
\item To characterize the ball as the unique minimizer as long as the mass is small enough.
\end{enumerate} 
In particular, regarding the liquid drop model, in \cite{cp2} Choksi and Peletier conjectured\footnote{The conjecture was formulated only for $N=3$ but it is commonly extended to any dimension $N\ge2$} that there exists a critical threshold mass $\overline m$ such that minimizers exist only if   $|\Omega|\le\overline m$. Questions (1) and (3) above, as well as such a conjecture, follow the intuitive idea that because of the different scaling of the functionals, if the mass is small  then the perimeter term is dominant, while if the mass is large then the Riesz term dominates, and disconnected configurations are favored. Since the Riesz energy decreases as the connected components of a set are pushed away from each other, this leads to non-existence. In fact, one can show that if the mass is approaching $0$, then the problem reduces to the classical isoperimetric problem. The Choksi-Peletier conjecture,  although being still open in its generality, was partially solved in \cite{km,f2m3, julin14} where the authors  show that there are thresholds $0<m_{small}<m_{big}$ such that the ball is the unique minimizer for $m<m_{small}$ and existence does not occur if $m>m_{big}$. 
The scope of this paper is to begin this kind of analysis when the perimeter is replaced by a spectral functional.
 
\subsection{Main results}
The  main result of the paper is the following. 

\begin{theorem}\label{thm:mainvero}
Let $N\ge 2$, $\alpha\in(1,N)$. There exists $\eps_{\lambda_1}=\eps_{\lambda_1}(N,\alpha)>0$ such that, for all $\eps\leq \eps_{\lambda_1}$, the ball of unitary measure is the unique minimizer for problem 
\begin{equation}\label{eq:minla1}
\min\Big\{\mathcal \lambda_1(\Om)+\eps V_\alpha(\Om) : \Om\subset \R^N,\;|\Om|=1\Big\}.
\end{equation}
\end{theorem}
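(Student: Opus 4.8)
The plan is a quantitative linearisation argument in the spirit of the works on the liquid drop model cited above, with the Faber--Krahn inequality playing the role of the isoperimetric inequality. I argue by contradiction: if the thesis fails there are $\eps_k\downarrow 0$ and sets $\Om_k$ with $|\Om_k|=1$, not balls, such that $\lambda_1(\Om_k)+\eps_k V_\alpha(\Om_k)\le\lambda_1(B)+\eps_k V_\alpha(B)$; by the existence result quoted above I may take each $\Om_k$ to be a minimiser. The first goal is to show that, for $k$ large, $\Om_k$ is a \emph{nearly spherical set}. Since $0\le V_\alpha(\Om_k)\le V_\alpha(B)$ (balls maximise $V_\alpha$ at fixed volume, as recalled above), the minimality inequality forces $\lambda_1(\Om_k)\to\lambda_1(B)$, hence, by the compactness built into the Faber--Krahn inequality, $|\Om_k\triangle B|\to0$ up to translations. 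Moreover $\Om_k$ is a quasi-minimiser of $\lambda_1$ alone: encoding $|\Om|=1$ through a Lagrange multiplier (uniformly bounded in $k$) and using the elementary bound $|V_\alpha(\Om)-V_\alpha(\Om')|\le C\,|\Om\triangle\Om'|$, valid for sets inside a fixed ball, one gets $\lambda_1(\Om_k)\le\lambda_1(\Om')+C(1+\eps_k)\,|\Om_k\triangle\Om'|$ for all competitors $\Om'$. By the regularity theory for quasi-minimisers of the first eigenvalue under a volume constraint — here $\alpha>1$ makes the Riesz potential $x\mapsto\int_{\Om_k}|x-y|^{\alpha-N}\,dy$ of class $C^1$, so that it enters only as a mild lower order term in the optimality condition on $\partial\Om_k$ — the $\Om_k$ are connected with $C^{1,\gamma}$ boundary and uniform estimates, and, together with $|\Om_k\triangle B|\to0$ and the usual improvement of convergence, $\partial\Om_k=\{(1+u_k(x))\,x:x\in\partial B\}$ with $\|u_k\|_{C^{1,\gamma}(\partial B)}\to0$. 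After one more translation I may assume the barycentre of $\Om_k$ is at the origin, so that the degree-one spherical-harmonic component of $u_k$ is $O(\|u_k\|_{L^2}^2)$; the volume constraint makes the degree-zero component of the same order.

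On nearly spherical sets I then use two expansions around $B$. The sharp linearised Faber--Krahn inequality provides $c_N>0$ with
\[
\lambda_1(\Om_{u})-\lambda_1(B)\ \ge\ c_N\,\|u\|_{H^{1/2}(\partial B)}^2-o\big(\|u\|_{H^{1/2}}^2\big)
\]
whenever $u$ satisfies the two normalisations above and $\|u\|_{C^1}$ is small; this is the second-order shape expansion of $\lambda_1$ at the ball, coercive on the orthogonal complement of the degree-zero and degree-one harmonics, the low-frequency components being negligible by the normalisations. For the competing term, the assumption $\alpha>1$ means that $|x-y|^{\alpha-N}$ is integrable over $(N-1)$--dimensional spheres, so $(v,w)\mapsto\int_{\partial B}\int_{\partial B}|x-y|^{\alpha-N}v(x)w(y)$ is a bounded bilinear form on $L^2(\partial B)$; the second variation of $V_\alpha$ at the ball is built from this form and from bounded curvature corrections, and since $B$ maximises $V_\alpha$ one obtains
\[
V_\alpha(B)-V_\alpha(\Om_{u})\ \le\ C_N\,\|u\|_{L^2(\partial B)}^2+o\big(\|u\|_{L^2}^2\big)\ \le\ C_N\,\|u\|_{H^{1/2}(\partial B)}^2+o\big(\|u\|_{H^{1/2}}^2\big).
\]
(It is exactly the condition $\alpha>1$ that keeps this quadratic form bounded, so that the Riesz loss is controlled by the same norm that controls the Faber--Krahn gain; for $\alpha\le1$ this second variation is an unbounded form and the expansion degenerates.)

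Plugging both expansions into the minimality inequality rewritten as $\lambda_1(\Om_k)-\lambda_1(B)\le\eps_k\big(V_\alpha(B)-V_\alpha(\Om_k)\big)$ yields
\[
c_N\,\|u_k\|_{H^{1/2}}^2-o\big(\|u_k\|_{H^{1/2}}^2\big)\ \le\ C_N\,\eps_k\,\|u_k\|_{H^{1/2}}^2+o\big(\|u_k\|_{H^{1/2}}^2\big).
\]
As $\Om_k\ne B$ we have $u_k\not\equiv0$; dividing by $\|u_k\|_{H^{1/2}}^2$ and letting $k\to\infty$ gives $c_N\le0$, a contradiction. Hence for $\eps$ below some $\eps_{\lambda_1}$ the ball of unit measure is the unique minimiser, up to translations and sets of zero capacity. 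The two substantive ingredients are the sharp linearised Faber--Krahn bound and the uniform regularity; the \textbf{main obstacle} is the latter, namely obtaining the $C^{1,\gamma}$ estimates uniformly as $\eps_k\to0$ and excluding singular boundary points in the near-spherical regime — this is what legitimises the reduction to nearly spherical sets, after which the matching of exponents (quadratic versus quadratic) is the easy part.
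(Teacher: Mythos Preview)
Your endgame is correct and mirrors the paper's: once minimisers are known to be nearly spherical, the comparison of second-order expansions (the linearised Faber--Krahn lower bound versus the $L^2$-bounded second variation of $V_\alpha$, the latter finite precisely because $\alpha>1$) forces the ball for $\eps$ small. But there is a genuine gap upstream. You write ``by the existence result quoted above I may take each $\Om_k$ to be a minimiser'' and then invoke regularity with uniform constants; yet neither existence of minimisers in all of $\R^N$ nor any a priori diameter bound is available. Competitors may carry long thin tails of negligible measure and negligible $\lambda_1$-cost, and the Alt--Caffarelli-type density estimates underlying the $C^{1,\gamma}$ theory carry constants that depend on an equiboundedness radius. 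You do flag uniform regularity as the main obstacle, which is right, but the specific missing ingredient is the reduction to equibounded competitors. The paper devotes all of Section~\ref{sect:surgery} (Proposition~\ref{prop:surgery}) to this: a surgery argument showing that any connected competitor with $\lambda_1$ close to $\lambda_1(B)$ can be replaced, without increasing $\lambda_1+\eps V_\alpha$, by one of diameter at most $D(N,\alpha)$. Only after this reduction do existence in $B_{D(N,\alpha)}$ and uniform free-boundary estimates become available, and only then is your passage to nearly spherical sets justified.

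There is also a difference of route worth recording. The paper does not develop the regularity theory directly for minimisers of $\lambda_1+\eps V_\alpha$; instead it carries out the full free-boundary analysis for the torsion functional $E+\eps V_\alpha$ among equibounded sets (Sections~\ref{exg}--\ref{HR}), proves rigidity of the ball there (Theorem~\ref{thm:mainvero2}), and then transfers the conclusion to $\lambda_1$ via the Kohler--Jobin inequality (Proposition~\ref{thm:minla1BR}). Your direct approach through quasi-minimality of $\lambda_1$ is in principle viable, but it is not a shortcut: the surgery step is still owed, and the free-boundary analysis with the first eigenfunction (where the governing PDE is $-\Delta u=\lambda_1 u$ rather than $-\Delta u=1$) is technically no lighter.
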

 
In the case where $\mathcal S=E$ is the torsion energy, we obtain the following weaker result.

\begin{theorem}\label{thm:mainvero2}
Let $N\ge 2$, $\alpha\in(1,N)$. There exists $R_0=R_0(N)$ such that for all $R> R_0$ there exists $\eps_{E}=\eps_{E}(N,\alpha,R)$ such that, for all $\eps\leq \eps_{E}$, the ball is the unique minimizer for problem
\begin{equation}\label{eq:mintor}
\min\Big\{ E(\Om)+\eps V_\alpha(\Om) : \Om\subset \R^N,\;|\Om|=1,\,\Omega\subset B_R\Big\}.
\end{equation}
\end{theorem}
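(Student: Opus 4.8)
The plan is to adapt the ball-rigidity scheme used for Theorem~\ref{thm:mainvero} to the torsion energy, with the confinement $\Om\subset B_R$ playing the role of the existence/compactness input that is available for $\lambda_1$ but not, in this generality, for $E+\eps V_\alpha$. First I would check that in the admissible class $\{\Om:|\Om|=1,\ \Om\subset B_R\}$ a minimizer $\Om_\eps$ exists by the direct method: a minimizing sequence has torsion functions bounded in $H^1_0(B_R)$, so along a subsequence the characteristic functions converge in $L^1$; $E$ is lower semicontinuous and $V_\alpha$ is continuous along such a convergence, and the volume and inclusion constraints pass to the limit (here one uses $R>R_0(N)$, so that the unit-volume ball, together with a neighbourhood of it, is compactly contained in $B_R$). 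Comparing $\mathcal F(\Om_\eps)\le\mathcal F(B)$ for $B$ the unit-volume ball, and using $V_\alpha(\Om_\eps)\le V_\alpha(B)$ (by Riesz rearrangement) together with the quantitative Saint-Venant inequality $E(\Om)-E(B)\ge c_N\,\mathcal A(\Om)^2$ (with $\mathcal A$ the Fraenkel asymmetry), one obtains $c_N\,\mathcal A(\Om_\eps)^2\le \eps\,V_\alpha(B)$; hence $\Om_\eps\to B$ in $L^1$ as $\eps\to0$, up to a translation (which changes neither $E$ nor $V_\alpha$), and the ``far'' regime is already handled: if $\mathcal A(\Om)\ge\delta$ then $\mathcal F(\Om)>\mathcal F(B)$ as soon as $\eps<c_N\delta^2/V_\alpha(B)$.

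The near-ball regime requires upgrading this $L^1$-closeness into $C^{1,\beta}$-closeness of $\Om_\eps$ to $B$, and this is where the free-boundary regularity theory, and the hypothesis $\alpha>1$, enter. Writing the constrained problem in penalized form, $\Om_\eps$ is a quasi-minimizer of the Alt--Caffarelli torsion functional perturbed by the Riesz potential $v_{\Om_\eps}(x)=\int_{\Om_\eps}|x-y|^{-(N-\alpha)}\,dy$, which for $\alpha>1$ is Lipschitz with a bound depending only on $N,\alpha$; hence $u_{\Om_\eps}$ is Lipschitz, $\partial\Om_\eps$ enjoys the usual nondegeneracy and density estimates, and, combined with the $L^1$-convergence to $B$, for $\eps$ small $\partial\Om_\eps$ is a normal graph $\{(1+\phi_\eps(\sigma))\sigma:\sigma\in\partial B\}$ with $\|\phi_\eps\|_{C^{1,\beta}}\to0$, the relevant constants (hence $\eps_E$) depending on $R$. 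Alternatively one may run the Cicalese--Leonardi selection principle, penalizing the Fraenkel asymmetry, to reduce directly to competitors that are nearly spherical in $C^{1,\beta}$.

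It then remains to carry out the expansion analysis on such nearly spherical sets $\Om_\phi$ with $|\Om_\phi|=1$, exactly as for $\lambda_1$: a Fuglede-type computation gives $E(\Om_\phi)-E(B)\ge\mu_E\,\|\phi\|^2-o(\|\phi\|^2)$ in the natural energy norm on $\partial B$, while $V_\alpha(B)-V_\alpha(\Om_\phi)\le\mu_V\,\|\phi\|^2+o(\|\phi\|^2)$, and after discarding the $\ell=0$ spherical-harmonic mode (killed by the volume constraint) and the $\ell=1$ modes (killed by recentering) the quadratic form of $E$ is coercive and dominates that of $V_\alpha$. Consequently $\mathcal F(\Om_\phi)-\mathcal F(B)\ge(\mu_E-\eps\,\mu_V-o(1))\|\phi\|^2>0$ unless $\phi\equiv0$, so choosing $\eps\le\eps_E(N,\alpha,R)$ forces $\Om_\eps=B$, and the strict inequality gives uniqueness. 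I expect the genuinely new difficulty, compared with Theorem~\ref{thm:mainvero}, to be this regularity step — promoting $L^1$- to $C^{1,\beta}$-closeness of minimizers, and tracking the dependence of all constants on $R$ — which is also the point where the container $B_R$ is truly needed; the torsion-versus-Riesz quadratic-form comparison, though delicate, is the same mechanism as for the first eigenvalue, with the Bessel data of the torsion problem replacing those of $\lambda_1$.
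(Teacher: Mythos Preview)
Your proposal follows essentially the same strategy as the paper: penalize the volume constraint to run the Alt--Caffarelli free-boundary regularity, use the quantitative Saint-Venant inequality to get $L^1$-closeness to the ball, upgrade this via density estimates and improvement of flatness to graphical closeness (the paper pushes to $C^{2,\gamma}$ rather than $C^{1,\beta}$, since the Fuglede-type bound for torsion from \cite{brdeve} is stated at that regularity, and the Riesz-potential regularity for $\alpha>1$ enters through the optimality condition $q_u^2-\eps v_\Om=\Lambda$ on $\partial^*\Om$, giving $q_u\in C^{1,\gamma}$, rather than directly through the Lipschitz bound on $u$), and conclude by comparing second-order expansions. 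The two points you gloss over --- existence, which needs the finite-perimeter level-set trick since $H^1$-boundedness of torsion functions does not by itself yield $L^1$-compactness of the sets, and the equivalence between the constrained and penalized problems, i.e.\ proving that the penalized minimizer has exactly unit volume --- are where most of the technical effort in the paper goes, but you have the architecture right.
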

We stress that the value of the geometric constant $R_0$ can be explicitly computed from our proofs.

A  remark concerning the mass constraint is in order.
\begin{remark}\label{rmk:masseps}
A straightforward scaling argument shows  that  there exists a continuous positive function $\varepsilon(m)$ vanishing at the origin and diverging at infinity such that minimizing 
\[
\lambda_1(\Omega)+V_\alpha(\Omega),\qquad|\Omega|=m
\]
is equivalent to minimize the functional 
\[
\lambda_1(\Omega)+\varepsilon(m)V_\alpha(\Omega),\qquad |\Omega|=1,
\]
as for all $t>0$ we have \[
\la_1(t\Om)+V_\alpha(t\Om)=t^{-2}\Big(\la_1(\Om)+t^{N+\alpha+2}V_\alpha(\Om)\Big).
\]
In particular requiring the mass of competitors $m\approx t^N$ to be small is equivalent to require $\eps\approx t^{N+\alpha+2}$ to be small.

Therefore, Theorem~\ref{thm:mainvero} states that for small masses the only minimizer of $\lambda_1+V_\alpha$ is the ball, as long as $\alpha>1$, which is the analogous of the results obtained on the functional $P+V_\alpha$.

For the torsion energy the situation depends on the value of $\alpha$. Indeed for any $t>0$ one has 
\[
E(t\Omega)+V_\alpha(t\Omega)=t^{N+2}\left( E(\Omega)+t^{\alpha-2}V_\alpha(\Omega)\right),
\]
so that small values of $\varepsilon=t^{\alpha-2}$ do correspond to small values of the mass only for $\alpha >2$. 
\end{remark}

The result stated in Theorem \ref{thm:mainvero} is the  spectral analog of the existence results in \cite{km,julin14,f2m3}.
On the other hand, when dealing with the torsion energy, the result needs the additional assumption of equiboundedness of competitors. We believe such an hypothesis to be of technical nature, but its removal  seems  a  challenging task  and we do not solve it in this paper. 
We discuss this issue in the next remark. 
\begin{remark}
The problem of proving the existence of minimizers among generic subsets of $\R^N$ (instead of among equibounded sets) for spectral functionals has been a rather hot topic in the last years.
Regarding  the eigenvalues of the Dirichlet-Laplacian essentially two techniques are available in literature: one developed by Bucur in~\cite{bu}  is based on a concentration-compactness argument mixed together with  regularity results for inward minimizing sets;  the other, proposed by the first author and Pratelli in~\cite{mp}, is based on a De Giorgi type surgery argument.
Seemingly none of these techniques   works while tackling the case of the functional $E+\eps V_\alpha$. 
Even working with  a more direct  surgery-wise technique for the functional $E$ as that used in~\cite[Section~5]{brdeve} seems to fail in our setting. 
Hence we are not able to get rid of the equiboundedness assumption in Theorem~\ref{thm:mainvero2}.
\end{remark}

Restricting the class of Riesz energies to $\alpha\in(1,N)$ seems a deep  problem as well. In fact to show  Theorems \ref{thm:mainvero}, \ref{thm:mainvero2} we need a fine regularity analysis of minimizers (see the discussion below)  where the regularity of the \emph{Riesz potential}
\[
v_\Omega(x):=\int_\Omega \frac{dy}{|x-y|^{N-\alpha}}
\] 
plays a crucial role. If $\alpha\leq 1$, then $v_\Omega$ is at most of class $C^{0,\gamma}$, for some $\gamma\in(0,1]$, which is not enough for our proof to work. 

The third and last result we prove is the following, in which we show that for big values of the mass, minimizers do not exist among sets satisfying uniform density constraints { as long as $\alpha\in(N-1,N)$}.

\begin{definition}
 We say that a set $\Omega$ has the internal $\delta-$ball condition if for any $x\in\partial\Omega$ there exists a ball $B_\delta\subset\Omega$ tangent to $\partial\Omega$ in $x$.
We call $\mathcal U (\delta)$ the class of open sets $\Om\subset \R^N$ that satisfy the internal $\delta$-ball condition.
\end{definition}

\begin{theorem}\label{thm:nonexistballcond}
{Let $\alpha\in  (N-1,N)$. Then there exists $\delta_0\in(0,1)$ such that for any $\delta\in(0,\delta_0)$ there exists
  $\eps_{max}=\eps_{max}(\alpha,N,\delta)$ such that for $\eps\geq \eps_{max}$} both   problems 
\begin{equation}\label{eq:minnonexist}
\inf\left\{ E(\Om)+\eps V_\alpha(\Omega) : \Om\in \mathcal U(\delta),\; |\Om|=1\right\},
\end{equation}
and
\begin{equation}
\inf\left\{ \lambda_1(\Om)+\eps V_\alpha(\Omega) : \Om\in \mathcal U(\delta),\; |\Om|=1\right\},
\end{equation}
do  not admit a minimizer.
\end{theorem}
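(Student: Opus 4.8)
The plan is to show that, for $\eps$ large, the infimum $m_\eps$ of $\mathcal S+\eps V_\alpha$ over $\{\Omega\in\mathcal U(\delta):|\Omega|=1\}$ is not attained, treating $\mathcal S=E$ and $\mathcal S=\lambda_1$ simultaneously. Two elementary facts about $\mathcal U(\delta)$ are used: a set $\Omega\in\mathcal U(\delta)$ equals the union of the closed $\delta$-balls it contains (so if $|\Omega|=1$ it has at most $\lfloor|B_1|^{-1}\delta^{-N}\rfloor$ connected components and each of them contains a ball of radius $\delta$), and every ball of radius $\ge\delta$ lies in $\mathcal U(\delta)$, a class invariant under rigid motions, so any finite disjoint union of such balls is admissible. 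The first step is that a minimizer must be connected: if $\Omega=\Omega^{(1)}\cup\dots\cup\Omega^{(m)}$ with $m\ge2$, translating $\Omega^{(2)},\dots,\Omega^{(m)}$ to distance $t\to\infty$ along pairwise transverse directions produces admissible competitors $\Omega_t$ on which $\mathcal S$ is unchanged ($\lambda_1$ of a disjoint union is the least of the $\lambda_1$'s of the pieces, $E$ is their sum, and both are rigid-motion invariant), while $V_\alpha(\Omega_t)\to\sum_i V_\alpha(\Omega^{(i)})<V_\alpha(\Omega)$ because the mutual interaction terms are strictly positive and vanish in the limit; thus $\mathcal S(\Omega_t)+\eps V_\alpha(\Omega_t)<m_\eps$ for $t$ large, a contradiction.

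The second step quantifies the internal $\delta$-ball condition for connected competitors. If $\Omega\in\mathcal U(\delta)$ is connected with $|\Omega|=1$, let $D=\mathrm{diam}(\Omega)$ and project $\Omega$ onto a direction realizing $D$: the projection is an interval of length $\ge D$ covered by the $\delta$-length projections of the internal $\delta$-balls, so one can extract a $\delta$-separated family $y_1,\dots,y_M$ of their centres with $M\gtrsim D/\delta$, and then the half-balls $B_{\delta/2}(y_j)\subset\Omega$ are pairwise disjoint, whence $1=|\Omega|\ge M|B_1|(\delta/2)^N\gtrsim D\,\delta^{N-1}$, i.e. $\mathrm{diam}(\Omega)\le C_N\,\delta^{-(N-1)}$. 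Since $|x-y|^{-(N-\alpha)}\ge\mathrm{diam}(\Omega)^{-(N-\alpha)}$ for all $x,y\in\Omega$ (here $N-\alpha>0$ is crucial), integrating over $\Omega\times\Omega$ gives
\[
V_\alpha(\Omega)\ \ge\ \mathrm{diam}(\Omega)^{-(N-\alpha)}\,|\Omega|^2\ \ge\ c_{N,\alpha}\,\delta^{(N-1)(N-\alpha)}
\]
for every connected competitor. An upper bound for $m_\eps$ comes, thirdly, from testing with $k:=\lfloor|B_1|^{-1}\delta^{-N}\rfloor$ equal balls of radius $(|B_1|k)^{-1/N}\ge\delta$ sent infinitely far apart: using $\lambda_1(rB)=r^{-2}\lambda_1(B)$, $E(rB)=r^{N+2}E(B)$ and $V_\alpha(rB)=r^{N+\alpha}V_\alpha(B)$ one gets
\[
m_\eps\ \le\ C_1(N,\delta)\ +\ \eps\,\overline c_{N,\alpha}\,\delta^{\alpha},
\]
where $C_1$ does not depend on $\eps$ (one may take $C_1=\lambda_1(B_1)\delta^{-2}$ for both functionals) and $\overline c_{N,\alpha}\sim|B_1|^{-1}V_\alpha(B_1)$.

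Now the scaling competition closes the argument. For $\alpha>N-1$ one has $(N-1)(N-\alpha)<\alpha$ --- equivalently $\alpha-(N-1)(N-\alpha)=N(\alpha-N+1)>0$ --- so $\delta^{(N-1)(N-\alpha)}$ dominates $\delta^{\alpha}$ as $\delta\to0^+$; fix $\delta_0=\delta_0(N,\alpha)\in(0,1)$ so small that $c_{N,\alpha}\delta^{(N-1)(N-\alpha)}\ge 2\overline c_{N,\alpha}\delta^{\alpha}$ for all $\delta<\delta_0$ (and also that the unit-volume ball lies in $\mathcal U(\delta)$ and $k\ge2$). For such $\delta$, if a minimizer $\Omega$ existed it would be connected by Step 1, hence by Step 2 and the Faber-Krahn / Saint-Venant inequality $\mathcal S(\Omega)\ge\mathcal S(B)$ (with $|B|=1$) we would have $m_\eps\ge\mathcal S(B)+2\eps\,\overline c_{N,\alpha}\delta^{\alpha}$; combined with the upper bound this forces $\eps\,\overline c_{N,\alpha}\delta^{\alpha}\le C_1(N,\delta)-\mathcal S(B)$, i.e. $\eps$ below an explicit $\eps$-independent threshold, so taking $\eps_{max}=\eps_{max}(N,\alpha,\delta)$ just above that threshold yields the claim for both functionals. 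The main obstacle is the diameter bound of Step 2 --- making rigorous that the $\delta$-ball condition forbids a connected competitor from being longer and thinner than scale $\delta^{-(N-1)}$; everything else is a soft disconnection argument plus elementary scaling, and the hypothesis $\alpha\in(N-1,N)$ enters exactly (and only) through $N-\alpha>0$ and $(N-1)(N-\alpha)<\alpha$. Some routine bookkeeping is also needed to confirm that all the geometric operations preserve membership in $\mathcal U(\delta)$ and the volume constraint.
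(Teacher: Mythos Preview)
Your proof is correct and follows essentially the same strategy as the paper's: show that any minimizer must be connected, bound the diameter of a connected unit-volume competitor in $\mathcal U(\delta)$ by $C_N\delta^{1-N}$, turn this into the lower bound $V_\alpha(\Omega)\ge c_{N,\alpha}\delta^{(N-1)(N-\alpha)}$, compare with the $\delta^\alpha$ upper bound coming from many far-apart equal balls, and conclude via $(N-1)(N-\alpha)<\alpha$. Your projection/packing argument for the diameter bound is in fact more explicit than what the paper writes; otherwise the two proofs coincide.
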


\subsection{Outline of the proof and structure of the paper}

The proofs of the main results of the paper, Theorem \ref{thm:mainvero} and \ref{thm:mainvero2}, are articulated in two main steps, which we briefly describe here below. First we discuss  the proof of Theorem~\ref{thm:mainvero2}, which covers most of the paper. Then we describe a (completely independent) surgery argument for the functional $\la_1+\eps V_\alpha$. By putting together these two steps, Theorem~\ref{thm:mainvero} follows.

{\bf Strategy of the proof of Theorem~\ref{thm:mainvero2}.}
The proof of Theorem \ref{thm:mainvero2} is quite long and involved and  is inspired by ideas developed in \cite{brdeve,km}.

First of all, we consider a problem without the mass constraint. This step is needed because the techniques from the free boundary regularity that we aim to apply do not work properly in presence of a measure constraint as   perturbations become more difficult to manage.  
Whence  we consider an auxiliary minimization problem of the form
\begin{equation}\label{eq:minGeta}
\min\left\{ \mathcal G_{\eps,\eta}(\Om):=E(\Om)+\eps V_\alpha(\Om)+ f_\eta(|\Om|): \Om\subset B_R\right\},
\end{equation}
where $f_\eta$ is a suitable piecewise linear function which acts as a sort of Lagrange multiplier. This strategy in shape optimization problems was first proposed by Aguilera, Alt and Caffarelli in \cite{agalca}. We point out that without the equiboundedness restriction, at least as long as $\alpha<2$, minimizers of problem \eqref{eq:minGeta} do not exist (see Section \ref{setting}), and the infimum of $\mathcal G_{\eps,\eta}$ diverges toward minus infinity, which somewhat underlines one difficulty while trying to remove the equiboundedness of competitors in Theorem \ref{thm:mainvero2}.
Unfortunately the desired equivalence between~\eqref{eq:minGeta} and~\eqref{eq:mintor} is not straightforward, and we first need to show existence of minimizers of problem \eqref{eq:minGeta}, and some mild regularity (finiteness of the perimeter and density estimates). 
This permits us to show that for suitable values of $\eta$ (again depending on $R$), the minimization of $\mathcal G_{\eps,\eta}$ and the measure constrained minimization of $E+\eps V_\alpha$ are indeed equivalent, for $\eps$ small enough. 

The next key point is therefore to prove a suitable regularity result on the free boundary of an optimal set for~\eqref{eq:minGeta}. 
To get such a regularity we switch to the problem
\[
\min\left\{ \frac12 \int|\nabla u|^2-\int u +\varepsilon V_\alpha(|\{u>0\}|)+f_\eta(|\{u>0\}|) \,:\, u\in H^1_0(B_R)\,\right\},
\]
with the idea of exploiting the regularity theory for $\partial \{u>0\}\cap B_R$, where $u$ is any minimizer of the above problem. Such an analysis is done in the spirit of the seminal work on \emph{free boundary regularity} by Alt and Caffarelli~\cite{alca} . 

The link between this regularity argument and the rigidity of the ball is then 
the \emph{quantitative version} of the Saint-Venant inequality, stating that for any $\Omega\subset\R^N$ there exists a ball $B_r(x)$ of measure $|\Omega|$ such that
\[
|\Omega|^{-\frac{N}{N+2}}E(\Omega)- |B|^{-\frac{N}{N+2}}E(B)\ge \sigma_N \left(\frac{|\Omega\setminus B_r(x)|}{|\Omega|}\right)^2.
\]
This  deep result, recently shown in \cite{brdeve}, together with several of the ideas of its proof, plays a crucial role in our analysis. Indeed by comparing any candidate minimizer with a ball, we show  that for $\varepsilon$ small, minimizers are close in $L^1-$topology to the ball. Whence, exploiting the free boundary regularity analysis, such an $L^1-$ proximity to a ball is improved to a  \emph{nearly spherical} one, stating that the boundary of any minimizer is a small $C^{2,\gamma}-$parametrization on a sphere. At this point,  a perturbative analysis in the class of nearly spherical sets yields to the conclusion (again with the aid of the quantitative Saint-Venant inequality) that the ball is the only minimizer. Beside proving Theorem \ref{thm:mainvero2}, this argument, together with the Kohler-Jobin inequality is enough to get the statement of Theorem \ref{thm:mainvero} among equibounded sets. At this point we only need to show that any minimizing sequence can be chosen to be made up of equibounded sets. This, as mentioned above, is made by means of a surgery-wise argument.

{\bf The surgery argument.}
The strategy we follow is based on that proposed in~\cite{mp} (see also~\cite{buma}) in order to prove existence of minimizers under measure constraint for the $k-$th eigenvalue of the Dirichlet-Laplacian. Nevertheless some differences with respect to \cite{mp} occur.  On the one hand the presence of the repulsive Riesz energy term forces us to work with connected sets. On the other hand we  only deal with the first eigenvalue, thus we do not need to take care of the further difficulty about the orthogonality constraint of the higher eigenfunctions. Furthermore, up to choose $\eps$ small enough, we can deal with sets which are close to the ball in the $L^1-$topology which allows us to simplify the argument.

\medskip
{\bf {Plan of the paper.}}
The paper is organized as follows: in Section \ref{setting} we give the basic definitions and we prove or recall some preliminary results. In Sections \ref{exg}, \ref{MR}, \ref{sect:equivalence} and \ref{HR} we develop the proof of Theorem \ref{thm:mainvero2}, as described above. 
Section~\ref{sect:surgery} is devoted to a surgery argument for the functional $\la_1+\eps V_\alpha$.
 Finally, Sections \ref{maintheorem} and \ref{nonex} contain the proof of Theorems \ref{thm:mainvero} and \ref{thm:nonexistballcond}, respectively.

%\newpage

%\newpage

\section{Setting, notations and some preliminary results}\label{setting}
The ambient space  in this work is $\R^N$, where $N\ge2$ is an integer. With $\Omega$ we denote an open bounded set, unless otherwise stated. We  write $B_r(x)$ to indicate the ball with radius $r$ centered in $x$, and just $B_r$ if the center is $x=0$, while by $B$ we denote just  a generic ball, unless otherwise stated.  
Moreover we set $\omega_N$ the measure of the ball of unit radius in $\R^N$ and the  $N$-dimensional Lebesgue measure of a set $D$ is denoted by $|D|$.  
\subsection{The functionals: definitions and properties}
The problem we deal with is the minimization under volume constraint of the functional
\begin{equation}\label{eq:funct}
\mathcal F_{\alpha,\eps}(\Om):=E(\Om)+\eps  V_\alpha(\Om).
\end{equation}
where $E$ is the torsion energy 
\[
E(\Om):=\min_{u\in H^1_0(\Om)}\frac12\int_\Om|\nabla u|^2-\int_\Om u,
\]
and $ V_\alpha$ is the Riesz potential energy defined for $\alpha\in(0,N)$ as
\[
 V_\alpha(\Om):=\int_\Om\int_\Om \frac{1}{|x-y|^{N-\alpha}}\,dx\,dy.
\]
Some features of these two functionals are in order. First, we remark that  the minimum for the torsion energy functional is attained by a  function $w_\Om$, the torsion function, as long as $\Omega$ has finite measure. The Euler-Lagrange equation of the minimization problem defining $E$ reads as
\[-\Delta w_\Om =1\quad\text{ in $\Om$},\qquad w_\Om\in H^1_0(\Om).
\]
The definition of $E$ together with the equation satisfied by $w_\Om$ leads to the following representation of the torsion energy
\[
E(\Om)=-\frac12\int_\Om w_\Om\leq 0.
\]
By the P\'olya-Sz\"ego inequality (see \cite{PS}) it follows that \[ E(\Om)\ge E(B),
\]
where $B$ is a  ball of $\R^N$ of measure $|\Omega|$, that is: the torsion energy  $E$ is minimized by balls under volume constraint. Moreover the above inequality is \emph{rigid}, in the sense that
  equality holds if and only if $\Om$ is a ball, up to sets of null-capacity. This inequality is addressed as Saint-Venant inequality. The torsion energy $E$ satisfies the following scaling law:
\[
E(t\Om)=t^{N+2}E(\Om),\qquad \text{for all }t>0,
\]
and  it is non-increasing with respect to set inclusion, i.e.\[
\Om_1\subset \Om_2 \qquad \Longrightarrow\qquad E(\Om_1)\geq E(\Om_2),
\]
the inequality being strict as soon as $|\Om_2\setminus \Om_1|>0$.
Therefore we can rewrite the Saint-Venant inequality in the scale invariant form
\begin{equation}\label{eq:saintvenant}
E(\Om)|\Om|^{-\frac{N+2}{N}}\geq E(B)|B|^{-\frac{N+2}{N}}.
\end{equation}

\noindent
About the  Riesz energy functional $V_\alpha$, we note that it  scales as
\[
 V_\alpha(t\Om)=t^{N+\alpha} V_\alpha(\Om).
\]

Moreover, we recall that by Riesz inequality (see \cite[Theorem~3.7]{LL}) $ V_\alpha$ is \emph{maximized} by balls, that is, 
\[
 |\Omega|^{-\frac{N+\alpha}{N}}V_\alpha(\Om)\leq   |B|^{-\frac{N+\alpha}{N}}V_\alpha(B).
\]
Again the inequality is rigid, that is, equality holds if and only if  $\Om$ is a ball up to a negligible set. It is immediate to see that the Riesz potential energy is non-decreasing with respect to set inclusion, that is \[ \Om_1\subset \Om_2,\qquad \text{implies }\qquad  V_\alpha(\Om_1)\leq  V_\alpha(\Om_2),
\]
the inequality being strict if $|\Om_2\setminus \Om_1|>0$.
Alongside the Riesz energy we define the Riesz potential 
\[
v_\Om(x):=\int_\Om\frac{1}{|x-y|^{N-\alpha}}\,dy=\chi_\Om*\frac{1}{|\cdot|^{N-\alpha}}(x),
\]
so that 
\[
 V_{\alpha}(\Om)=\int_\Om v_\Om(x)\,dx.
\]
Notice that $v_\Om(0)$ satisfies
\begin{equation}\label{eq:vomega0}
v_{t\Om}(0)=\int_{t\Om}\frac{1}{|y|^{N-\alpha}}\,dy=t^{\alpha}\int_\Om\frac{1}{|z|^{N-\alpha}}\,dz=t^\alpha v_{\Om}(0).
\end{equation}
The following result, which is a simple refinement of~\cite[Proof of Proposition~2.1]{km}   will be used several times in the paper.
\begin{lemma}\label{le:knupfermuratov}
Let $\alpha\in (0,N)$, $\Om,F\subset \R^N$ be two measurable sets, with finite {measure}, such that $\Om\Delta F\subset B_R(0)$, for some $R>0$. Then it holds 
\begin{equation}\label{eq:knmuI}
 V_\alpha(F)- V_\alpha(\Om)\leq C_0|\Om\Delta F|\,\Big[|\Om|^{\frac{\alpha}{N}}+|F|^{\frac{\alpha}{N}}\Big],
\end{equation} 
for some constant $C_0=C_0(N,\alpha)>1$.
\end{lemma}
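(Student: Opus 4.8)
The plan is to expand $V_\alpha$ into its natural bilinear pieces along the partition of $\R^N$ induced by $\Om$ and $F$. Write $I(A,B):=\int_A\int_B|x-y|^{\alpha-N}\,dx\,dy$, so that $V_\alpha(D)=I(D,D)$ and $v_D(x)=\int_D|x-y|^{\alpha-N}\,dy$ satisfies $I(A,B)=\int_A v_B$. Splitting $F=(\Om\cap F)\sqcup(F\setminus\Om)$ and $\Om=(\Om\cap F)\sqcup(\Om\setminus F)$ and using bilinearity, the common piece $I(\Om\cap F,\Om\cap F)$ cancels and one gets
\[
\begin{aligned}
V_\alpha(F)-V_\alpha(\Om)&=2\big(I(\Om\cap F,F\setminus\Om)-I(\Om\cap F,\Om\setminus F)\big)\\
&\quad+I(F\setminus\Om,F\setminus\Om)-I(\Om\setminus F,\Om\setminus F).
\end{aligned}
\]
Since the kernel is nonnegative we have $I(\cdot,\cdot)\ge 0$, so we may discard the two terms carrying a minus sign; using moreover $\Om\cap F\subset F$ and $F\setminus\Om\subset F$, this yields the clean one‑sided bound
\[
V_\alpha(F)-V_\alpha(\Om)\le 2\,I(\Om\cap F,F\setminus\Om)+I(F\setminus\Om,F\setminus\Om)\le 3\,I(F,F\setminus\Om)=3\int_{F\setminus\Om}v_F(x)\,dx.
\]

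The second ingredient is a pointwise ($L^\infty$) estimate for the Riesz potential of a set of finite measure: for every measurable $D$ and every $x\in\R^N$,
\[
v_D(x)=\int_D|x-y|^{\alpha-N}\,dy\le\int_{B_r(x)}|x-y|^{\alpha-N}\,dy=\frac{N}{\alpha}\,\omega_N^{\frac{N-\alpha}{N}}\,|D|^{\frac{\alpha}{N}},
\]
where $B_r(x)$ is the ball centred at $x$ with $|B_r(x)|=|D|$. The inequality is the bathtub principle: $y\mapsto|x-y|^{\alpha-N}$ is radially symmetric and (strictly) decreasing about $x$, hence its integral over sets of fixed measure is maximised on the centred ball, and the last equality is an explicit computation in polar coordinates. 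Applying this with $D=F$, integrating over $F\setminus\Om$, and using $|F\setminus\Om|\le|\Om\Delta F|$ and $|F|^{\alpha/N}\le|\Om|^{\alpha/N}+|F|^{\alpha/N}$, one obtains \eqref{eq:knmuI} with $C_0:=1+3N\omega_N^{(N-\alpha)/N}/\alpha$, which depends only on $N$ and $\alpha$ and is $>1$.

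I would also note that the hypothesis $\Om\Delta F\subset B_R(0)$ plays no role in this one‑sided inequality (it becomes relevant only when a matching lower bound for $V_\alpha(F)-V_\alpha(\Om)$ is needed), and that running the same argument with the roles of $\Om$ and $F$ exchanged gives the reverse estimate with $|\Om|^{\alpha/N}$ in place of $|F|^{\alpha/N}$. There is essentially no obstacle here: the only points needing (minor) care are bookkeeping which bilinear terms are nonnegative after the decomposition, and invoking the rearrangement bound to control $\|v_F\|_{L^\infty}$ by $|F|^{\alpha/N}$.
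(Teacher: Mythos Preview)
Your proof is correct and follows essentially the same strategy as the paper: decompose $V_\alpha(F)-V_\alpha(\Om)$ bilinearly, drop nonpositive terms, and then control the surviving integrals via the rearrangement (bathtub) bound $v_D(x)\le \frac{N}{\alpha}\omega_N^{(N-\alpha)/N}|D|^{\alpha/N}$. The only cosmetic difference is the bookkeeping: the paper writes the symmetric identity
\[
V_\alpha(F)-V_\alpha(\Om)=\int_{F\setminus\Om}(v_F+v_\Om)-\int_{\Om\setminus F}(v_F+v_\Om)\le\int_{\Om\Delta F}(v_F+v_\Om),
\]
which yields the two terms $|\Om|^{\alpha/N}$ and $|F|^{\alpha/N}$ directly with constant $\int_B|z|^{\alpha-N}\,dz$, whereas you bound by $3\int_{F\setminus\Om}v_F$ and then pad with $|\Om|^{\alpha/N}$; your observation that the hypothesis $\Om\Delta F\subset B_R$ is not used for this one-sided estimate is also correct and matches the paper's argument.
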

\begin{proof}
%Let us set, for any couple of measurable sets $A$ and $G$
%\[
%V_\alpha(A,G):=\int_A\int_G\frac{1}{|x-y|^{N-\alpha}}\,dx\,dy=\int_Av_G(x)\,dx\geq 0.
%\]

{
First  we compute
\begin{equation}\label{eq:Iint}
\begin{split}
V_\alpha(F)- V_\alpha(\Om) &=\int_{\R^N}\int_{\R^N}\frac{\chi_F(x)(\chi_F(y)-\chi_\Omega(y))}{|x-y|^{N-\alpha}}\,dxdy+\int_{\R^N}\int_{\R^N}\frac{\chi_\Omega(y)(\chi_F(x)-\chi_\Omega(x))}{|x-y|^{N-\alpha}}\,dxdy\\
&=\int_{F\setminus \Omega}(v_F(x)+v_\Omega(x))\,dx-\int_{\Omega\setminus F}(v_F(x)+v_\Omega(x))\,dx\leq\int_{\Om\Delta F}(v_F+v_\Om).
\end{split}
\end{equation}
}
We can now observe that, as a consequence of Riesz inequality (see~\cite[Lemma~2.3]{fuprariesz}) and the rescaling of $v_\Om(0)$, see~\eqref{eq:vomega0}, we get
\[
\begin{split}
&{\int_{\Omega\Delta F}v_\Omega(x)\,dx=}\int_{\Om\Delta F}\int_\Om\frac{1}{|x-y|^{N-\alpha}}\,dy\,dx\\
&\leq \int_{\Om\Delta F}\int_{{\widetilde B(x)}}\frac{1}{|x-y|^{N-\alpha}}\,dy\,dx \\
&= |\Om\Delta F|\int_{{\widetilde B}}\frac{1}{|z|^{N-\alpha}}\,dz=|\Om\Delta F|\,|\Om|^{\frac{\alpha}{N}}\int_B\frac{1}{|z|^{N-\alpha}}\,dz,
\end{split}
\]
{where $\widetilde B(x)$ and $\widetilde B$ are balls of measure $|\widetilde B|=|\widetilde B(x)|=|\Omega|$ centered at $x$ and at the origin respectively, while $B$ is the ball of measure one centered at the origin}.
The same computation holds also for $\int_{\Om\Delta F}v_F\,dx$.
In conclusion we have \[
 V_\alpha(F)- V_\alpha(\Om)\leq \int_{\Om\Delta F}(v_F+v_\Om)\leq C_0|\Om\Delta F|\,\Big[|\Om|^{\frac{\alpha}{N}}+|F|^{\frac{\alpha}{N}}\Big],
\]
where $C_0(N,\alpha):=\int_B\frac{1}{|z|^{N-\alpha}}\,dz<+\infty$ as $\alpha>0$.
\end{proof}
We conclude this subsection recalling one of the main tool we  exploit to solve problem \eqref{eq:main}:  the sharp quantitative version of the Saint-Venant inequality, which was first proved as a intermediate result in~\cite[Proof of the Main Theorem]{brdeve}.%
\begin{theorem}\label{thm:quantitative}
There exists a constant $\sigma=\sigma(N)$, such that, for all open sets { with finite measure} $\Om\subset \R^N$, we have 
\begin{equation}\label{eq:quantsv}
E(\Om)|\Om|^{-1-\frac2N}-E(B)|B|^{-1-\frac2N}\geq \sigma \mathcal A(\Om)^2,
%\frac{E(\Om)-E(B_m)}{E(\Omega)}\geq \sigma \mathcal A(\Om)^2,
\end{equation}
for any ball $B$, 
where 
\begin{equation}\label{eq:fraenkel}
\mathcal A(\Omega):=\inf \left\{ \frac{|\Omega\Delta B(x)|}{|\Omega|}\,:\, x\in\R^N, \text{$B(x)$ is a ball of measure $|\Omega|$}\right\},
\end{equation}
is the Fraenkel asymmetry.
\end{theorem}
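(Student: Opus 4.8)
I would follow the selection-principle scheme of Brasco--De Philippis--Velichkov, whose point is to reduce the \emph{sharp} power $2$ to a second order (Fuglede-type) expansion of $E$ around the ball. By scaling we may assume $|\Om|=|B|$, and we write $D(\Om):=E(\Om)|\Om|^{-1-2/N}-E(B)|B|^{-1-2/N}\ge0$, so that~\eqref{eq:quantsv} becomes $D(\Om)\ge\sigma\mathcal A(\Om)^2$. When $\mathcal A(\Om)$ is bounded away from $0$ the inequality (with a worse constant) is a soft consequence of the compactness of $\gamma$-convergence together with the rigidity of the Saint-Venant inequality; hence it suffices to argue for $\mathcal A(\Om)$ small, which I would do by contradiction.

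Suppose there are open sets $\Om_k$ with $|\Om_k|=|B|$, $\eps_k:=\mathcal A(\Om_k)\to0$ and $D(\Om_k)=o(\eps_k^2)$; after a truncation we may assume $\Om_k\subset B_M$, losing only $o(\eps_k^2)$ in the energy. Fix $\Lambda$ large and for each $k$ let $U_k$ be a minimizer of
\[
\min\Big\{\,E(\Om)+\Lambda\big||\Om|-|B|\big|+\tfrac1{\eps_k}\big(\mathcal A(\Om)-\eps_k\big)^2\ :\ \Om\subset B_M\text{ open}\,\Big\}.
\]
Existence of $U_k$ follows from the direct method (all terms are lower semicontinuous for $\gamma$-convergence and $\{\Om\subset B_M\}$ is $\gamma$-compact), and for $\Lambda$ large a standard scaling competitor argument forces $|U_k|=|B|$. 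Testing with $\Om_k$ gives
\[
E(U_k)+\tfrac1{\eps_k}\big(\mathcal A(U_k)-\eps_k\big)^2\le E(\Om_k),
\]
whence $E(U_k)\le E(\Om_k)$, so $D(U_k)\le D(\Om_k)=o(\eps_k^2)$, and also $\big(\mathcal A(U_k)-\eps_k\big)^2\le\eps_k\big(E(\Om_k)-E(B)\big)=o(\eps_k^3)$, i.e.\ $\mathcal A(U_k)=\eps_k(1+o(1))\to0$.

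The technical core is the \emph{uniform} regularity of the $U_k$. The torsion function $w_{U_k}$ is a quasi-minimizer of a one-phase Alt--Caffarelli type functional $u\mapsto\int_{B_M}\big(\tfrac12|\nabla u|^2-u\big)+(\text{measure and asymmetry penalizations})$, the penalizations being mild bounded perturbations; the Alt--Caffarelli theory then yields, with constants independent of $k$, non-degeneracy and local Lipschitz bounds for $w_{U_k}$, uniform density estimates for $U_k$, and $C^{1,\gamma}$ regularity of $\partial U_k$ via flatness improvement, upgraded to $C^{2,\gamma}$ by elliptic regularity for $-\Delta w=1$. Since $\mathcal A(U_k)\to0$, up to translations $|U_k\Delta B|\to0$, and combined with the uniform regularity this forces $U_k\to B$ in $C^{1,\gamma}$; hence for $k$ large each $U_k$ is \emph{nearly spherical}, $\partial U_k=\{(1+\phi_k(\theta))\theta:\theta\in\mathbb S^{N-1}\}$ with $\|\phi_k\|_{C^1(\mathbb S^{N-1})}\to0$, and we normalize the barycenter of $U_k$ to be the origin.

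Finally I would prove, by expanding $t\mapsto E(\Om_{t\phi})$ to second order at $t=0$, the Fuglede-type estimate: there is $c=c(N)>0$ such that any nearly spherical $\Om_\phi$ with $|\Om_\phi|=|B|$, barycenter at the origin and $\|\phi\|_{C^1}$ small satisfies
\[
E(\Om_\phi)-E(B)\ \ge\ c\,\|\phi\|_{H^{1/2}(\mathbb S^{N-1})}^2\ \ge\ c\,c'\,\mathcal A(\Om_\phi)^2.
\]
The first variation vanishes because $|\nabla w_B|$ is constant on $\partial B$ (Serrin's overdetermined condition) and the volume is fixed; the second variation has the form $\phi\mapsto\int_{\mathbb S^{N-1}}\phi\,\Lambda_B[\phi]\,d\mathcal H^{N-1}+(\text{curvature terms})$ with $\Lambda_B$ the Dirichlet-to-Neumann map of $B$, which one diagonalizes in spherical harmonics and checks to be $H^{1/2}$-coercive once the modes $0$ and $1$ are removed --- and these are removed precisely by the volume and barycenter normalizations; the last inequality is the elementary $\mathcal A(\Om_\phi)\lesssim\|\phi\|_{L^1(\mathbb S^{N-1})}\lesssim\|\phi\|_{H^{1/2}(\mathbb S^{N-1})}$, with a uniform control of the remainder for $\|\phi\|_{C^1}$ small. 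Applying this to $U_k$ gives $D(U_k)=|B|^{-1-2/N}\big(E(U_k)-E(B)\big)\ge c''\,\mathcal A(U_k)^2=c''\,\eps_k^2(1+o(1))$, contradicting $D(U_k)=o(\eps_k^2)$. The real obstacle is the $k$-independent free boundary regularity of the penalized minimizers; the Fuglede expansion, although computationally long, is comparatively routine.
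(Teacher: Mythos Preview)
The paper does not prove this theorem; it is stated without proof and attributed to Brasco--De~Philippis--Velichkov~\cite{brdeve}, where it is established en route to the quantitative Faber--Krahn inequality. Your proof plan is a faithful high-level summary of precisely that argument: selection principle via a penalized problem, uniform Alt--Caffarelli regularity for the selected minimizers, reduction to nearly spherical sets, and a Fuglede-type second order expansion to reach the contradiction. So in substance you are reproducing the cited reference, not offering an alternative route.

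Two technical points you treat as throwaway remarks are in fact non-trivial in~\cite{brdeve}. First, the reduction to uniformly bounded competitors (``after a truncation we may assume $\Om_k\subset B_M$, losing only $o(\eps_k^2)$'') is a genuine surgery argument occupying its own section there, not a soft step. Second, penalizing with the raw Fraenkel asymmetry $\mathcal A$ is problematic: $\mathcal A$ is not continuous under $\gamma$-convergence (so your existence claim for $U_k$ is not immediate), and its first variation is ill-behaved at sets where the optimal ball is not unique, which interferes with the free-boundary analysis. In~\cite{brdeve} one works instead with a smoothed surrogate for the asymmetry whose Euler--Lagrange contribution is controllable. These are repairable, and your identification of the uniform free-boundary regularity as the real obstacle is exactly right; but the sketch as written underestimates the amount of care needed at those two places.
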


The last functional involved in our work is the first eigenvalue of the Dirichlet-Laplacian acting on an open and bounded set $\Om\subset \R^N$. We recall its variational definition given as the minimum of the so-called Rayleigh quotient:\[
\la_1(\Om):=\min_{\varphi\in H^1_0(\Om)}\frac{\int_{\Om}|\nabla \varphi|^2}{\int_{\Om}\varphi^2},
\]
we call $u\in H^1_0(\Om)$ the function attaining the minimum, which is the eigenfunction corresponding to $\la_1(\Om)$ and that solves the PDE\[
\begin{cases}
-\Delta u=\la_1(\Om)u,\qquad &\text{in }\Om,\\
u\in H^1_0(\Om).&
\end{cases}
\]
The monotonicity and scaling properties of the eigenvalue follow immediately from its definition:\[
\begin{split}
\la_1(t\Om)=t^{-2}\la_1(\Om),\qquad &\text{for all }t>0,\\
\Om_1\subset \Om_2\qquad \Longrightarrow&\qquad \la_1(\Om_1)\geq \la_1(\Om_2).
\end{split}
\]

We finally recall the sharp quantitative Faber-Krahn inequality for the first eigenvalue of the Dirichlet-Laplacian, that was first proved in~\cite[Main Theorem]{brdeve}.
\begin{theorem}\label{thm:quantitativefk}
There exists a positive constant $\widehat \sigma=\widehat \sigma(N)$ such that for all open set $\Om\subset\R^N$ with finite measure we have \[
|\Om|^{2/N}\la_1(\Om)-|B|^{2/N}\la_1(B)\geq \widehat \sigma \mathcal A(\Om)^2,
\]
where $B$ is a generic ball and $\mathcal A$ the Fraenkel asymmetry.
\end{theorem}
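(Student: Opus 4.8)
\textbf{Proof strategy for Theorem~\ref{thm:quantitativefk}.} The plan is to reduce the quantitative Faber--Krahn inequality to the quantitative Saint--Venant inequality of Theorem~\ref{thm:quantitative} by means of the Kohler--Jobin rearrangement technique, which allows one to transfer information from the torsion energy to the first eigenvalue. The Kohler--Jobin inequality states that among all sets with a prescribed torsion energy, the ball minimizes $\la_1$; more precisely, the scale invariant quantity $E(\Om)^{2/(N+2)}\la_1(\Om)$ is minimized by balls, with a rigidity statement. What we actually need is a \emph{stable} version: the Kohler--Jobin map $\Om\mapsto\Om^\star$ (or the associated rearrangement of the torsion function into a level-set construction built from the eigenfunction) does not increase the Fraenkel asymmetry by more than a dimensional factor, so that a set which is far from a ball is sent to a set which is still quantitatively far from a ball.

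\medskip

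First I would fix an open set $\Om\subset\R^N$ of finite measure and, by the scaling laws $\la_1(t\Om)=t^{-2}\la_1(\Om)$ and $E(t\Om)=t^{N+2}E(\Om)$ together with scale invariance of $\mathcal A$, normalize $|\Om|=|B|=\omega_N$, so the claimed inequality becomes $\la_1(\Om)-\la_1(B)\ge\widehat\sigma\,\mathcal A(\Om)^2$. If $\la_1(\Om)-\la_1(B)$ is larger than a fixed dimensional constant the inequality is trivial since $\mathcal A\le 2$, so I may assume $\Om$ is close to the ball in the eigenvalue sense. The core step is then: starting from $\Om$ with eigenfunction $u$, use the level sets of $u$ to build a set $\Omh$ with $\la_1(\Omh)$ controlled by $\la_1(\Om)$ and $E(\Omh)$ controlled from the Kohler--Jobin construction, such that $\la_1(\Om)-\la_1(B)$ small forces $E(\Omh)|\Omh|^{-1-2/N}-E(B)|B|^{-1-2/N}$ to be small, and crucially $\mathcal A(\Omh)\ge c_N\,\mathcal A(\Om)$ for a dimensional constant $c_N>0$. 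Applying Theorem~\ref{thm:quantitative} to $\Omh$ then yields
\[
\la_1(\Om)-\la_1(B)\ \ge\ c_N'\Big(E(\Omh)|\Omh|^{-1-\frac2N}-E(B)|B|^{-1-\frac2N}\Big)\ \ge\ c_N'\,\sigma\,\mathcal A(\Omh)^2\ \ge\ c_N'\,\sigma\,c_N^2\,\mathcal A(\Om)^2,
\]
which is the desired bound with $\widehat\sigma=c_N'\sigma c_N^2$.

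\medskip

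In more detail, the Kohler--Jobin construction proceeds as follows. Let $u\ge 0$ be the first eigenfunction of $\Om$, normalized suitably, and for $t>0$ set $\Om_t=\{u>t\}$. On each superlevel set one replaces $u$ by the solution of an ODE in the variable $\mu(t)=|\Om_t|$ chosen so that the Dirichlet energy is decreased while the torsion-type functional is controlled; equivalently one performs the classical substitution that turns the eigenfunction profile into a torsion-function profile on a rearranged domain $\Omh$ (a ball when $\Om$ is a ball). The inequalities $\la_1(\Omh)\le\la_1(\Om)$ and the comparison between $E(\Omh)$ and the data of $\Om$ are the content of the classical Kohler--Jobin theorem. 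The point I would add is the asymmetry transfer: the rearrangement acts radially on each spherical shell $\{t<u<t+dt\}$ after symmetrizing it, so the ``mass displaced'' from the ball is not created but only redistributed; a careful bookkeeping of $|\Om_t\Delta B_t|$ across levels, integrated against the ODE weights, shows $|\Omh\Delta B^\star|\ge c_N|\Om\Delta B^\star|$ where $B^\star$ is an optimal ball for $\Om$. This type of argument is exactly in the spirit of~\cite{brdeve}, where the quantitative Faber--Krahn is derived precisely by this route, so I would follow that blueprint.

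\medskip

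\textbf{Main obstacle.} The delicate point is the quantitative asymmetry transfer through the Kohler--Jobin rearrangement: the classical Kohler--Jobin inequality is proved by Schwarz symmetrization on level sets, and symmetrization in general destroys all information about asymmetry (it sends everything to a ball). One must therefore avoid full symmetrization and instead use a ``partial'' or ``mass-preserving'' rearrangement that is just enough to run the Kohler--Jobin comparison while retaining a fixed fraction of the Fraenkel asymmetry; making this rigorous, and checking that the ODE comparison still goes through with the weaker rearrangement, is the technical heart of the proof. A secondary difficulty is handling the regularity/measurability of the level sets $\Om_t$ for a generic finite-measure set $\Om$ (coarea formula, almost-every-level arguments), but this is standard. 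Once the transfer estimate $\mathcal A(\Omh)\ge c_N\mathcal A(\Om)$ is in hand, the conclusion is immediate from Theorem~\ref{thm:quantitative}.
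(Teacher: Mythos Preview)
The paper does not prove this theorem; it is simply quoted from \cite{brdeve}. Your overall idea---deduce the quantitative Faber--Krahn inequality from the quantitative Saint--Venant inequality via Kohler--Jobin---is exactly the strategy of \cite{brdeve}, and in fact the present paper carries out precisely this deduction in the proof of Proposition~\ref{thm:minla1BR}.

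However, you are making the argument far harder than it is. Your ``main obstacle'' is a phantom: there is no need to construct an auxiliary set $\Omh$ via a Kohler--Jobin rearrangement and then worry about transferring asymmetry from $\Om$ to $\Omh$. The Kohler--Jobin inequality \eqref{eq:KJ} is a pointwise inequality between $\la_1(\Om)$ and $E(\Om)$ for the \emph{same} set $\Om$, namely
\[
\frac{\la_1(\Om)}{\la_1(B)}\ge\left(\frac{-E(B)}{-E(\Om)}\right)^{2/(N+2)},\qquad |B|=|\Om|.
\]
Subtracting $1$ and using the elementary concavity bound $t^\vartheta-1\ge(2^\vartheta-1)(t-1)$ for $t\in[1,2]$, one gets (when $E(B)\ge 2E(\Om)$, the complementary case being trivial)
\[
\la_1(\Om)-\la_1(B)\ge c(N)\,\big(E(\Om)-E(B)\big)\ge c(N)\,\sigma\,\mathcal A(\Om)^2,
\]
the last step being Theorem~\ref{thm:quantitative} applied directly to $\Om$. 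This is the whole proof; the asymmetry never moves because the set never changes. Your proposed ``partial rearrangement preserving a fraction of the asymmetry'' is unnecessary, and in fact the standard Kohler--Jobin rearrangement does symmetrize completely---which is harmless here since it is only used to produce the scalar inequality \eqref{eq:KJ}, not a new competitor set.
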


\subsection{Quasi-open sets and the minimization problem}
Let us recall the notion of capacity and of quasi-open set.
\begin{definition}\label{def:cap}
For every subset $D$ of $\R^N$, the capacity of $D$ in 
$\R^N$ is defined as 
\begin{multline*}
\cp(D):=\inf\,\biggl\{
\int \left(|\nabla u|^2+|u|^2\right)\,dx:\,\, 
u\in H^1(\R^N)\,, 
\biggr. \\ \biggl.
\text{$0\leq u\leq 1$ $\mathcal L^N$-a.e. on $\R^N$,
$u = 1$ $\mathcal L^N$-a.e. on an open set containing $D$}\biggr\}\,.
\end{multline*}

We say that a property $\mathcal P(x)$ 
holds \emph{$\cp$-quasi-everywhere in $D$}, if it holds for 
all $x\in D$ except at most a set of zero capacity, and in this case we    write q.e. in $D$.
A subset $A$ of $\R^N$ is said to be \emph{quasi-open} if  
for every $\eps>0$  there exists an open subset $\omega_\eps$ 
of $\R^N$ such that $\cp(\omega_\eps) <\eps$ and 
$A\cup \omega_\eps$ is open. 
\end{definition}
The notion of capacity is strictly related to spectral functionals such as the torsion energy and the first eigenvalue of the Dirichlet-Laplacian.
In particular, one can not consider to be equivalent, a priori, two open (or quasi-open) sets which differ for a generic negligible set. Indeed for any open set $\Omega$ it is possible to construct a sequence of subsets $\Om_n\subset\Omega$ of measure $|\Omega_n|=|\Om|$ with $E(\Om_n)<1/n$ for all $n\in \N$.
For example take $\Om=(0,1)^N$ and let $\{r_i\}_{i\in \N}$ be an enumeration of the rationals in $(0,1)$. Then, as $\cp((0,1)^{N-1})>0$, it is possible to find $k_n$ so that \[
\Om_n=\Om\setminus \left\{(0,1)^{N-1}\times \bigcup_{i=1}^{k_n}r_i\right\},\qquad \text{with }E(\Om_n)\leq \frac1n,
\]
and $|\Om_n|=|\Om|$.

\begin{definition}
A function $u: \R^N\rightarrow \overline \R$ is said to be 
\emph{quasi-continuous} if for every $\eps>0$ 
there exists an open subset $\omega_\eps$ of $\R^N$ with 
$\cp(\omega_\eps)<\eps$ such that 
$u\bigl|_{\R^N\setminus \omega_\eps}$ is continuous.
\end{definition}
For every $u\in H^1(\R^N)$, there exists a 
Borel and quasi-continuous representative 
$\tilde{u}:\R^N\rightarrow\R$ of~$u$ and, if $\tilde{u}$ and 
$\hat{u}$ are two quasi-continuous representatives of 
the same function $u$, then we have $\tilde{u}=\hat{u}$ q.e. in~$\R^N$.
From now on for every $u\in H^1(\R^N)$,
we   consider only its quasi-continuous 
representative. In this setting, we are able to provide a more general definition of the space $H^1_0(\Om)$, which coincide with the usual one as soon as $\Om$ is open, but that is suitable also for measurable sets (and quasi-open sets in particular).
\begin{definition}\label{H10}
If $A$ is a quasi-open subset of $\R^N$, we set
\[
H^1_0(A):=\left\{u\in H^1(\R^N) :\,\,
\text{$u=0$ q.e. in $\R^N\setminus A$}\right\}\,.
\]
\end{definition}

%\\
%QUASI APERTI
%\\

It is nowadays standard to perform the minimization of functionals such as $\mathcal F_{\alpha,\eps}$ in the class of quasi-open sets. 
As it can be noted, in the definition of the torsion energy and of the first eigenvalue of the Dirichlet-Laplacian, only the space $H^1_0(\Om)$ was really needed and therefore, once we have a definition which is suitable for quasi-open sets, we can work with them with no additional worries. On the other hand, the Riesz energy is well defined even for measurable sets, therefore there are no problems on its side. 

As it is common in the Calculus of Variations, after finding a minimizer in the larger class of quasi-open sets, we will try later to restore the regularity of minimizers (and in particular, show that they are open). 

We are now in position to properly define the problem we deal with in a large part of this paper. Let $R>\left(\frac{1}{\omega_N}\right)^{1/N}$, so that a ball of radius $R$ has measure greater than $1$. Then we consider the problem
\begin{equation}\label{eq:main}
\min\left\{\mathcal F_{\alpha,\eps}(A) : A\subset \R^N,\; \text{quasi-open},\; |A|=1,\, A\subset B_R\right\}.
\end{equation} 
From now on, we tacitly deal with quasi-open sets, unless otherwise stated.

\subsection{Some notions of geometric measure theory}
We give here some measure theoretic notions which will be used throughout the paper. Comprehensive references for this section are \cite{afp,maggi}. The \emph{measure theoretic} perimeter (or De Giorgi perimeter) of a measurable set $E$ is the quantity
\[
P(E)=\sup\left\{\int_E\rm{div}\zeta\, :\,  \zeta\in C^1_c(\R^N,\R^N),\,\|\zeta\|_{C^0}\le1 \right\}.
\]
We say that $E$ is a \emph{set of finite perimeter} or \emph{Caccioppoli set} if $P(E)<+\infty$, that is if $\chi_E$ is a function of bounded variation \cite{afp}, and with $\nabla \chi_E$ we indicate the distributional derivative of $\chi_E$. Notice that if $E$ is Lipschitz regular, by divergence theorem,
\[
P(E)=\mathcal H^{N-1}(\partial E),
\]
where $\mathcal H^{k}$ stands for the $k-$dimensional Hausdorff measure, $k\in[0,N]$.

For any Lebesgue  measurable set $E$ and $t\in[0,1]$ we define the quantities
\[
E^{t}=\left\{ x\in\R^N\,:\, \limsup_{r\to0} \frac{|E\cap B_r(x)|}{|B_r(x)|}=t\,\right\},
\]
and the \emph{essential boundary of $E$} as 
\[
\partial^M E:=\R^N\setminus(E^{0}\cup E^1).
\]
Beside the essential boundary we call \emph{reduced boundary} the set
\[
\partial^*E:=\left\{  x\in\R^N\,:\, \nu_E(x):=\lim_{r\to0}\frac{\int_{B_r(x)}\nabla \chi_E}{\int_{B_r(x)}|\nabla \chi_E|}\,\text{ exists and is a unit vector }  \right\}.
\]
The quantity $\nu_E(x)$ in the definition of $\partial^*E$ is the measure theoretic normal of $\partial E$ at the point $x$, whenever it is well defined.
By results of Federer and De Giorgi \cite{maggi} for sets of finite perimeter it holds
\[
P(E)=\mathcal H^{N-1}(\partial^*E)=\mathcal H^{N-1}(\partial^ME).
\]
In particular for a set of finite perimeter we have  $\partial^*E\subset E^{1/2}\subset \partial^ME$ and  $\mathcal H^{N-1}(\partial^ME\setminus\partial^*E)=0$. Eventually, for any $x\in\partial^*E$ the blow up of the boundary of $E$ converges  in $L^1$ to an hyperplane orthogonal to $\nu_E(x)$, that is
\[
\frac{E-x}{r}\to\left\{y\in\R^N\,:\, y\cdot  \nu_E(x)\ge0\right\}
\] 
as $r\to0$.

%\newpage
\section{An existence result for an auxiliary problem}\label{exg}

Let $\eta\in(0,1)$ and consider the function \[
f_\eta\colon \R^+\rightarrow \R,\qquad f_\eta(s)=
\begin{cases}
\eta(s-1),\qquad \text{if }s\leq 1,\\
\frac{1}{\eta}(s-1),\qquad \text{if }s\geq 1.
\end{cases}
\]
It is easy to check that, for all $0\leq s_2\leq s_1,$ we have that
\begin{equation}\label{eq:propfeta}
\eta(s_1-s_2)\leq f_\eta(s_1)-f_\eta(s_2)\leq \frac{1}{\eta}(s_1-s_2).
\end{equation}
We introduce then the  functional
 \[
\mathcal G_{\eps,\eta}(\Om):=\mathcal F_{\alpha,\eps}(\Om)+f_\eta(|\Om|),
\]
and, for $R>\omega_N^{-1/N}$, the  minimization problem
\begin{equation}\label{eq:minGeta1}
\min\left\{\mathcal G_{\eps,\eta}(\Om) : \Om\subset B_R,\text{$\Om$ quasi-open}\right\}.
\end{equation}
In Section \ref{sect:equivalence} we will show that such minimization problem and the  minimization problem \eqref{eq:main} are equivalent. To do that we first have to prove   existence and some mild regularity of minimizers of $\G$. We begin by showing a lower bound for $\mathcal G_{\eps,\eta}$ on equibounded sets.
\begin{lemma}\label{le:boundbelowR}
Let $\alpha\in (0,N)$, $R>\omega_N^{-1/N}$ and $\eta,\eps\in (0,1)$.
Then, for all quasi-open $\Om\subset B_R$, we have \[
\mathcal G_{\eps,\eta}(\Om)\geq \omega_N^{\frac{N+2}{N}} E(B)R^{N+2}-\eta\geq \omega_N^{\frac{N+2}{N}} E(B)R^{N+2}-1,
\]
where $B$ is any ball of measure $1$.
\end{lemma}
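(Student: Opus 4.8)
The idea is simply to bound from below each of the three terms in $\mathcal G_{\eps,\eta}(\Om)=E(\Om)+\eps V_\alpha(\Om)+f_\eta(|\Om|)$ separately, using only monotonicity/scaling of $E$, positivity of $V_\alpha$, and the explicit form of $f_\eta$.

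First I would treat the torsion term. Since $\Om\subset B_R$, the inclusion $H^1_0(\Om)\subset H^1_0(B_R)$ (valid for quasi-open sets by Definition~\ref{H10}) together with the variational characterization of $E$ gives the monotonicity $E(\Om)\ge E(B_R)$. Then, writing the ball $B$ of unit measure as a rescaling of $B_R$, i.e.\ using $|B_R|=\omega_N R^N$ and the scaling law $E(tB)=t^{N+2}E(B)$, one computes $E(B_R)=\omega_N^{\frac{N+2}{N}}E(B)\,R^{N+2}$ (equivalently: the ball of measure $1$ has radius $\omega_N^{-1/N}$, so $E(B)=\omega_N^{-\frac{N+2}{N}}E(B_1)$ with $B_1$ the unit-radius ball, and $E(B_R)=R^{N+2}E(B_1)$). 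Hence $E(\Om)\ge \omega_N^{\frac{N+2}{N}}E(B)R^{N+2}$.

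Next, $V_\alpha(\Om)\ge0$ trivially since the integrand $|x-y|^{-(N-\alpha)}$ is nonnegative, so $\eps V_\alpha(\Om)\ge0$. For the last term, note that for $s\ge0$ one has $f_\eta(s)\ge\eta(s-1)\ge-\eta$: indeed for $s\le1$ this is an equality at $s=0$ and $f_\eta$ is increasing there, while for $s\ge1$ one has $f_\eta(s)=\tfrac1\eta(s-1)\ge0\ge-\eta$. Applying this with $s=|\Om|$ gives $f_\eta(|\Om|)\ge-\eta$. Summing the three bounds yields $\mathcal G_{\eps,\eta}(\Om)\ge \omega_N^{\frac{N+2}{N}}E(B)R^{N+2}-\eta$, and since $\eta\le1$ we get $-\eta\ge-1$, which is the second inequality.

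There is essentially no genuine obstacle here: the only point requiring a small amount of care is the scaling bookkeeping identifying $E(B_R)$ with $\omega_N^{\frac{N+2}{N}}E(B)R^{N+2}$ for $B$ of unit measure, and the (equally elementary) verification that $\inf_{s\ge0}f_\eta(s)=-\eta$. Note the resulting lower bound is negative (as $E(B)<0$), which is exactly what is needed: it shows $\mathcal G_{\eps,\eta}$ is bounded below on equibounded competitors, the starting point for the direct method in the next results.
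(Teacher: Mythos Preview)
Your proof is correct and follows essentially the same approach as the paper: bound $E(\Om)$ below by $E(B_R)=\omega_N^{\frac{N+2}{N}}E(B)R^{N+2}$ via monotonicity and scaling, drop the nonnegative Riesz term, and use $f_\eta(s)\ge-\eta$. The only difference is that you spell out the scaling identification and the verification $\inf_{s\ge0}f_\eta(s)=-\eta$ in slightly more detail than the paper does.
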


\begin{proof}
Since $\Omega\subset B_R$, by the monotonicity of $E$, its scaling properties and the positivity of $ V_\alpha$ we get 
\[
E(\Om)+\eps V_\alpha(\Om)\ge \omega_N^{\frac{N+2}{N}} E(B)R^{N+2}.
\]
On the other hand, if $|\Om|\geq 1$ then
 \[
f_\eta(|\Om|)\geq 0,
\]
while if $|\Om|<1$ then \[
f_\eta(|\Om|)=\eta(|\Om|-1)\geq -\eta,
\]
and the conclusion easily follows.
\end{proof}
%
%\subsection{An existence result for the unconstrained functional}

The following existence result for the unconstrained functional $\mathcal G_{\eps,\eta}$ is an adaptation of~\cite[Lemma~4.6]{brdeve}, which is in turn inspired by \cite[Theorem~2.2 and Lemma~2.3]{bu}. 

\begin{lemma}\label{le:existminG}
Let $\alpha\in(0,N)$, $\eta\in(0,1)$,  $\eps\in(0,1)$  and let  $R>\omega_N^{-1/N}$.
There exists a minimizer in the class of quasi-open sets for problem~\eqref{eq:minGeta1}.
Moreover all minimizers have perimeter uniformly bounded  by a constant depending  on  $N,R,\eta$.
\end{lemma}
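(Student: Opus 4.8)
The plan is to run the direct method of the Calculus of Variations, working with quasi-open subsets of the fixed ball $B_R$ and exploiting compactness in an $L^1$/$\gamma$-convergence sense. First I would take a minimizing sequence $\Omega_n\subset B_R$ for problem~\eqref{eq:minGeta1}; by Lemma~\ref{le:boundbelowR} the infimum is finite, so $\mathcal G_{\eps,\eta}(\Omega_n)$ is bounded above, and since $E(\Omega_n)\ge \omega_N^{\frac{N+2}{N}}E(B)R^{N+2}$ and $f_\eta(|\Omega_n|)\ge -\eta$, the positive Riesz term $\eps V_\alpha(\Omega_n)$ is also bounded. The key compactness input is that the torsion functions $w_{\Omega_n}$, which solve $-\Delta w_{\Omega_n}=1$ in $\Omega_n\subset B_R$, are uniformly bounded in $H^1_0(B_R)$ (indeed $\int|\nabla w_{\Omega_n}|^2=\int w_{\Omega_n}=-2E(\Omega_n)$ is bounded); hence up to a subsequence $w_{\Omega_n}\rightharpoonup w$ weakly in $H^1_0(B_R)$ and strongly in $L^2$, and one sets $\Omega:=\{w>0\}$, a quasi-open subset of $B_R$. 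Standard $\gamma$-convergence arguments (as in~\cite{bu,brdeve}) give $w=w_\Omega$ and $E(\Omega)\le \liminf_n E(\Omega_n)$. Simultaneously $\chi_{\Omega_n}\to\chi_\Omega$ in $L^1$ (this is where the density estimates / the lower semicontinuity machinery enter, or alternatively one extracts $L^1$-convergence of $\chi_{\{w_{\Omega_n}>0\}}$ from the strong convergence of $w_{\Omega_n}$ together with nondegeneracy), so $|\Omega_n|\to|\Omega|$, giving continuity of $f_\eta(|\cdot|)$, and $V_\alpha(\Omega_n)\to V_\alpha(\Omega)$ by dominated convergence since the kernel is integrable on $B_R\times B_R$ for $\alpha>0$. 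Combining, $\mathcal G_{\eps,\eta}(\Omega)\le\liminf_n\mathcal G_{\eps,\eta}(\Omega_n)$, so $\Omega$ is a minimizer.

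For the perimeter bound, the plan is to use the minimality of $\Omega$ against inner competitors obtained by intersecting with balls, together with a De Giorgi–type comparison. The point is that $\Omega$ is an \emph{inward minimizer} up to the lower-order terms: for any measurable $\omega\subset\Omega$ one has $E(\omega)\ge E(\Omega)$ and $V_\alpha(\omega)\le V_\alpha(\Omega)$, so minimality forces a control on $f_\eta(|\Omega|)-f_\eta(|\omega|)$, hence on $|\Omega\setminus\omega|$, and conversely a suitable comparison with the competitor $\Omega\cup B_r(x)$ yields the density estimate $|B_r(x)\cap\Omega|\ge c\,r^N$ or $|B_r(x)\setminus\Omega|\ge c\,r^N$ at boundary points, after absorbing the Riesz increment via Lemma~\ref{le:knupfermuratov} and the $f_\eta$ increment via~\eqref{eq:propfeta}. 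One then invokes the result of~\cite[Lemma~4.6]{brdeve} (or~\cite[Theorem~2.2, Lemma~2.3]{bu}), whose proof shows precisely that a quasi-open set enjoying such almost-inward-minimality and density estimates inside $B_R$ has finite perimeter with an explicit bound in terms of $N$, $R$, and the parameter governing the lower-order perturbation, here $\eta$ (the dependence on $\eta$ comes through the Lipschitz constant $1/\eta$ of $f_\eta$, which quantifies how much the volume term can fight the torsion energy). Since $\eps<1$ and the Riesz increments are themselves controlled by $|\Omega\Delta F|$ times a constant depending only on $N,\alpha,R$, they do not spoil the bound.

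The main obstacle I anticipate is the lower semicontinuity of $E$ under $\gamma$-convergence combined with getting genuine $L^1$-convergence of the characteristic functions (not merely of the torsion functions): a priori $|\{w>0\}|$ could be strictly smaller than $\liminf|\Omega_n|$, which would break the continuity of $f_\eta(|\cdot|)$ and of $V_\alpha$. This is exactly the subtlety addressed in~\cite{bu} via the regularity/nondegeneracy of inward-minimizing sets, and the clean way around it is to first establish the density estimates for the limiting set (or uniformly along the sequence) and then deduce that the convergence $w_{\Omega_n}\to w_\Omega$ upgrades to $\chi_{\Omega_n}\to\chi_\Omega$ in $L^1(B_R)$; this circularity is resolved by noting that the lower-order terms $\eps V_\alpha+f_\eta$ are Lipschitz with respect to $|\Omega\Delta F|$ on subsets of $B_R$, so the argument of~\cite[Lemma~4.6]{brdeve} applies essentially verbatim, with constants tracked to depend only on $N$, $R$, $\eta$ (and harmlessly on $\alpha$ through $C_0$).
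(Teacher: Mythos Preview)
Your overall architecture is right, and you have correctly put your finger on the real difficulty: upgrading weak $H^1$ convergence of the torsion functions to $L^1$ convergence of the characteristic functions, without which neither $f_\eta(|\cdot|)$ nor $V_\alpha$ passes to the limit. However, your resolution of this difficulty is not a resolution. You write that the circularity ``is resolved by noting that the lower-order terms are Lipschitz \dots\ so the argument of~\cite[Lemma~4.6]{brdeve} applies essentially verbatim''. But that lemma is exactly the statement you are proving, and its proof does \emph{not} proceed by first passing to the limit and then establishing density estimates a~posteriori; it proceeds by modifying the minimizing sequence itself so that $L^1$ compactness is available \emph{before} any limit is taken. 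Your outline never produces such a modified sequence, so the existence part has a genuine gap.

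Concretely, the missing idea is a truncation/coarea trick. Given a minimizing sequence $\Omega_n$ with torsion functions $u_n$, compare $\Omega_n$ with its own superlevel set $\{u_n>t_n\}$ for $t_n\to 0$ (here the inward nature of the perturbation, together with the monotonicity of $V_\alpha$ and~\eqref{eq:propfeta}, kills the lower-order terms). This yields a bound of the form
\[
\int_{\{0<u_n<t_n\}}|\nabla u_n|^2 + \eta\,|\{0<u_n<t_n\}| \le C\,t_n,
\]
and by coarea one finds a level $s_n\in(0,t_n)$ for which $W_n:=\{u_n>s_n\}$ has perimeter bounded by $C(N,R,\eta)$. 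The sequence $(W_n)$ is still minimizing, now with equibounded perimeter, so $W_n\to W_\infty$ in $L^1$; simultaneously the torsion functions of $W_n$ converge weakly in $H^1$ to some $w$, and one sets $W:=\{w>0\}$. A priori $W\subset W_\infty$ only up to a null set, and the paper closes this gap by a short minimality argument showing $|W_\infty\setminus W|=0$. The perimeter bound for the minimizer then comes for free by lower semicontinuity of $P$ along $(W_n)$, rather than from density estimates proved after the fact.

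In short: your instinct to establish regularity in order to get compactness is correct, but the regularity must be installed along the minimizing sequence via the truncation argument, not deduced for the limiting object. Once you insert that step, the rest of your plan (lower semicontinuity of $E$, continuity of $V_\alpha$ and $f_\eta$ under $L^1$ convergence) goes through as you describe.
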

\begin{proof}
Let $(\Om_n)\subset B_R$ be a minimizing sequence, with \[
\mathcal G_{\eps,\eta}(\Om_n)\leq \inf\left\{\mathcal G_{\eps,\eta}(\Om) : \Om\subset B_R,\; \text{quasi-open}\right\}+\frac{1}{n}.
\] 
Let $u_n$ be the torsion function of $\Om_n$, so that $\Om_n=\{u_n>0\}$ and
let $t_n=1/\sqrt{n}$. We define \[
\widetilde \Om_n:=\{u_n>t_n\}.
\]  
We have \[
\mathcal G_{\eps,\eta}(\Om_n)\leq \mathcal G_{\eps,\eta}(\widetilde \Om_n)+\frac{1}{n},
\]
which, since the torsion function of $\widetilde \Om_n$ is precisely $(u_n-t_n)_+$, reads as
 \[
\begin{split}
&\frac12\int_{\{u_n>0\}}|\nabla u_n|^2-\int_{\{u_n>0\}}u_n+\eps V_\alpha(\Om_n)+f_\eta(|\{u_n>0\}|)\\
&\leq \frac12\int_{\{u_n>t_n\}}|\nabla u_n|^2-\int_{\{u_n>t_n\}}(u_n-t_n)_++\eps V_\alpha(\widetilde \Om_n)+f_\eta(|\{u_n>t_n\}|)+\frac{1}{n}.
\end{split}
\]
Noting that 
\begin{equation}\label{eq:star}
\int_{\{u_n>0\}}u_n-\int_{\{u_n>t_n\}}(u_n-t_n)\leq t_n|\{u_n>0\}|,
\end{equation}
recalling the property~\eqref{eq:propfeta} of $f_\eta$ and the monotonicity of $ V_\alpha$,  the above inequality yields 
\begin{equation}\label{eq:e1}
\frac12\int_{\{0<u_n<t_n\}}|\nabla u_n|^2+\frac{\eta}{2}|\{0<u_n<t_n\}|\leq t_n|\{u_n>0\}|+\frac1n\leq t_n|B_R|+\frac1n.
\end{equation}
On the other hand, since $\eta<1$, using coarea formula, the arithmetic geometric mean inequality and~\eqref{eq:e1}, we obtain\[
\begin{split}
&\eta\int_{0}^{t_n}P(\{u_n>s\})\,ds=\eta\int_{\{0<u_n<t_n\}}|\nabla u_n|\,dx\\
&\leq \frac{\eta}{2}\int_{\{0<u_n<t_n\}}|\nabla u_n|^2\,dx+\frac{\eta}{2}|\{0<u_n<t_n\}|\leq t_n|B_R|+\frac{1}{n}.
\end{split}
\]
Thanks to the choice of $t_n=1/\sqrt{n}$, we can find a level $0<s_n<1/\sqrt{n}$ such that the sets $W_n:=\{u_n>s_n\}$ satisfy\[
P(W_n)\leq \frac{2\eta}{\eta t_n}\int_0^{t_n}P(\{u_n>s\})\,ds\leq \frac{2|B_R|}{\eta}+\frac{2}{\eta t_n n}\leq C(N,R,\eta)+\frac{2}{\eta\sqrt{n}}.
\]
It is easy to check that $(W_n)$ is still a  minimizing sequence for problem~\eqref{eq:minGeta1}:
\begin{equation}\label{eq:e2}
\begin{split}
&\mathcal G_{\eps,\eta}(W_n)\\
&=\frac12\int_{\{u_n>s_n\}}|\nabla u_n|^2-\int_{\{u_n>s_n\}}(u_n-s_n)+\eps V_\alpha(\{u_n>s_n\})+f_\eta(|\{u_n>s_n\}|)\\
&\leq \mathcal G_{\eps,\eta}(\Om_n)+s_n|\{u_n>0\}|+f_\eta(|\{u_n>s_n\}|)-f_\eta(|\{u_n>0\}|)\\
&\leq \mathcal G_{\eps,\eta}(\Om_n)+\frac{|B_R|}{\sqrt{n}}-\eta|\{0<u_n<s_n\}|\leq \mathcal G_{\eps,\eta}(\Om_n)+\frac{|B_R|}{\sqrt{n}},
\end{split}
\end{equation}
where we have also used the monotonicity of $ V_\alpha$ and property~\eqref{eq:star} with $s_n$ in place of $t_n$. 
Moreover, since the sets of the sequence $(W_n)_{n\in\mathbb N}$ have equibounded perimeter, there exists a Borel set $W_\infty$ such that (up to pass to subsequences) \[
W_n\rightarrow W_\infty,\text{ in }L^1,\qquad P(W_\infty)\leq C(N,R,\eta).
\]
On the other hand, the torsion function of $W_n$, that is $w_n=(u_n-s_n)_+$, is equibounded in $H^1(B_R)$. In fact, by Lemma~\ref{le:boundbelowR}, $\mathcal G_{\eps,\eta}$ is (uniformly) bounded from below and so\[
C(N,R)\leq \mathcal G_{\eps,\eta}(W_n)=-\frac12\int_{W_n}|\nabla w_n|^2+\eps V_\alpha(W_n)+f_\eta(|W_n|),
\]
which implies, \[
\frac12\int_{W_n}|\nabla w_n|^2\leq -C(N,R)+\eps V_\alpha(B_R)+\frac{1}{\eta}|B_R|.
\]
Hence, up to subsequences, there is $w\in H^1_0(B_R)$ such that \[
w_n\rightarrow w,\qquad \text{strongly in }L^2(B_R)\text{ and weakly in }H^1_0(B_R).
\]
We set $W:=\{w>0\}$, and recall that we are identifying $w$ with its quasi-continuous representative. Thus \[
\chi_W(x)\leq \liminf_{n\rightarrow \infty}\chi_{W_n}(x)=\chi_{W_\infty}(x),\qquad \text{ for a.e. }x\in B_R,
\]
hence $|W\setminus W_\infty|=0$, that is $W\subset W_\infty$ up to a negligible set.
We now observe that $ V_\alpha$ and $f_\eta$ are continuous with respect to the $L^1$ convergence of sets, while the first integral in the torsion energy is lower semicontinuous with respect to the weak $H^1$ and the second one with respect to the  strong $L^1$ convergence. We can therefore pass to the limit in~\eqref{eq:e2} and obtain \[
\begin{split}
&E(W)+\eps  V_\alpha(W_\infty)+f_\eta(|W_\infty|)\leq \frac12\int_{B_R}|\nabla w|^2-\int_{B_R}w+\eps V_\alpha(W_\infty)+f_\eta(|W_\infty|)\\
&\leq \liminf_{n}\mathcal G_{\eps,\eta}(W_n)=\inf_{\Om\subset B_R}\mathcal G_{\eps,\eta}(\Om)\leq E(W)+\eps V_\alpha(W)+f_\eta(|W|).
\end{split}
\] 
On the other hand, using again the monotonicity of $ V_\alpha$, we have \[
\eta|W_\infty\setminus W|=\eta(|W_\infty|-|W|)\leq f_\eta(|W_\infty|)-f_\eta(|W|)\leq \eps( V_\alpha(W)- V_\alpha(W_\infty))\leq 0,
\]
thus $|W_\infty\setminus W|=0$, which entails $W=W_\infty$ a.e. and this is the desired minimizer for problem~\eqref{eq:minGeta1}.
\end{proof}

\noindent We conclude this section with a  result concerning a property of the minimizers of $\G$ which will be useful later.
\begin{lemma}\label{le:infneg}
Let $R>\omega_N^{-1/N}$, $\alpha\in(0,N)$ and $B$ a ball of measure $1$.
There exist  a constants  $\eps_0=\eps_0(N,\alpha)>0$ and $\eta_0=\eta_0(N,\alpha)>$  such that, if $\eta\leq \eta_0$ and $\eps\leq \eps_0$, then for any minimizer $\widehat \Om$   of problem~\eqref{eq:minGeta1} we have \[
E(\widehat \Om)\leq \frac{E(B)}{4}<0.
\] 
\end{lemma}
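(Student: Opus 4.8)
The plan is to compare an arbitrary minimizer $\widehat\Om$ of problem~\eqref{eq:minGeta1} with an explicit competitor, namely a suitably rescaled ball, and show that the comparison forces $E(\widehat\Om)$ to be very negative once $\eps$ and $\eta$ are small. First I would fix $B$ to be the centered ball of measure $1$, which by assumption satisfies $B\subset B_R$, and more generally note that $t B\subset B_R$ for all $t\le 1$. Using the scaling laws recalled in Section~\ref{setting}, for $t\in(0,1]$ we have
\[
\mathcal G_{\eps,\eta}(tB)=t^{N+2}E(B)+\eps t^{N+\alpha}V_\alpha(B)+f_\eta(t^N-1).
\]
Since $t\le1$ we have $f_\eta(t^N-1)=\eta(t^N-1)\in[-\eta,0]$, and $\eps t^{N+\alpha}V_\alpha(B)\le \eps V_\alpha(B)$. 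Hence
\[
\mathcal G_{\eps,\eta}(tB)\le t^{N+2}E(B)+\eps V_\alpha(B).
\]
Recalling $E(B)<0$, the right-hand side is minimized over $t\in(0,1]$ by taking $t=1$ (the smallest negative value of $t^{N+2}E(B)$ is at $t=1$); so in fact the cleanest choice is simply $t=1$, giving $\mathcal G_{\eps,\eta}(B)=E(B)+\eps V_\alpha(B)$. By minimality of $\widehat\Om$,
\[
\mathcal G_{\eps,\eta}(\widehat\Om)\le \mathcal G_{\eps,\eta}(B)= E(B)+\eps V_\alpha(B).
\]

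Next I would turn this into a bound on $E(\widehat\Om)$ alone by absorbing the other two terms of $\mathcal G_{\eps,\eta}(\widehat\Om)=E(\widehat\Om)+\eps V_\alpha(\widehat\Om)+f_\eta(|\widehat\Om|)$. The term $\eps V_\alpha(\widehat\Om)$ is nonnegative, so it only helps: $E(\widehat\Om)+f_\eta(|\widehat\Om|)\le E(B)+\eps V_\alpha(B)$. For $f_\eta(|\widehat\Om|)$ I distinguish cases. If $|\widehat\Om|\ge 1$ then $f_\eta(|\widehat\Om|)\ge0$ and we are done immediately: $E(\widehat\Om)\le E(B)+\eps V_\alpha(B)$. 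If $|\widehat\Om|<1$, then $f_\eta(|\widehat\Om|)=\eta(|\widehat\Om|-1)\ge-\eta$, so $E(\widehat\Om)\le E(B)+\eps V_\alpha(B)+\eta$. In either case
\[
E(\widehat\Om)\le E(B)+\eps V_\alpha(B)+\eta.
\]
Now choose $\eps_0=\eps_0(N,\alpha)$ and $\eta_0=\eta_0(N,\alpha)$ so that $\eps_0 V_\alpha(B)\le -E(B)/4$ and $\eta_0\le -E(B)/2$; explicitly $\eps_0:=-E(B)/(4V_\alpha(B))$ and $\eta_0:=-E(B)/2$, both of which depend only on $N$ and $\alpha$ since $B$ is the unit-measure ball. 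Then for $\eps\le\eps_0$, $\eta\le\eta_0$ we get $E(\widehat\Om)\le E(B)-E(B)/4-E(B)/2=E(B)/4<0$, which is the claim.

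I do not expect any genuine obstacle here: the proof is a one-line comparison with the ball followed by bookkeeping on the sign of $f_\eta$ and the magnitude of $\eps V_\alpha(B)$. The only point requiring a little care is making sure the constants $\eps_0,\eta_0$ depend only on $N$ and $\alpha$ and not on $R$; this holds because the competitor $B$ has measure $1$ (hence is admissible for every $R>\omega_N^{-1/N}$) and $E(B)$, $V_\alpha(B)$ are absolute constants. One should also double-check the admissibility $B\subset B_R$, which is exactly the standing hypothesis $R>\omega_N^{-1/N}$. Finally, note the statement as typeset has a small typo (``$\eta_0=\eta_0(N,\alpha)>$'' is missing a $0$), but the intended meaning is clearly $\eta_0>0$, consistent with the construction above.
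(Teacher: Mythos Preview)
Your proof is correct and follows essentially the same route as the paper: compare the minimizer with the unit-measure ball, drop the nonnegative term $\eps V_\alpha(\widehat\Om)$, split into the cases $|\widehat\Om|\ge1$ and $|\widehat\Om|<1$ to control $f_\eta(|\widehat\Om|)$, and choose $\eps_0,\eta_0$ accordingly. The only cosmetic differences are your (ultimately unused) digression on rescaled balls $tB$ and the precise value of $\eta_0$ (the paper takes $\eta_0=-E(B)/4$, you take $-E(B)/2$; both work).
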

\begin{proof}
The existence of an optimal set $\widehat \Om$ follows from Lemma~\ref{le:existminG}.
If $|\widehat \Om|\geq 1$, then we have, calling $B$ a ball of unit measure, \[
E(\widehat \Om)\leq E(\widehat \Om)+\eps  V_\alpha(\widehat \Om)+\frac{1}{\eta}(|\widehat \Om|-1)\leq E(B)+\eps  V_\alpha(B)\leq \frac{E(B)}{4}<0,
\]
by minimality of $\widehat \Om$ and as soon as we take {$\eps\leq \eps_0:=\frac{-E(B)}{4V_\alpha(B)}$. }

On the other hand, if $|\widehat \Om|<1$, using again the optimality of $\widehat \Om$ we have \[
E(\widehat \Om)+\eta(|\widehat \Om|-1)\leq E(\widehat \Om)+\eps V_\alpha(\widehat \Om)+\eta(|\widehat \Om|-1)\leq E(B)+\eps  V_\alpha(B), 
\]
that is,
\[
E(\widehat \Om)\leq E(B)+\eps  V_\alpha(B)+\eta\leq \frac{E(B)}{4}<0,
\]
as soon as $\eps\leq \eps_0$ and $\eta\leq \eta_0=\frac{-E(B)}{4}$.
\end{proof}
%

%\newpage
\section{First regularity properties of minimizers of the unconstrained problem}\label{MR}
In this Section we essentially follow the approach of~\cite[Section~4]{brdeve}, which is in turn based on the seminal paper by Alt and Caffarelli~\cite{alca}, to prove density estimates, and Lipschitz regularity of the torsion function of minimizers for  Problem \eqref{eq:minGeta1}.

The keystone idea is that we can pass from a functional defined on the class of quasi-open sets, to another  defined on functions. In fact, for any $\Om\subset B_R$ quasi-open, calling $u$ its torsion function, we have that \[
\mathcal G_{\eps,\eta}(\Om)=\mathcal G_{\eps,\eta}(\{u>0\}).
\]
Moreover, if $\Om_{{\eps,\eta}}$ is optimal for problem~\eqref{eq:minGeta1}, using the definition and minimality properties of its torsion function $u_{{\eps,\eta}}$, we have that, for all $v\in H^1_0(B_R)$, 
\begin{equation}\label{eq:minpropu}
\begin{split}
&\frac{1}{2}\int |\nabla u_{{\eps,\eta}}|^2-\int u_{{\eps,\eta}}+\eps V_\alpha(\{u_{{\eps,\eta}}>0\})+f_\eta(|\{u_{{\eps,\eta}}>0\}|)\\
&\leq \frac{1}{2}\int |\nabla v|^2-\int  v+\eps V_\alpha(\{v>0\})+f_\eta(|\{v>0\}|).
\end{split}
\end{equation}
\begin{remark}
In this section, we stress that instead of working on optimal sets for problem~\eqref{eq:minGeta1}, we focus on functions optimal for problem~\eqref{eq:minpropu}. Clearly if $u$ is optimal for problem~\eqref{eq:minpropu}, then it must be the torsion function of $\{u>0\}$, therefore the two formulations are equivalent.
\end{remark}
By Lemma~\ref{le:knupfermuratov} we get that  $u_{{\eps,\eta}}$ behaves like a \emph{quasi-minimizer}\footnote{This terminology is borrowed by the theory of quasi-minimizers for the perimeter, see \cite[Chapter~3]{maggi}.} of a free boundary-type problem, that is
\begin{equation}\label{eq:uquasiminimizer}
\begin{split}
&\frac{1}{2}\int |\nabla u_{{\eps,\eta}}|^2-\int u_{{\eps,\eta}}+f_\eta(|\{u_{{\eps,\eta}}>0\}|)\\
&\leq \frac{1}{2}\int |\nabla v|^2-\int  v+f_\eta(|\{v>0\}|)\\
&+C\eps|\{u_{{\eps,\eta}}>0\}\Delta\{v>0\}|\;\Big[|\{u_{\eps,\eta}>0\}|^{\frac{\alpha}{N}}+|\{v>0\}|^{\frac{\alpha}{N}}\Big],
\end{split}
\end{equation}
for all $v\in H^1_0(B_R)$ and with a constant $C$ depending only on $N,\alpha$.
Since $v,u_{\eps,\eta}\in H^1_0(B_R)$, from~\eqref{eq:uquasiminimizer} ensues \[
\begin{split}
&\frac{1}{2}\int |\nabla u_{{\eps,\eta}}|^2-\int u_{{\eps,\eta}}+f_\eta(|\{u_{{\eps,\eta}}>0\}|)\\
&\leq \frac{1}{2}\int |\nabla v|^2-\int  v+f_\eta(|\{v>0\}|)+2C|B_R|^{\frac{\alpha}{N}}\eps|\{u_{{\eps,\eta}}>0\}\Delta\{v>0\}|.
\end{split}
\]

This quasi-minimality property does not provide any new information by itself and we need to take advantage of the (smallness of the) parameter $\eps$, since the volume term is not in general of lower order. 
We also observe that if $v\in H^1_0(B_R)$ is such that 
\[
\{v>0\}\subset \{u_{{\eps,\eta}}>0\},
\]
then inequality \eqref{eq:minpropu} together with  the monotonicity of $ V_\alpha$ entails that
\begin{equation}\label{eq:uquasiminimizerbis}
\begin{split}
&\frac{1}{2}\int |\nabla u_{{\eps,\eta}}|^2-\int u_{{\eps,\eta}}+f_\eta(|\{u_{{\eps,\eta}}>0\}|)\\
&\leq \frac{1}{2}\int |\nabla v|^2-\int  v+f_\eta(|\{v>0\}|),
\end{split}
\end{equation}
and we stress the fact that the parameter $\alpha$ does not appear in this formulation.
Therefore, it should not be surprising that in the next Lemma~\ref{le:lemma4.9} the constants ( as for example $K_0,\rho_0$) do not depend on $\alpha$.

We continue our analysis of the regularity of minimizers with the following non-degeneracy lemma. Its proof, which we provide for the sake of completeness, is basically a rewriting of~\cite[Lemma~4.9]{brdeve},  in turn inspired by~\cite[Lemma~3.4]{alca}.
\begin{lemma}\label{le:lemma4.9}
Let $\alpha\in(0,N)$, $R>0$, $\eta\in (0,1)$, $\eps\in(0,1)$ and $\Om$ be an optimal set for the problem
\begin{equation}\label{eq:geps}
\min{\Big\{\mathcal G_{\eps,\eta}(A) : A\subset B_R,\text{ quasi-open}\Big\}},
\end{equation}
we call $u\in H^1_0(\Om)$ its torsion function.
For every $\kappa\in (0,1)$, there are positive constants $K_0,\rho_0$ depending only on $\kappa, \eta, N$ such that the following assertion holds: if $\rho\leq \rho_0$ and $x_0\in B_R$, then 
\begin{equation}\label{eq:nondegmean}
\mean{\partial B_\rho(x_0)\cap B_R}{u\,d\mathcal H^{N-1}}\leq K_0 \rho\,\,\,\,
\Longrightarrow \,\,\,\,u\equiv0 \text{ in }\,\,B_{\kappa\rho}(x_0)\cap B_R.
\end{equation}
\end{lemma}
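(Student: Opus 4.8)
The strategy is the classical Alt--Caffarelli competitor construction, adapted to the quasi-minimality property~\eqref{eq:uquasiminimizerbis}: we replace $u$ inside the ball $B_\rho(x_0)$ by the harmonic competitor and show that, if the boundary average of $u$ is too small, the energy of the harmonic replacement beats that of $u$, forcing $u\equiv 0$ on a smaller concentric ball. Since the competitor we build will satisfy $\{v>0\}\subset\{u>0\}$, we may use the cleaner inequality~\eqref{eq:uquasiminimizerbis}, which is exactly why the constants $K_0,\rho_0$ end up independent of $\alpha$ and $\eps$.

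First I would fix $x_0\in B_R$, $\rho\le\rho_0$ (with $\rho_0$ to be chosen), and let $h$ be the harmonic function in $B_\rho(x_0)\cap B_R$ with boundary datum $u$ on $\partial(B_\rho(x_0)\cap B_R)$; define the competitor $v$ by $v=u$ outside $B_\rho(x_0)$ and $v=(h-c\,\text{(torsion correction)})_+$ inside, or more simply $v=\min(u,\text{harmonic lift})$ suitably truncated so that $\{v>0\}\subseteq\{u>0\}$. Plugging $v$ into~\eqref{eq:uquasiminimizerbis} and using that $h$ minimizes the Dirichlet energy with its own boundary values, the Dirichlet-energy difference $\frac12\int_{B_\rho(x_0)}(|\nabla u|^2-|\nabla v|^2)$ is bounded below by $\frac12\int_{B_\rho(x_0)}|\nabla(u-v)|^2$. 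The linear term $\int(u-v)$ and the volume term $f_\eta(|\{u>0\}|)-f_\eta(|\{v>0\}|)$ are controlled, via~\eqref{eq:propfeta}, by $\tfrac1\eta|\{0<u<\cdots\}\cap B_\rho(x_0)|\lesssim \tfrac1\eta|B_\rho|$, i.e.\ by $C(N,\eta)\rho^N$; the key point is that these are lower-order compared to the $\rho^{N-2}$-type scaling of the energy gain when the boundary average is of order $K_0\rho$. The standard chain is: (i) a Poincaré-type / trace inequality on $\partial B_\rho(x_0)\cap B_R$ relating $\int_{\partial B_\rho\cap B_R}u\,d\mathcal H^{N-1}$ to $\int_{B_\rho\cap B_R}|\nabla(u-v)|^2$ together with an isoperimetric/measure estimate on the positivity set, cf.\ \cite[Lemma~3.4]{alca} and \cite[Lemma~4.9]{brdeve}; (ii) choosing $K_0$ small forces $u$ to vanish on $B_{\kappa\rho}(x_0)\cap B_R$.

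More precisely, I expect the argument to run as follows. Set $m(\rho):=\mean{\partial B_\rho(x_0)\cap B_R}{u\,d\mathcal H^{N-1}}$ and suppose $m(\rho)\le K_0\rho$. One shows, using the subharmonicity of $u$ (which holds since $-\Delta u=1>0$ wherever $u>0$, so $u$ is subharmonic in $B_R$ after the standard sign convention, or one uses $-\Delta u \le 1$), that $\sup_{B_{\kappa\rho}(x_0)\cap B_R}u$ is controlled by $m(\rho)$ plus a $\rho^2$ error coming from the inhomogeneity. Then a Cacciopoli/energy estimate on the annular region $(B_\rho\setminus B_{\kappa\rho})(x_0)$, combined with the quasi-minimality~\eqref{eq:uquasiminimizerbis} against the competitor $v$ equal to $\max(0,\,$ linear interpolation between $0$ on $\partial B_{\kappa\rho}$ and $u$ on $\partial B_\rho)$ in the annulus and $0$ inside $B_{\kappa\rho}$, yields
\[
\int_{B_{\kappa\rho}(x_0)\cap B_R}|\nabla u|^2 + \big|\{u>0\}\cap B_{\kappa\rho}(x_0)\big|\;\le\; C(N,\eta,\kappa)\,\big(K_0^2\rho^N + \rho^{N+1}\big).
\]
On the other hand, if $u\not\equiv 0$ on $B_{\kappa\rho}(x_0)\cap B_R$, a lower (non-degeneracy-type) bound forces $\int_{B_{\kappa\rho}}|\nabla u|^2 \ge c(N,\kappa)\rho^N$ whenever the boundary average on some intermediate sphere is positive — contradiction once $K_0$ is small and $\rho\le\rho_0$. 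Hence $u\equiv 0$ on $B_{\kappa\rho}(x_0)\cap B_R$.

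\textbf{Main obstacle.} The delicate point is not the Dirichlet-energy comparison — that is textbook Alt--Caffarelli — but correctly handling the inhomogeneous term $-\int u$ and, above all, the volume penalization $f_\eta$: because $f_\eta$ has slope $1/\eta$ on $\{s\ge 1\}$, the ``cost'' of shrinking the positivity set is $O(\rho^N/\eta)$, which is why $K_0$ and $\rho_0$ must be allowed to depend on $\eta$, and one must be careful that the competitor genuinely decreases (or does not increase too much) the measure term; using $\{v>0\}\subset\{u>0\}$ and the lower bound in~\eqref{eq:propfeta} is what saves the day, since then $f_\eta(|\{v>0\}|)-f_\eta(|\{u>0\}|)\le -\eta\,|\{u>0\}\cap B_{\kappa\rho}(x_0)|\le 0$ actually \emph{helps}. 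The second subtlety is the boundary effect near $\partial B_R$: all integrals are over $B_\rho(x_0)\cap B_R$ and all competitors must lie in $H^1_0(B_R)$, so one must check that the harmonic/linear extension used as competitor still vanishes on $\partial B_R$ — this is automatic if one extends by $u$ itself outside $B_\rho(x_0)$, since $u\in H^1_0(B_R)$, and the trace inequalities on the spherical cap $\partial B_\rho(x_0)\cap B_R$ hold with constants depending only on $N$ (uniformly in the position of $x_0$ relative to $\partial B_R$), which is exactly the content of the corresponding step in \cite[Lemma~4.9]{brdeve}.
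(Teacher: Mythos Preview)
Your overall strategy is right and matches the paper: build an inward competitor so that \eqref{eq:uquasiminimizerbis} applies (whence $\alpha,\eps$ drop out), use subharmonicity of $u+\frac{|x|^2-\rho^2}{2N}$ to bound $\delta_\rho:=\sup_{B_{\sqrt\kappa\rho}}u\le c(K_0\rho+\rho^2)$, and compare energies. The paper's specific competitor is the torsion function $w$ on the annulus $B_{\sqrt\kappa\rho}\setminus B_{\kappa\rho}$ (solving $-\Delta w=1$, $w=\delta_\rho$ on the outer sphere, $w=0$ on the inner), with $v=\min(u,w)$ in $B_{\sqrt\kappa\rho}$ and $v=u$ outside --- not a harmonic replacement or a linear cutoff. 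Taking the $\min$ both guarantees $\{v>0\}\subset\{u>0\}$ and, via the equation for $w$, yields the clean identity
\[
\int_{(B_{\sqrt\kappa\rho}\setminus B_{\kappa\rho})\cap\{u>w\}}\big(|\nabla w|^2-\nabla u\cdot\nabla w\big)-\int_{(B_{\sqrt\kappa\rho}\setminus B_{\kappa\rho})\cap\{u>w\}}(w-u)=\int_{\partial B_{\kappa\rho}}\frac{\partial w}{\partial\nu}\,u\,d\mathcal H^{N-1}.
\]

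The genuine gap is your closing step. You assert
\[
\int_{B_{\kappa\rho}}|\nabla u|^2+|\{u>0\}\cap B_{\kappa\rho}|\le C(N,\eta,\kappa)\big(K_0^2\rho^N+\rho^{N+1}\big),
\]
and then conclude by invoking a ``lower (non-degeneracy-type) bound'' $\int_{B_{\kappa\rho}}|\nabla u|^2\ge c\rho^N$ if $u\not\equiv 0$. That lower bound \emph{is} the content of the lemma you are proving, so the argument is circular; and even if your displayed inequality held, it only says the left side is small, not zero, so nothing forces $u$ to vanish. The correct closure is an \emph{absorption}: after bounding $|\partial_\nu w|\le \beta_1(\delta_\rho+\rho^2)/\rho$ on $\partial B_{\kappa\rho}$ and applying the $W^{1,1}$ trace inequality together with $u\le\delta_\rho$ on $B_{\kappa\rho}$, the right-hand side is controlled by the left-hand side itself:
\[
\frac{\eta}{2}\int_{B_{\kappa\rho}}|\nabla u|^2+\frac{\eta}{2}\,|\{u>0\}\cap B_{\kappa\rho}|\le \beta(K_0,\rho_0)\left(\int_{B_{\kappa\rho}}|\nabla u|^2+|\{u>0\}\cap B_{\kappa\rho}|\right),
\]
with $\beta(K_0,\rho_0)\to 0$ as $K_0,\rho_0\to 0$. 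Choosing $K_0,\rho_0$ so that $\beta<\eta/4$ forces both terms to vanish, hence $u\equiv 0$ in $B_{\kappa\rho}$. Note finally that since the competitor \emph{shrinks} the positivity set, the $f_\eta$ term contributes with slope $\eta$ on the left (not $1/\eta$ on the right as you first wrote); this is exactly why $K_0,\rho_0$ depend on $\eta$.
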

\begin{proof}
Without loss of generality, we fix $x_0=0$. We also extend $u$ to zero outside $B_R$, so that it satisfies $-\Delta u\leq 1$ in $\R^N$ in weak sense. Then the function \[
x\mapsto u(x)+\frac{|x|^2-\rho^2}{2N}
\]
is subharmonic in $B_\rho$.
Thus, for every $\kappa\in(0,1)$, there exists $c=c(\kappa,N)$ such that 
\begin{equation}\label{eq:4.20}
\delta_\rho:=\sup_{B_{\sqrt{\kappa}\rho}}u\leq c\left(\mean{\partial B_\rho\cap B_R}{u\,d\mathcal H^{N-1}}+\rho^2\right)\leq c(K_0\rho+\rho^2).
\end{equation}
Let now $w$ be the solution of \begin{equation}\label{www}
\begin{cases}
-\Delta w=1,\qquad &\text{in }B_{\sqrt{\kappa}\rho}\setminus B_{\kappa \rho},\\
w=\delta_\rho,\qquad &\text{on }\partial B_{\sqrt{\kappa}\rho},\\
w=0,\qquad &\text{on }B_{\kappa\rho}.
\end{cases}
\end{equation}
By definition, $w\geq u$ on $\partial B_{\sqrt{\kappa}\rho}$, therefore the function \[
v=
\begin{cases}
u,\qquad &\text{in }\R^N\setminus B_{\sqrt{\kappa}\rho},\\
\min\{u,w\},\qquad &\text{in }B_{\sqrt{\kappa}\rho},
\end{cases}
\]
satisfies \[
\{v>0\}\subset \{u>0\},\qquad \{v>0\}\setminus B_{\sqrt{\kappa}\rho}=\{u>0\}\setminus B_{\sqrt{\kappa}\rho}.
\]
Since $v\in H^1_0(B_R)$  inequality~\eqref{eq:uquasiminimizerbis} gives\[
\begin{split}
&\frac{1}{2}\int_{B_{\sqrt{\kappa}\rho}} |\nabla u|^2-\int_{B_{\sqrt{\kappa}\rho}} u+f_\eta(|\{u>0\}|)\\
&\leq \frac{1}{2}\int_{B_{\sqrt{\kappa}\rho}} |\nabla v|^2-\int_{B_{\sqrt{\kappa}\rho}}  v+f_\eta(|\{v>0\}|).
\end{split}
\]
We note that $v=0$ in $B_{\kappa\rho}$, therefore, using also~\eqref{eq:propfeta}, \[
\begin{split}
\frac{\eta}{2}|\{u>0\}\cap B_{\kappa\rho}|&\leq \eta|(\{u>0\}\setminus \{v>0\})\cap B_{\sqrt{\kappa}\rho}|\\
&\leq f_\eta(|\{u>0\}|)-f_\eta(|\{v>0\}|).
\end{split}
\]
Thanks to the two inequalities above and the definition of $v$, we can infer 
\begin{equation}\label{eq:4.21}
\begin{split}
&\frac12\int_{B_{\kappa\rho}}|\nabla u|^2-\int_{B_{\kappa\rho}}u+\frac{\eta}{2}|\{u>0\}\cap B_{\kappa\rho}|\\
&\leq \frac12\int_{B_{\kappa\rho}}|\nabla u|^2-\int_{B_{\kappa\rho}}u+f_\eta(|\{u>0\}|)-f_\eta(|\{v>0\}|)\\
&\leq \frac12\int_{B_{\sqrt{\kappa}\rho}\setminus B_{\kappa\rho}}(|\nabla v|^2-|\nabla u|^2)-\int_{B_{\sqrt{\kappa}\rho}\setminus B_{\kappa\rho}}(v-u)\\
&\leq \int_{(B_{\sqrt{\kappa}\rho}\setminus B_{\kappa\rho})\cap\{u>w\}}(|\nabla w|^2-\nabla u\cdot\nabla w)-\int_{(B_{\sqrt{\kappa}\rho}\setminus B_{\kappa\rho})\cap \{u>w\}}(w-u).
\end{split}
\end{equation} 
On the other hand testing \eqref{www} with $(u-w)_+$ and integrating over ${B_{\sqrt{\kappa}\rho}\setminus B_{\kappa\rho}}$, we obtain 
\begin{equation}\label{eq:4.22}
\int_{(B_{\sqrt{\kappa}\rho}\setminus B_{\kappa\rho})\cap \{u>w\}}(|\nabla w|^2-\nabla u\cdot\nabla w)-\int_{(B_{\sqrt{\kappa}\rho}\setminus B_{\kappa\rho})\cap \{u>w\}}(w-u)=\int_{\partial B_{\kappa\rho}}\frac{\partial w}{\partial \nu}u\,d\mathcal H^{N-1},
\end{equation}
where $\nu$ denotes the outer unit normal exiting from $B_{\kappa\rho}$ and thanks to the fact that $w=0$ on $\partial B_{\kappa\rho}$ and $w\geq u$ on $\partial B_{\sqrt{\kappa}\rho}$.
We now observe that, since the torsion function on an annulus is explicit, with a direct computation one obtains\[
\left|\frac{\partial w}{\partial \nu}\right|\leq \beta_1\frac{\delta_\rho+\rho^2}{\rho},\qquad \text{on }\partial B_{\kappa\rho},
\]
for some $\beta_1=\beta_1(N,\kappa)$.
We can now combine~\eqref{eq:4.21} and~\eqref{eq:4.22} to obtain
\begin{equation}\label{eq:4.23}
\frac12\int_{B_{\kappa\rho}}|\nabla u|^2-\int_{B_{\kappa\rho}}u+\frac{\eta}{2}|\{u>0\}\cap B_{\kappa\rho}|\leq \beta_1(N,\kappa)\frac{\delta_\rho+\rho^2}{\rho}\int_{\partial B_{\kappa\rho}}u\,d\mathcal H^{N-1}.
\end{equation}
Then, using the definition of $\delta_\rho$, the trace inequality in $W^{1,1}$ and the arithmetic geometric mean inequality we obtain \[
\begin{split}
&\int_{\partial B_{\kappa\rho}}u\,d\mathcal H^{N-1}\leq C(N,\kappa)\left(\frac{1}{\rho}\int_{B_{\kappa\rho}}u+\int_{B_{\kappa\rho}}|\nabla u|\right)\\
&\leq \beta_2\left(\left(\frac{\delta_\rho}{\rho}+\frac12\right)|\{u>0\}\cap B_{\kappa\rho}|+\frac12\int_{B_{\kappa\rho}}|\nabla u|^2\right),
\end{split}
\]
for some $\beta_2=\beta_2(N,\kappa)>0$.
Putting together the above estimates, recalling again~\eqref{eq:4.20} we have, for all $\rho\leq \rho_0$ \[
\begin{split}
&\frac{\eta}{2}\int_{B_{\kappa\rho}}|\nabla u|^2+\frac{\eta}{2}|\{u>0\}\cap B_{\kappa\rho}|\\
&\leq \beta_1\frac{\delta_\rho+\rho^2}{\rho}\int_{\partial B_{\kappa\rho}}u\,d\mathcal H^{N-1}+\delta_\rho|\{u>0\}\cap B_{\kappa\rho}|\\
&\leq \beta_1(c(K_0+\rho)+\rho)\int_{\partial B_{\kappa\rho}}u\,d\mathcal H^{N-1}+c(K_0\rho+\rho^2)|\{u>0\}\cap B_{\kappa\rho}|\\
&\leq \beta_1\beta_2(c(K_0+\rho)+\rho)\left[ \left(\frac{\delta_\rho}{\rho}+\frac12\right)|\{u>0\}\cap B_{\kappa\rho}|+\frac12\int_{B_{\kappa\rho}}|\nabla u|^2\right]\\
&\hspace{50pt}+c(K_0\rho+\rho^2)|\{u>0\}\cap B_{\kappa\rho}|\\
&\leq \beta_1\beta_2(c(K_0+\rho)+\rho)\left(2c(K_0+\rho)+\frac12\right)\left[ \int_{B_{\kappa\rho}}|\nabla u|^2+|\{u>0\}\cap B_{\kappa\rho}|\right].
\end{split}
\]
Eventually, by choosing $K_0,\rho_0$ such that \[
\beta_1\beta_2(c(K_0+\rho_0)+\rho_0)\left(2c(K_0+\rho_0)+\frac12\right)\leq \eta/4,
\] 
we conclude that $u\equiv 0$ in $B_{\kappa\rho}$, for all $\rho\leq\rho_0$.
\end{proof}
%{\color{red} It is important to note that, if we decrease $\eta$, then one should decrease $m$ and $\rho_0$. (We have something like $m+\rho_0\leq \eta$)}

%
\begin{remark}\label{rmk:formulazioninondeg}
In  literature, the property proved in Lemma~\ref{le:lemma4.9} is called \emph{non-degeneracy}.
As it was noted for example in~\cite[Remark~2.8]{mtv}, there are two other equivalent versions of this result, where instead of the claim~\eqref{eq:nondegmean}, one can consider 
\begin{equation}\label{eq:nondegLinfty}
\|u\|_{L^\infty(B_\rho(x_0))}\leq K_0\rho\qquad \Longrightarrow \qquad u\equiv 0\text{ in }B_{\kappa\rho}(x_0)\cap B_R,
\end{equation}
or 
\begin{equation}\label{eq:nondegmeanpallapiena}
\mean{B_\rho(x_0)}{u\,dx}\leq K_0\rho\qquad \Longrightarrow \qquad u\equiv 0\text{ in }B_{\kappa\rho}(x_0)\cap B_R,
\end{equation}
 up to possibly modify the constants $K_0,\rho_0$ (but not their dependence only on $N,\kappa,\eta$).
\end{remark}

\begin{remark}\label{rmk:subsolutions}
As it was first highlighted in~\cite{bu}, Lemma~\ref{le:lemma4.9} holds for all sets that are optimal for a torsion energy-type functional only with respect to inward perturbations.
These sets are referred to as \emph{shape subsolutions} or \emph{inward minimizing sets} and one can easily prove that if $\Om$ is optimal for problem~\eqref{eq:minGeta1}, then it is a shape subsolution for the torsion energy.
Thus the non-degeneracy property of Lemma~\ref{le:lemma4.9} follows from~\cite[Theorem~2.2]{bu}.
Nevertheless we do not follow this approach  since for our scope we need finer regularity properties of optimal sets that can not be deduced only by means of inward perturbations.
\end{remark}
\begin{remark}
To obtain the regularity properties for minimizers we seek in this section, the previous lemma has to be paired with Lemma \ref{le:lemma4.10} below. Its proof is, as for the previous lemma, inspired by ~\cite[Lemma~4.10]{brdeve}, which is in turn based on \cite{alca}. One not completely obvious difference is that, contrary to the setting of~\cite{brdeve}, the parameter $\eta$ is not fixed in our setting, thus we need to keep track of it in the proofs. This dependence on $\eta$ will involve a dependence on $R$,  the radius of the ball containing all competitors in Theorem \ref{thm:mainvero2}. In particular the density estimates which ensue by the previous lemmata will depend on $R$, and this is a main obstacle in order to remove the equiboundedness hypothesis on competitors in \eqref{eq:mintor}.
\end{remark}

\begin{lemma}\label{le:lemma4.10}
Let $\alpha\in (0,N)$, $R$, $\eta$, $\eps$, $\Om$ and $u$ be as in Lemma~\ref{le:lemma4.9}. There exists a constant $M$, depending only on $N$, $\alpha$, $R$ and $\eta$ such that, for all $x_0\in B_R$, if 
\begin{equation}\label{eq:4.24}
\mean{\partial B_\rho(x_0)\cap B_R}{u\,d\mathcal H^{N-1}}\geq M\rho,
\end{equation}
then $u>0$ in $B_\rho(x_0)\cap B_R$.
\end{lemma}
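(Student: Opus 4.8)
The plan is to prove the upper-density (or "pointwise positivity") estimate of Lemma~\ref{le:lemma4.10} by the classical Alt--Caffarelli dichotomy argument, comparing $u$ with its harmonic-type replacement in a ball and showing that if the average of $u$ on $\partial B_\rho(x_0)$ is not large, then a competitor obtained by truncating $u$ produces a gain in energy, so that the free boundary must be kept away from $x_0$. Concretely, fix $x_0\in B_R$, set for brevity $B_\rho=B_\rho(x_0)$ and extend $u$ by $0$ outside $B_R$. Let $h$ be the harmonic function in $B_\rho$ with $h=u$ on $\partial B_\rho$ (equivalently, solve $-\Delta h=1$ with boundary datum $u$; since $-\Delta u\le 1$ one has $u\le h$), and use $h$ (suitably modified away from $B_\rho$ so it stays in $H^1_0(B_R)$) as a competitor in the quasi-minimality inequality~\eqref{eq:uquasiminimizer}. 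The point is that $\{h>0\}\supset\{u>0\}$ inside $B_\rho$, hence the measure term in~\eqref{eq:uquasiminimizer} is controlled by $|B_\rho|$ and the Riesz contribution is controlled by $C\eps |B_\rho|\,[\,|B_R|^{\alpha/N}+|B_R|^{\alpha/N}\,]$; meanwhile $f_\eta$ can only increase by $\tfrac{1}{\eta}|\{h>0\}\Delta\{u>0\}|\le \tfrac1\eta|\{u=0\}\cap B_\rho|$.

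The heart of the matter is the energy identity: testing $-\Delta h=1$ and $-\Delta u\le 1$ against $h-u\ge 0$ one gets
\begin{equation}\label{eq:planenergy}
\frac12\int_{B_\rho}|\nabla u|^2-\frac12\int_{B_\rho}|\nabla h|^2-\int_{B_\rho}(u-h)\ge \frac12\int_{B_\rho}|\nabla(u-h)|^2,
\end{equation}
so~\eqref{eq:uquasiminimizer} with the competitor $h$ forces
\[
\frac12\int_{B_\rho}|\nabla(u-h)|^2\le \Big(\frac1\eta+C\eps|B_R|^{\alpha/N}\Big)\,|\{u=0\}\cap B_\rho| + (\text{error from $f_\eta$ sign}).
\]
Next I would estimate $|\{u=0\}\cap B_\rho|$ from below in terms of the deficit between $h$ and $u$ via a Poincaré / Sobolev-trace inequality: since $h-u$ vanishes on $\partial B_\rho$ and $u\equiv 0$ on $\{u=0\}\cap B_\rho$, one controls $\int_{B_\rho}|\nabla(u-h)|^2$ from below by a multiple of $\bigl(\mean{\partial B_\rho\cap B_R}{u}\bigr)^2 \rho^{N-2}$ minus a term scaling like $\rho^N$, using the explicit value of the harmonic extension of a constant. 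Equivalently, one runs the standard iteration on $\phi(\rho):=\mean{\partial B_\rho}{u\,d\mathcal H^{N-1}}/\rho$: if $\phi(\rho)\le M$ with $M$ small, then the comparison shows $\{u>0\}$ has density $0$ at a fraction of points, and combined with the non-degeneracy Lemma~\ref{le:lemma4.9} (in its $L^\infty$ form~\eqref{eq:nondegLinfty}) — which says that once $u$ is small in sup-norm on a ball it is identically zero on a slightly smaller ball — one reaches a contradiction unless $u>0$ throughout $B_\rho\cap B_R$, after fixing $M=M(N,\alpha,R,\eta)$ large enough.

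The main obstacle I expect is bookkeeping the $\eta$- and $R$-dependence cleanly: because $\eta$ is not fixed (unlike in~\cite{brdeve}) and because the Riesz term carries the factor $|B_R|^{\alpha/N}$, the threshold $M$ must be chosen after these constants, and one has to make sure the comparison argument is not spoiled when $\eta$ is small (the coefficient $1/\eta$ in front of $|\{u=0\}\cap B_\rho|$ is large, so the gain from~\eqref{eq:planenergy} must beat it — this is exactly why $M$ grows with $1/\eta$ and with $|B_R|^{\alpha/N}$). A second, more technical point is handling the competitor near $\partial B_R$: if $x_0$ is close to the boundary, $B_\rho(x_0)$ may stick out of $B_R$, so one works with $B_\rho(x_0)\cap B_R$ throughout and uses that $u=0$ on $\partial B_R$, exactly as in~\cite[Lemma~4.10]{brdeve}; the boundary-trace and Poincaré inequalities on the spherical cap $B_\rho\cap B_R$ still hold with dimensional constants, so no essential new difficulty arises there. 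Once $M$ is fixed, the conclusion $u>0$ in $B_\rho(x_0)\cap B_R$ under~\eqref{eq:4.24} follows, and together with Lemma~\ref{le:lemma4.9} it yields the two-sided density estimates and Lipschitz regularity of $u$ claimed at the start of the section.
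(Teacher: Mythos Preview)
Your overall framework matches the paper's: take the replacement $v$ solving $-\Delta v=1$ in $B_\rho(x_0)$ with $v=u$ outside, use it as competitor in~\eqref{eq:uquasiminimizer}, and arrive at
\[
\frac12\int_{B_\rho(x_0)}|\nabla(u-v)|^2 \ \le\ \frac{C+1}{\eta}\,|\{u=0\}\cap B_\rho(x_0)|.
\]
The paper then quotes~\cite[Lemma~3.2]{alca} for the matching \emph{lower} bound
\[
\frac12\int_{B_\rho(x_0)}|\nabla(u-v)|^2 \ \ge\ \frac{M^2}{2}\,|\{u=0\}\cap B_\rho(x_0)|,
\]
which comes from the Poisson representation of $v$: on $\{u=0\}$ one has $v-u=v\ge c_N M\,\mathrm{dist}(\cdot,\partial B_\rho)$ by the hypothesis $\mean{\partial B_\rho}u\ge M\rho$, and a Hardy-type estimate converts this into the displayed bound. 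Choosing $M\ge 2\sqrt{(C+1)/\eta}$ then forces $|\{u=0\}\cap B_\rho(x_0)|=0$.

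Your description of this closing step is where the gap lies. The lower bound you propose, ``$\int|\nabla(u-h)|^2\gtrsim(\mean{\partial B_\rho}u)^2\rho^{N-2}$'', is false in general (if $u>0$ everywhere then $u$ could equal $h$ and the left side vanishes); the correct lower bound carries the factor $|\{u=0\}\cap B_\rho|$, which is precisely what lets you conclude. Your ``equivalently'' paragraph then reverses the logic entirely: you assume $\phi(\rho)\le M$ with $M$ \emph{small} and invoke Lemma~\ref{le:lemma4.9}, but the hypothesis here is that $\phi(\rho)\ge M$ is \emph{large}, and Lemma~\ref{le:lemma4.9} plays no role whatsoever in this proof --- the two lemmas are independent and only combined afterwards in Lemma~\ref{le:lemma4.11}. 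Your remarks on the $\eta$- and $R$-dependence are on target, but the mechanism by which large $M$ forces positivity needs to be the Alt--Caffarelli pointwise-lower-bound/Hardy argument, not an appeal to non-degeneracy.
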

\begin{proof}
First of all, we can reduce to the case when $B_\rho(x_0)\subset B_R$, up to take $M$ (depending only on $N,R$) big enough.
We define $v\in H^1_0(B_R)$ as the solution to 
\[
\begin{cases}
-\Delta v=1,\qquad \text{on }B_\rho,\\
v=u,\qquad \text{in }\R^N\setminus B_\rho(x_0).
\end{cases}
\]
By maximum principle we have $v>0$ in $B_\rho(x_0)$ and therefore \[
\{u>0\}\Delta \{v>0\}=\{u=0\}\cap B_\rho(x_0).
\]
Using this information, the quasi-minimality condition~\eqref{eq:uquasiminimizer} of $u$ and the property of the function $f_\eta$, see~\eqref{eq:propfeta}, we obtain\[
\frac12\int_{B_\rho(x_0)}|\nabla u|^2-\int_{B_\rho(x_0)}u\leq \frac12 \int_{B_\rho(x_0)}|\nabla v|^2-\int_{B_\rho(x_0)}v+\left(\frac{1}{\eta}+C\eps\right)|\{u=0\}\cap B_\rho(x_0)|,
\]
for some constant $C=C(N,\alpha,R)$.
Now we can use the equation satisfied by $v$ and the fact that $\eps<1<1/\eta$, to show\[
\frac12\int_{B_\rho(x_0)}|\nabla u-\nabla v|^2\leq \frac{C+1}{\eta}|\{u=0\}\cap B_\rho(x_0)|.
\]
Then, as in~\cite[Proof of Lemma~4.10]{brdeve} or in~\cite[Proof of Lemma~3.2]{alca}, one obtains\[
\frac{M^2}{2}|\{u=0\}\cap B_\rho(x_0)|\leq \frac{C+1}{\eta}|\{u=0\}\cap B_\rho(x_0)|,
\]
which by choosing $M\geq 2\sqrt{\frac{C+1}{\eta}}$ entails  that $|\{u=0\}\cap B_\rho(x_0)|=0$, and the proof is concluded.
\end{proof}

The main consequence of Lemmas~\ref{le:lemma4.9} and~\ref{le:lemma4.10} is the following result, stated first in~\cite[Section~3]{alca}, see also~\cite[Section~3 and~5]{velectures}.
\begin{lemma}\label{le:lemma4.11}
Let $\alpha\in (0,N)$, $R$, $\eta$, $\eps$, $\Om$ and $u$ be as in Lemma~\ref{le:lemma4.9}.
There exist constants $ \theta(N,\alpha,R,\eta)$ and $\rho_0(N,\alpha,R,\eta)$ such that 
\begin{enumerate}
\item[i)] $u$ is Lipschitz continuous with constant $L=L(N,\alpha,R)$. In particular, $\Om=\{u>0\}$ is an open set.
\item[ii)] For every $x_0\in \partial \Om$ and every $\rho\leq \rho_0$, we have 
{
\begin{equation}\label{eq:4.30}
\theta\leq \frac{|\Om\cap B_\rho(x_0)|}{|B_\rho|}\leq 1-\theta.
\end{equation}
}
\end{enumerate}
\end{lemma}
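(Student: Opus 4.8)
The plan is to combine the two previous lemmas in a by-now standard way, following Alt--Caffarelli \cite{alca}.

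\textbf{Proof of (i).} First I would prove the Lipschitz bound. The function $u$ satisfies $-\Delta u\le 1$ in $B_R$ in the weak sense (since $-\Delta u=1$ in $\{u>0\}$ and $u\ge 0$), so adding the paraboloid $\tfrac{|x-x_0|^2}{2N}$ makes $u$ subharmonic, and on any ball $B_\rho(x_0)\subset B_R$ one gets $\sup_{B_{\rho/2}(x_0)}u\le C\big(\mean{\partial B_\rho(x_0)}{u}+\rho^2\big)$. The point is to bound $|\nabla u(x_0)|$ for $x_0\in\Om$: set $d=\mathrm{dist}(x_0,\partial\Om)$ (finite, else $u$ is the torsion function of the whole $B_R$ and the bound is trivial by interior elliptic estimates depending on $N,R$). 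If $d$ is small, pick $y_0\in\partial\Om$ realizing $d$; by the contrapositive of the non-degeneracy Lemma~\ref{le:lemma4.9} (equivalently the $L^\infty$ version~\eqref{eq:nondegLinfty} in Remark~\ref{rmk:formulazioninondeg}), since $u$ is \emph{not} identically zero in $B_{d}(y_0)\ni x_0$ we must have $\|u\|_{L^\infty(B_{2d}(y_0))}\ge K_0\cdot 2d$... more precisely one argues: were $\mean{\partial B_{2d}(y_0)}{u}\le K_0 (2d)$, Lemma~\ref{le:lemma4.9} with a suitable $\kappa$ would force $u\equiv 0$ on $B_{\kappa\cdot 2d}(y_0)$, contradicting $u(x_0)>0$ once $\kappa$ is chosen so that $x_0\in B_{2\kappa d}(y_0)$. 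Hence $\sup_{B_{2d}(y_0)}u\gtrsim d$; combined with the subharmonicity estimate above on $B_{4d}(x_0)\subset B_{5d}(y_0)$ and interior gradient estimates for $-\Delta u=1$ on $B_d(x_0)$, one gets $|\nabla u(x_0)|\le L(N,\alpha,R)$, uniformly. Since $u$ is continuous with $u>0$ exactly on $\Om$, openness of $\Om$ follows.

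\textbf{Proof of (ii).} For the density estimates fix $x_0\in\partial\Om$ and $\rho\le\rho_0$ (with $\rho_0$ as in Lemma~\ref{le:lemma4.9}, shrunk if needed so $B_{\rho_0}(x_0)$ behaves well relative to $B_R$). Lower bound on $|\Om\cap B_\rho(x_0)|$: if $|\Om\cap B_\rho(x_0)|$ were too small, then by the Lipschitz bound $u\le L\rho$ on $B_\rho(x_0)$, so $\mean{B_\rho(x_0)}{u}\le L\rho\cdot\frac{|\Om\cap B_\rho(x_0)|}{|B_\rho|}$, and choosing the density threshold $\theta$ small enough this is $\le K_0\rho$; the ball-average version~\eqref{eq:nondegmeanpallapiena} of non-degeneracy then gives $u\equiv 0$ on $B_{\kappa\rho}(x_0)$, contradicting $x_0\in\partial\Om$. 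Upper bound: since $-\Delta u=1\ge 0$ on $\Om\cap B_\rho(x_0)$ and $u\ge 0$, $u$ is a nonnegative supersolution there, and $x_0\in\partial\Om$ means $u$ vanishes on a set touching $x_0$; a comparison with the torsion function of $B_\rho(x_0)$ together with Lemma~\ref{le:lemma4.10} (which says that if the boundary average of $u$ on $\partial B_\rho(x_0)$ is $\ge M\rho$ then $B_\rho(x_0)\subset\Om$, impossible here) forces $\mean{\partial B_\rho(x_0)}{u}< M\rho$; feeding this into the standard Alt--Caffarelli argument (test the minimality~\eqref{eq:uquasiminimizerbis} against $v=$ the harmonic-type replacement of $u$ in $B_\rho(x_0)$, i.e. $v$ solving $-\Delta v=1$ with $v=u$ on $\partial B_\rho(x_0)$, and estimate $\int_{B_\rho(x_0)}|\nabla(u-v)|^2$ from below by $c\,|\{u=0\}\cap B_\rho(x_0)|\,\big(\mean{\partial B_\rho}{u}/\rho\big)^2$ ... actually the relevant bound is $|\{u=0\}\cap B_\rho(x_0)|\le C\rho^{N-1}\mean{\partial B_\rho(x_0)}{u}$) yields $|B_\rho(x_0)\setminus\Om|\ge c\,|B_\rho|$, which is the desired upper bound $\frac{|\Om\cap B_\rho(x_0)|}{|B_\rho|}\le 1-\theta$.

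\textbf{Main obstacle.} The delicate point is the uniform Lipschitz constant: one must verify that the non-degeneracy constant $K_0$ from Lemma~\ref{le:lemma4.9} (which depends on $\eta,N,\kappa$ only) and the subharmonicity estimate interlock so that the gradient bound $L$ depends only on $N,\alpha,R$, even though $\eta$ itself will ultimately be chosen in terms of $R$; so really $L=L(N,\alpha,R)$ after that choice. Everything else is a transcription of \cite[Section~4]{brdeve} and \cite{alca}, with the only genuine change being the bookkeeping of the $\eta$-dependence, which is harmless because $\eta\in(0,1)$ enters the estimates monotonically through~\eqref{eq:propfeta} and the constants in Lemmas~\ref{le:lemma4.9}, \ref{le:lemma4.10}. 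I would therefore present the proof as: (a) derive the $L^\infty$--and--mean-value non-degeneracy equivalences (cite Remark~\ref{rmk:formulazioninondeg}); (b) prove the Lipschitz bound via the dichotomy ``far from $\partial\Om$'' (interior estimates) versus ``close to $\partial\Om$'' (non-degeneracy plus subharmonic mean-value); (c) derive the two density bounds from Lipschitz continuity $+$ non-degeneracy and from the Alt--Caffarelli energy comparison $+$ Lemma~\ref{le:lemma4.10}, respectively.
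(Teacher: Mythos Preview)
The paper does not prove this lemma; it records it as a consequence of Lemmas~\ref{le:lemma4.9} and~\ref{le:lemma4.10} and refers to \cite[Section~3]{alca} and \cite{velectures}. Your outline follows that scheme, and your interior-density argument is fine, but the proof of (i) has a genuine gap: you have the roles of the two lemmas reversed. To bound $|\nabla u(x_0)|$ via interior estimates on $B_d(x_0)$, where $d=\mathrm{dist}(x_0,\partial\Omega)$, you need an \emph{upper} bound $\sup_{B_d(x_0)} u \le Cd$. Your invocation of Lemma~\ref{le:lemma4.9} at the nearest boundary point $y_0$ produces only the \emph{lower} bound $\sup_{B_{2d}(y_0)} u \ge 2K_0 d$, and a lower bound on $\sup u$ cannot be fed into the subharmonicity estimate (which bounds $\sup u$ from \emph{above} by the boundary mean) to control the gradient. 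The correct step is the contrapositive of Lemma~\ref{le:lemma4.10}: since $y_0\in\partial\Omega$, no ball $B_\rho(y_0)$ lies entirely in $\Omega$, hence $\mean{\partial B_\rho(y_0)\cap B_R}{u\,d\mathcal H^{N-1}}<M\rho$ for every $\rho$; then subharmonicity gives $\sup_{B_{\rho/2}(y_0)}u\le C(M\rho+\rho^2)$, and with $\rho=2d$ the interior gradient estimate closes. This also clarifies the dependence of $L$: it is $M$ (which depends on $\eta$) that enters, so $L=L(N,\alpha,R)$ only after $\eta$ has been fixed in terms of $R$ --- as your ``main obstacle'' paragraph anticipates, but with $M$ in place of $K_0$.

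Your exterior-density argument in (ii) has the same confusion. The bound $\mean{\partial B_\rho(x_0)}{u}<M\rho$ from Lemma~\ref{le:lemma4.10} is not what produces $|\{u=0\}\cap B_\rho|\ge\theta|B_\rho|$; neither of the two estimates you write down yields a lower bound on $|\{u=0\}|$ when combined with an \emph{upper} bound on the mean. What is needed is the \emph{lower} bound $\mean{\partial B_\rho(x_0)}{u}\ge K_0\rho$ from Lemma~\ref{le:lemma4.9} (applicable since $x_0\in\partial\Omega$ implies $u\not\equiv 0$ in every neighbourhood of $x_0$), which forces the replacement $v$ to satisfy $v(x_0)\ge c\rho$ while $u(x_0)=0$; Lipschitz control on $v-u$, Poincar\'e on $B_\rho$, and the energy inequality $\int_{B_\rho}|\nabla(u-v)|^2\le C_\eta|\{u=0\}\cap B_\rho|$ then give the desired lower bound on $|\{u=0\}\cap B_\rho|$.
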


\begin{remark}
Notice that the constants determining the Lipschitz regularity and the density estimates of the previous result do not depend on $\eps$.
\end{remark}
This last result is the starting point of the higher regularity we need, that we treat in Section \ref{HR}.

%%%%%%%%%%%%%%%%%%%%%%%%%%%
%SECTION4: EQUIVALENCE%%%%%%%%%%%
%%%%%%%%%%%%%%%%%%%%%%%%%%%

%\newpage
\section{{Equivalence between the constrained and the unconstrained problem}}\label{sect:equivalence}
In this section we show that unconstrained minima of $\G$ and volume constrained minima of $\mathcal F_{\alpha,\eps}$ are actually the same. We begin by showing that
for $\eps$ small, the minimizers of $\mathcal G_{\eps,\eta}$ in $B_R$ are close to a ball in $L^\infty$.
To do that, we first start with an estimate that assures the $L^1-$proximity of an optimal set for problem~\eqref{eq:minGeta1} to a ball with radius not too large.
\begin{lemma}\label{le:stimadiffsimm}
Let $\alpha\in(0,N)$, $R>\omega_N^{-\frac1N}$ and $\eps,\eta\in(0,1)$. Let $\Om_{{\eps,\eta}}$ be an optimal set for~\eqref{eq:minGeta1} and $B_{{\eps,\eta}}$ a ball of measure $|\Omega_{\eps,\eta}|$ such that
\[
\mathcal A(\Om_{\eps,\eta})=\frac{|\Om_{{\eps,\eta}}\Delta B_{{\eps,\eta}}|}{|\Om_{\eps,\eta}|}.
\]
Then we have
\begin{equation}\label{eq:bounddiffsimm}
{|\Om_{{\eps,\eta}}\Delta B_{{\eps,\eta}}|\leq \frac{2C_0}{\sigma}|\Om_{{\eps,\eta}}|^{1+\frac{\alpha-2}{N}}\eps,}
\end{equation}
where $C_0(N,\alpha)>0$ is the constant appearing in Lemma~\ref{le:knupfermuratov} and $\sigma=\sigma(N)>0$ is the geometric constant from the quantitative Saint Venant inequality, see~\eqref{eq:quantsv}.
\end{lemma}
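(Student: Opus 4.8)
The idea is to compare the optimal set $\Om_{\eps,\eta}$ with the ball $B_{\eps,\eta}$ having the same measure, using minimality of $\Om_{\eps,\eta}$ for $\mathcal G_{\eps,\eta}$ together with the quantitative Saint-Venant inequality of Theorem~\ref{thm:quantitative}. First I would observe that since $|B_{\eps,\eta}|=|\Om_{\eps,\eta}|$, the volume term $f_\eta$ cancels in the comparison:
\[
\mathcal G_{\eps,\eta}(\Om_{\eps,\eta})\le \mathcal G_{\eps,\eta}(B_{\eps,\eta})
\quad\Longrightarrow\quad
E(\Om_{\eps,\eta})+\eps V_\alpha(\Om_{\eps,\eta})\le E(B_{\eps,\eta})+\eps V_\alpha(B_{\eps,\eta}),
\]
provided the ball $B_{\eps,\eta}$ can be used as a competitor — one needs it to fit inside $B_R$, which I would address by noting that $|\Om_{\eps,\eta}|\le|B_R|$ (a consequence e.g. of Lemma~\ref{le:infneg} or of the boundedness of competitors), and that its placement is immaterial since both $E$ and $V_\alpha$ are translation invariant, so one may re-center it inside $B_R$.

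The second step rearranges the inequality into
\[
E(\Om_{\eps,\eta})-E(B_{\eps,\eta})\le \eps\big(V_\alpha(B_{\eps,\eta})-V_\alpha(\Om_{\eps,\eta})\big),
\]
and estimates the right-hand side via Lemma~\ref{le:knupfermuratov} applied with $F=B_{\eps,\eta}$, $\Om=\Om_{\eps,\eta}$ (the symmetric difference is contained in a bounded ball, as everything sits in $B_R$), giving
\[
V_\alpha(B_{\eps,\eta})-V_\alpha(\Om_{\eps,\eta})\le C_0|\Om_{\eps,\eta}\Delta B_{\eps,\eta}|\,\big[|\Om_{\eps,\eta}|^{\frac{\alpha}{N}}+|B_{\eps,\eta}|^{\frac{\alpha}{N}}\big]
= 2C_0|\Om_{\eps,\eta}\Delta B_{\eps,\eta}|\,|\Om_{\eps,\eta}|^{\frac{\alpha}{N}}.
\]
For the left-hand side, I would use the scale-invariant quantitative Saint-Venant inequality~\eqref{eq:quantsv}: since $B_{\eps,\eta}$ realizes the Fraenkel asymmetry of $\Om_{\eps,\eta}$, we have $\mathcal A(\Om_{\eps,\eta})=|\Om_{\eps,\eta}\Delta B_{\eps,\eta}|/|\Om_{\eps,\eta}|$, and multiplying~\eqref{eq:quantsv} through by $|\Om_{\eps,\eta}|^{1+\frac2N}$ (and noting $E(B)|B|^{-1-\frac2N}|\Om_{\eps,\eta}|^{1+\frac2N}=E(B_{\eps,\eta})$ by scaling) yields
\[
E(\Om_{\eps,\eta})-E(B_{\eps,\eta})\ge \sigma\,|\Om_{\eps,\eta}|^{1+\frac2N}\,\frac{|\Om_{\eps,\eta}\Delta B_{\eps,\eta}|^2}{|\Om_{\eps,\eta}|^2}
=\sigma\,|\Om_{\eps,\eta}|^{\frac2N-1}|\Om_{\eps,\eta}\Delta B_{\eps,\eta}|^2.
\]

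Combining the two bounds gives $\sigma\,|\Om_{\eps,\eta}|^{\frac2N-1}|\Om_{\eps,\eta}\Delta B_{\eps,\eta}|^2\le 2C_0\,\eps\,|\Om_{\eps,\eta}\Delta B_{\eps,\eta}|\,|\Om_{\eps,\eta}|^{\frac{\alpha}{N}}$, and dividing by $|\Om_{\eps,\eta}\Delta B_{\eps,\eta}|$ (the case $|\Om_{\eps,\eta}\Delta B_{\eps,\eta}|=0$ being trivial) and rearranging the powers of $|\Om_{\eps,\eta}|$ produces exactly $|\Om_{\eps,\eta}\Delta B_{\eps,\eta}|\le \frac{2C_0}{\sigma}|\Om_{\eps,\eta}|^{1+\frac{\alpha-2}{N}}\eps$. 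I do not anticipate a genuine obstacle here; the only point requiring a little care is the admissibility of the comparison ball as a competitor in~\eqref{eq:minGeta1} (its radius and center), which is where one implicitly uses that the optimal set has measure bounded in terms of $R$ — consistent with the remarks in the paper that the density estimates and comparisons depend on $R$.
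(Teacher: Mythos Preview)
Your proposal is correct and follows essentially the same route as the paper: compare $\Om_{\eps,\eta}$ with the ball $B_{\eps,\eta}$ of equal measure via minimality of $\mathcal G_{\eps,\eta}$ (so the $f_\eta$ term drops), bound the Riesz difference using Lemma~\ref{le:knupfermuratov}, and then apply the quantitative Saint-Venant inequality~\eqref{eq:quantsv} to lower-bound the torsion gap. The only point you raise that the paper leaves implicit is the admissibility of $B_{\eps,\eta}$ as a competitor in $B_R$; the paper simply writes ``up to translations'' when invoking Theorem~\ref{thm:quantitative}, since $|\Om_{\eps,\eta}|\le|B_R|$ guarantees the ball fits after translation.
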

\begin{proof}
%Without loss of generality we suppose that $B_{{\eps,\eta}}$ is centered at the origin.
Using Lemma~\ref{le:knupfermuratov} and the definition of $f_\eta$, we get \[
\begin{split}
&E(\Om_{{\eps,\eta}})-E(B_{{\eps,\eta}})\leq \eps ( V_\alpha(B_{{\eps,\eta}})- V_\alpha(\Om_{{\eps,\eta}}))+(f_\eta(|B_{{\eps,\eta}}|)-f_\eta(|\Omega_{{\eps,\eta}}|))\\
&\leq C_0\eps|B_{{\eps,\eta}}\Delta\Om_{{\eps,\eta}}|\,\Big[|\Om_{\eps,\eta}|^{\frac{\alpha}{N}}+|B_{\eps,\eta}|^{\frac{\alpha}{N}}\Big].
\end{split}
\]
On the other hand, thanks to the quantitative version of the Saint-Venant inequality (Theorem~\ref{thm:quantitative}), and since $|\Om_{\eps,\eta}|=|B_{\eps,\eta}|$, we have (up to translations) that \[
\begin{split}
\sigma\left(\frac{|\Om_{{\eps,\eta}}\Delta B_{{\eps,\eta}}|}{|\Om_{{\eps,\eta}}|}\right)^2&\leq E(\Om_{{\eps,\eta}})|\Om_{{\eps,\eta}}|^{-1-\frac{2}{N}}-E(B_{{\eps,\eta}})|B_{{\eps,\eta}}|^{-1-\frac{2}{N}}\\
&\leq 2C_0\eps|\Om_{{\eps,\eta}}|^{-1+\frac{\alpha-2}{N}}|\Om_{{\eps,\eta}}\Delta B_{{\eps,\eta}}|
\end{split}
\]
so that
 \[
{|\Om_{{\eps,\eta}}\Delta B_{{\eps,\eta}}|\leq \frac{2C_0}{\sigma}|\Om_{{\eps,\eta}}|^{1+\frac{\alpha-2}{N}}\eps,}
\]
which proves the lemma.
\end{proof}
A consequence almost immediate of the previous lemma is that  the measure of the ball $B_{{\eps,\eta}}$ is not  too large.
\begin{lemma}\label{le:L1closetoball}
Let $\alpha$ and  $R$ be as in the previous lemma. There exists $\eta_1=\eta_1(N,\alpha,R)\leq \eta_0$ such that for all $\eps\in(0,1)$ and $\eta\leq \eta_1$, we have that any optimal set for problem~\eqref{eq:minGeta1} satisfies
 \[
|\Om_{{\eps,\eta}}|\leq 2.
\]
\end{lemma}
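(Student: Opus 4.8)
The plan is to use the minimality of $\Om_{\eps,\eta}$ against the competitor $B$, a fixed ball of measure $1$ contained in $B_R$, in order to control $f_\eta(|\Om_{\eps,\eta}|)$ from above, and then to exploit the steep slope $1/\eta$ that $f_\eta$ has above the value $1$. Concretely, since $B\subset B_R$ is admissible for \eqref{eq:minGeta1}, minimality gives
\[
E(\Om_{\eps,\eta})+\eps V_\alpha(\Om_{\eps,\eta})+f_\eta(|\Om_{\eps,\eta}|)\le E(B)+\eps V_\alpha(B)+f_\eta(1)=E(B)+\eps V_\alpha(B),
\]
because $f_\eta(1)=0$. Now I would discard the (nonnegative) Riesz term on the left, and bound $E(\Om_{\eps,\eta})$ from below: either directly via $E(\Om_{\eps,\eta})\ge E(B_R)=\omega_N^{-(N+2)/N}E(B)R^{N+2}$ using monotonicity and scaling, or — cleaner — via Lemma~\ref{le:infneg}, which (for $\eta\le\eta_0$, $\eps\le\eps_0$) already tells us $E(\Om_{\eps,\eta})\le E(B)/4<0$, so at worst $E(\Om_{\eps,\eta})\ge E(B_R)$ anyway. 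The upshot is a bound of the form
\[
f_\eta(|\Om_{\eps,\eta}|)\le E(B)+\eps V_\alpha(B)-E(B_R)=:C_1(N,\alpha,R),
\]
a constant depending only on $N,\alpha,R$ (here I used $\eps<1$ to absorb $\eps V_\alpha(B)\le V_\alpha(B)$).

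Next I would split into the two cases for $|\Om_{\eps,\eta}|$. If $|\Om_{\eps,\eta}|\le 1$ there is nothing to prove since $1\le 2$. If $|\Om_{\eps,\eta}|>1$, then by definition $f_\eta(|\Om_{\eps,\eta}|)=\frac1\eta(|\Om_{\eps,\eta}|-1)$, so the bound above rearranges to
\[
|\Om_{\eps,\eta}|\le 1+\eta\,C_1(N,\alpha,R).
\]
Choosing $\eta_1=\eta_1(N,\alpha,R):=\min\{\eta_0,\ 1/C_1(N,\alpha,R)\}$ forces $|\Om_{\eps,\eta}|\le 2$ for every $\eta\le\eta_1$ and every $\eps\in(0,1)$, which is exactly the claim. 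Note this argument does not even use Lemma~\ref{le:stimadiffsimm}; the reference to "a consequence of the previous lemma" presumably refers to the alternative route below.

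An alternative (the one the phrasing of the statement hints at) is to feed the conclusion of Lemma~\ref{le:stimadiffsimm} back in: since $B_{\eps,\eta}$ has measure $|\Om_{\eps,\eta}|$, we have $|\Om_{\eps,\eta}|\ge |\Om_{\eps,\eta}\cap B_{\eps,\eta}|=|\Om_{\eps,\eta}|-\tfrac12|\Om_{\eps,\eta}\Delta B_{\eps,\eta}|$, which is vacuous, so instead one argues: if $|\Om_{\eps,\eta}|$ were large, \eqref{eq:bounddiffsimm} would say $|\Om_{\eps,\eta}\Delta B_{\eps,\eta}|\le \tfrac{2C_0}{\sigma}|\Om_{\eps,\eta}|^{1+\frac{\alpha-2}{N}}\eps$ is small relative to $|\Om_{\eps,\eta}|$ (when $\alpha<2$, as $\eps<1$), hence $\Om_{\eps,\eta}$ is $L^1$-close to a huge ball, forcing most of it to lie in $B_R$ — impossible once $|B_{\eps,\eta}|>|B_R|$. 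Either way the mechanism is the same: the penalization $f_\eta$, combined with the uniform lower bound on $E$ over $B_R$, caps the measure. The only mildly delicate point is making sure all constants genuinely depend on $N,\alpha,R$ only and not on $\eta$ or $\eps$; with the first (direct) argument this is transparent, since $C_1$ is manifestly $\eta$- and $\eps$-independent after using $\eps<1$. I expect no real obstacle here — this is a short bookkeeping lemma — and I would write it via the direct route, mentioning Lemma~\ref{le:stimadiffsimm} only if one wants the sharper dependence.
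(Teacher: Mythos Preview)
Your direct argument is correct and is essentially the same as the paper's: compare $\Om_{\eps,\eta}$ against the unit ball $B$, drop the nonnegative Riesz term, bound $E(\Om_{\eps,\eta})\ge E(B_R)$, and use the slope $1/\eta$ of $f_\eta$ above $1$ to cap the measure. The paper phrases it as a contradiction (assume $|\Om_{\eps,\eta}|>2$) and routes the torsion lower bound through Saint--Venant via $E(\Om_{\eps,\eta})\ge E(B_{\eps,\eta})\ge E(B_R)$, whereas you go directly by monotonicity; you are also right that Lemma~\ref{le:stimadiffsimm} is not actually used in the argument, despite the introductory sentence.
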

\begin{proof}
Of course the statement of the lemma is trivial as long as $|B_R|\le2$. Thus we take $R$ large enough so that $|B_R|>2$. 
Let us suppose for the sake of contradiction that $|\Om_{\eps,\eta}|>2$. We are then going to reach a contradiction as long as\[
1/\eta\ge C_a(N,\alpha)R^{N+2}+C_b(N,\alpha),
\] 
for  given constants $C_a(N,\alpha)$ and $C_b(N,\alpha)$ which will be precised later on in the proof. Since the functional\[
\eps\mapsto \mathcal G_{\eps,\eta}(\Om_{{\eps,\eta}}),
\]
is nondecreasing, we get  \[
\sup_{\eps\in(0,1)}\mathcal G_{\eps,\eta}(\Om_{{\eps,\eta}})=\mathcal G_{1,\eta}(\Om_{1,\eta})\leq E(B)+ V_\alpha(B),
\]
where $B$ is a ball of unit measure.
On the other hand, using the Saint-Venant inequality, the positivity of $ V_\alpha$, the fact that $\Om_{{\eps,\eta}}\subset B_R$ and since $|\Om_{{\eps,\eta}}|>2$ we have
\[
E(B)+ V_\alpha(B)\geq \mathcal G_{\eps,\eta}(\Om_{{\eps,\eta}})\geq E(B_{{\eps,\eta}})+\frac{1}{\eta}(|\Om_{{\eps,\eta}}|-1)\geq \omega_N^{\frac{N+2}{N}} R^{N+2}E(B)+\frac{1}{\eta}.
\]
By letting $C_a(N,\alpha)= (-E(B))\omega_N^{\frac{N+2}{N}}$ and $C_b(N,\alpha)=E(B)+ V_\alpha(B)$, and by choosing $\eta_1=\eta_1(N,\alpha,R)$ such that  $\eta_1\leq \eta_0$ and
\[
\frac{1}{\eta_1}>E(B)+ V_\alpha(B)+(-E(B))\omega_N^{\frac{N+2}{N}} R^{N+2},
\]
we reach the desired contradiction.
\end{proof}
We note that in the above lemma, $\eta_1$ depends on $R$ and in particular $\eta_1\approx \frac{1}{R^{N+2}}$.

\begin{corollary}\label{cor:vicinanzaL1}
In the assumptions of Lemma~\ref{le:stimadiffsimm}, there exists a positive constant $c_1=c_1(N,\alpha)$ such that,  for all $\eps\in(0,1)$ and $\eta\leq \eta_1$, we have 
\begin{equation}\label{eq:stimaL1}
|\Om_{{\eps,\eta}}|\leq 2,\qquad |\Om_{{\eps,\eta}}\Delta B_{{\eps,\eta}}|\leq c_1\eps.
\end{equation}  
\end{corollary}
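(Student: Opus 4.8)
The plan is to combine the two preceding results, Lemma~\ref{le:stimadiffsimm} and Lemma~\ref{le:L1closetoball}, in a completely elementary way. Lemma~\ref{le:L1closetoball} already gives the first inequality in~\eqref{eq:stimaL1}, namely $|\Om_{{\eps,\eta}}|\leq 2$, provided $\eta\leq \eta_1$ (note that $\eta_1\leq\eta_0$, so we are in the regime where the earlier lemmata apply). So the only thing left is the quantitative bound $|\Om_{{\eps,\eta}}\Delta B_{{\eps,\eta}}|\leq c_1\eps$.

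For that I would simply feed the measure bound $|\Om_{\eps,\eta}|\leq 2$ into the estimate~\eqref{eq:bounddiffsimm} from Lemma~\ref{le:stimadiffsimm}, which reads
\[
|\Om_{{\eps,\eta}}\Delta B_{{\eps,\eta}}|\leq \frac{2C_0}{\sigma}|\Om_{{\eps,\eta}}|^{1+\frac{\alpha-2}{N}}\eps.
\]
Since $\alpha\in(0,N)$ we have $1+\frac{\alpha-2}{N}\in(1-\tfrac2N,1)\subset(0,1)$, hence the exponent is positive and $|\Om_{\eps,\eta}|^{1+\frac{\alpha-2}{N}}\leq 2^{1+\frac{\alpha-2}{N}}\leq 2$. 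Therefore one may take
\[
c_1:=c_1(N,\alpha)=\frac{2C_0}{\sigma}\cdot 2^{1+\frac{\alpha-2}{N}}\le \frac{4C_0}{\sigma},
\]
which depends only on $N$ and $\alpha$ through the constant $C_0=C_0(N,\alpha)$ of Lemma~\ref{le:knupfermuratov} and the geometric constant $\sigma=\sigma(N)$ of the quantitative Saint-Venant inequality; in particular it does not depend on $R$ or $\eta$, as claimed.

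There is essentially no obstacle here: the content is entirely in the two lemmata already proved, and the corollary is just a bookkeeping statement recording the shape of the constants (and, crucially, the fact that $c_1$ is $R$-independent, which will matter later). The only mild point to be careful about is to check that $\eta_1\leq\eta_0$ so that Lemma~\ref{le:stimadiffsimm} (whose hypotheses only require $\eps,\eta\in(0,1)$) and Lemma~\ref{le:L1closetoball} can be invoked simultaneously for the same optimal set $\Om_{\eps,\eta}$, and that the ball $B_{\eps,\eta}$ realizing the asymmetry in Lemma~\ref{le:stimadiffsimm} is the same one appearing in the statement of the corollary; both are immediate from the definitions.
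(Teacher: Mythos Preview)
Your proposal is correct and follows exactly the same route as the paper, which simply records that the corollary is a direct consequence of Lemmas~\ref{le:L1closetoball} and~\ref{le:stimadiffsimm}. One small slip: for $\alpha>2$ the exponent $1+\tfrac{\alpha-2}{N}$ exceeds $1$, so the side remark ``$\subset(0,1)$'' and the bound $2^{1+\frac{\alpha-2}{N}}\le 2$ are false in that range, but this is harmless since your actual choice $c_1=\tfrac{2C_0}{\sigma}\cdot 2^{1+\frac{\alpha-2}{N}}$ is valid and depends only on $N,\alpha$.
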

\begin{proof}
It is a direct consequence of Lemmas~\ref{le:L1closetoball} and  \ref{le:stimadiffsimm}.
\end{proof}
Next we show that, for $\eps$ small, the boundary of any  optimizer $\Om_{{\eps,\eta}}$ is close to the one of the corresponding optimal ball $B_{{\eps,\eta}}$ in the definition of asymmetry, with respect to the Hausdorff distance $d_H$ (see \cite[Definition 4.4.9]{amti} for the definition and properties of the Hausdorff distance).
\begin{lemma}\label{le:closehausdorff}
Under the assumptions of Corollary~\ref{cor:vicinanzaL1}, for all $\delta>0$ there exists $\eps_\delta=\eps_\delta(\delta, N, \alpha, R)\in(0,\eps_0)$ such that for all $\eps\leq \eps_\delta$, we have \[
{\rm dist}_H(\partial \Om_{{\eps,\eta}},\partial B_{{\eps,\eta}})\leq \delta.
\] 
\end{lemma}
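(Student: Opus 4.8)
The plan is to combine the quantitative $L^1$ bound of Corollary~\ref{cor:vicinanzaL1} with the uniform density estimates of Lemma~\ref{le:lemma4.11} to upgrade $L^1$-closeness to Hausdorff-closeness. The key structural fact is that for sets enjoying two-sided density estimates at every boundary point (with uniform constants $\theta$ and radius scale $\rho_0$), a small symmetric difference forces a small Hausdorff distance of the boundaries — this is a by-now classical ``$L^1 \Rightarrow L^\infty$'' type improvement.

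First I would argue by contradiction: suppose there is $\delta>0$ and a sequence $\eps_j \to 0$ such that ${\rm dist}_H(\partial \Om_{\eps_j,\eta}, \partial B_{\eps_j,\eta}) > \delta$. By Corollary~\ref{cor:vicinanzaL1} we have $|\Om_{\eps_j,\eta}|\leq 2$ and $|\Om_{\eps_j,\eta}\Delta B_{\eps_j,\eta}| \leq c_1\eps_j \to 0$; in particular the radii $r_j$ of $B_{\eps_j,\eta}$ stay in a compact subset of $(0,+\infty)$ (bounded above since $|B_{\eps_j,\eta}|\le 2$, and bounded below because otherwise $E(\Om_{\eps_j,\eta})$ would be driven towards $0$, contradicting Lemma~\ref{le:infneg} which gives $E(\Om_{\eps_j,\eta})\le E(B)/4<0$). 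The Hausdorff distance being large means one of two things (up to subsequence): either there is a point $x_j \in \partial \Om_{\eps_j,\eta}$ at distance $>\delta$ from $\partial B_{\eps_j,\eta}$, or a point $y_j \in \partial B_{\eps_j,\eta}$ at distance $>\delta$ from $\partial \Om_{\eps_j,\eta}$.

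In the first case, $B_{\delta/2}(x_j)$ lies entirely inside $B_{\eps_j,\eta}$ or entirely in its complement. If $B_{\delta/2}(x_j)\subset B_{\eps_j,\eta}$, then since $x_j\in\partial\Om_{\eps_j,\eta}$ the outer density estimate $|\Om_{\eps_j,\eta}\cap B_\rho(x_j)|\le (1-\theta)|B_\rho|$ (for $\rho\le\min\{\rho_0,\delta/2\}$) gives $|B_{\eps_j,\eta}\setminus \Om_{\eps_j,\eta}|\ge |B_\rho(x_j)\setminus\Om_{\eps_j,\eta}|\ge \theta|B_\rho| =: c(\delta)>0$, contradicting $|\Om_{\eps_j,\eta}\Delta B_{\eps_j,\eta}|\to 0$; if instead $B_{\delta/2}(x_j)\subset \R^N\setminus B_{\eps_j,\eta}$, the inner density estimate $|\Om_{\eps_j,\eta}\cap B_\rho(x_j)|\ge \theta|B_\rho|$ gives $|\Om_{\eps_j,\eta}\setminus B_{\eps_j,\eta}|\ge \theta|B_\rho|$, again a contradiction. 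The second case is symmetric but uses the regularity of the ball instead: if $B_{\delta/2}(y_j)\subset\Om_{\eps_j,\eta}$ then $|\Om_{\eps_j,\eta}\setminus B_{\eps_j,\eta}|\ge |B_{\delta/2}(y_j)\setminus B_{\eps_j,\eta}|$, and since $y_j\in\partial B_{\eps_j,\eta}$ and balls have the obvious two-sided density bound with a dimensional constant, this lower bound is a positive constant depending only on $N$ and $\delta$ (using also the uniform lower bound on $r_j$); the alternative $B_{\delta/2}(y_j)\subset\R^N\setminus\Om_{\eps_j,\eta}$ is handled the same way with the roles of the two sets swapped. In all cases we contradict $|\Om_{\eps_j,\eta}\Delta B_{\eps_j,\eta}|\to 0$, which proves the lemma.

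The main obstacle — and the only genuinely delicate point — is making sure the density constants $\theta,\rho_0$ of Lemma~\ref{le:lemma4.11} are used with the correct uniformity: they depend on $N,\alpha,R,\eta$ but crucially \emph{not} on $\eps$, which is exactly what a contradiction argument along $\eps_j\to 0$ requires. One must also be slightly careful that the comparison radius $\rho$ is taken below both $\rho_0$ and $\delta/2$, and that $\rho_0$ does not shrink along the sequence; since $\eta$ is fixed throughout this lemma, this is fine. The compactness of the radii $r_j$, needed to make the ``a ball of radius $r_j$ contains a fixed proportion of $B_\rho$'' estimate uniform, follows from $|\Om_{\eps_j,\eta}|\le 2$ together with Lemma~\ref{le:infneg}, as indicated above.
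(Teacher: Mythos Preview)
Your proof is correct and follows essentially the same approach as the paper: combine the $L^1$ bound $|\Om_{\eps,\eta}\Delta B_{\eps,\eta}|\le c_1\eps$ from Corollary~\ref{cor:vicinanzaL1} with the two-sided density estimates of Lemma~\ref{le:lemma4.11} (whose constants $\theta,\rho_0$ are uniform in $\eps$) to upgrade $L^1$-closeness to Hausdorff-closeness. The paper argues directly and quantitatively --- obtaining the explicit threshold $\eps_\delta=\omega_N\theta\rho_1^N/c_1$ with $\rho_1=\min\{\rho_0,\delta\}$ --- whereas you run a contradiction argument along a sequence; you are also more explicit about the case of a point of $\partial B_{\eps,\eta}$ far from $\partial\Om_{\eps,\eta}$ and about the uniform bounds on the radii $r_j$, but these are stylistic differences rather than a different strategy.
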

\begin{proof}
By~\eqref{eq:stimaL1} we have that $|\Om_{{\eps,\eta}}\setminus B_{{\eps,\eta}}|\leq c_1\eps$.
We fix $\delta>0$ and call $B_\delta(B_{{\eps,\eta}}):=B_{{\eps,\eta}}+B_\delta$ the $\delta$-neighborhood of $B_{{\eps,\eta}}$.
If $\Om_{{\eps,\eta}}\setminus B_\delta(B_{{\eps,\eta}})$ is empty, then there is nothing to prove. Otherwise there exists $x\in \Om_{{\eps,\eta}}\setminus B_\delta(B_{{\eps,\eta}})$ so that by point $(ii)$ of Lemma~\ref{le:lemma4.11} there exists $\rho_0(N,R,\alpha)$ such that for   $\rho\leq \rho_1:=\min\{\rho_0(N,R,\alpha),\delta\}$ it holds
 \[
{\omega_N \theta}\rho^N\leq |B_\rho(x)\cap \Om_{{\eps,\eta}}|\leq |\Om_{{\eps,\eta}}\setminus B_{{\eps,\eta}}|\leq c_1\eps.
\] 
Notice that the choice of $\rho_1\leq \delta$ assures that $|B_\rho(x)\cap \Om_{{\eps,\eta}}|\leq |\Om_{{\eps,\eta}}\setminus B_{{\eps,\eta}}|$. In conclusion choosing $\rho=\rho_1$, we have \[
{\omega_N\theta}\rho_1^N\leq c_1\eps,
\]
which is not possible as soon as \[
\eps\leq \eps_\delta:=\frac{{\omega_N\theta}}{c_1}\rho_1^N.
\]
With the same argument, by using the density estimates for the exterior of $\Om_{\eps,\eta}$, we show that  $B_{{\eps,\eta}}\subset B_\delta(\Om_{{\eps,\eta}})$ where $B_\delta(\Om_{{\eps,\eta}}):=\Om_{{\eps,\eta}}+B_\delta$. This concludes the proof.
\end{proof}
It is worth noting that the constant $\eps_\delta$ in the lemma above depends also on $R$. This is one of the main difficulties in trying to get rid of the equiboundedness assumption of Theorem~\ref{thm:mainvero2}.

\begin{remark}\label{rmk:closehausdorff}
In view of the {previous} result, we fix $\eps_1(N,\alpha, R)$ as the $\eps_\delta$ from Lemma~\ref{le:closehausdorff} with the choice of $\delta:=1/2$.

If $\eps\leq \eps_1$, then in the proof of Theorem~\ref{thm:noconstraint}, we will be allowed to inflate a set while remaining in a sufficiently big ball $B_R$.
\end{remark}

We can now show the equivalence between the constrained and the unconstrained problems. We will use the following elementary lemma.
\begin{lemma}\label{yawn}
Let $\alpha\in[0,N]$, $P,Q>0$ be two positive real numbers and let $u:[0,1)\to\R$ be the function defined by
\[
u(t)=\frac{P(1-t^{N+2})-Q(1-t^{N+\alpha})}{1-t^N}.
\]
Then there exists $q=q(N,\alpha,P)>0$ and $C=C(N,\alpha,P)>0$ such that $\inf_{[0,1)}u \ge C(N,\alpha)$ for any $Q<q$. 
\end{lemma}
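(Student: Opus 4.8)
The plan is to analyze the function
\[
u(t)=\frac{P(1-t^{N+2})-Q(1-t^{N+\alpha})}{1-t^N}
\]
on $[0,1)$ by splitting the interval into a region near the right endpoint $t=1$ and the complementary compact region $[0,1-\tau]$ for a suitable $\tau\in(0,1)$ depending only on $N,\alpha,P$. The guiding idea is that the numerator at $t=1$ vanishes to first order with derivative governed by $P(N+2)-Q(N+\alpha)$, which is positive and bounded away from zero once $Q$ is small; so near $t=1$ the quotient is close to $\frac{P(N+2)-Q(N+\alpha)}{N}\approx \frac{P(N+2)}{N}$, while away from $t=1$ the denominator $1-t^N$ is bounded below and the numerator is controlled.

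First I would treat the endpoint region. Write $g(t):=P(1-t^{N+2})-Q(1-t^{N+\alpha})$ and $h(t):=1-t^N$, both vanishing at $t=1$. By the Cauchy mean value theorem, for each $t\in[0,1)$ there is $\xi\in(t,1)$ with
\[
u(t)=\frac{g(t)-g(1)}{h(t)-h(1)}=\frac{g'(\xi)}{h'(\xi)}=\frac{P(N+2)\xi^{N+1}-Q(N+\alpha)\xi^{N+\alpha-1}}{N\xi^{N-1}}=\frac{P(N+2)\xi^{2}-Q(N+\alpha)\xi^{\alpha}}{N}.
\]
Since $0\le\xi<1$ and $\alpha\in[0,N]$, we have $\xi^\alpha\le 1$, hence for $\xi\ge\tfrac12$,
\[
u(t)\ge \frac{P(N+2)\xi^2 - Q(N+\alpha)}{N}\ge \frac{\frac14 P(N+2)-Q(N+\alpha)}{N}.
\]
Thus if $t\ge 1/2$, then $\xi>1/2$, and choosing $q_1:=\frac{P(N+2)}{8(N+\alpha)}$ guarantees $u(t)\ge \frac{P(N+2)}{8N}$ for all such $Q<q_1$ and all $t\in[1/2,1)$.

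Next I would treat the compact region $t\in[0,1/2]$. Here $h(t)=1-t^N\in[1-2^{-N},1]$ is bounded below by a positive constant depending only on $N$. For the numerator, $1-t^{N+2}\ge 1-2^{-(N+2)}\ge \tfrac34$ while $1-t^{N+\alpha}\le 1$, so $g(t)\ge \tfrac34 P - Q$. Choosing $q_2:=P/4$ gives $g(t)\ge P/2$ on $[0,1/2]$ whenever $Q<q_2$, whence $u(t)\ge \frac{P/2}{1}=P/2$ there. Setting $q:=\min\{q_1,q_2\}$ and $C:=\min\{\frac{P(N+2)}{8N},\frac{P}{2}\}$, both depending only on $N,\alpha,P$, yields $\inf_{[0,1)}u\ge C$ for all $Q<q$, which is the claim. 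The only mild subtlety — hardly an obstacle — is making sure the mean-value argument correctly handles $t$ close to $1$ and that the bound $\xi^\alpha\le 1$ is used in the right direction; everything else is elementary estimation.
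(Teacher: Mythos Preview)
Your proof is correct. The paper takes a different and somewhat shorter route: it writes $u(t)=Pf(t)-Qg(t)$ with $f(t)=\frac{1-t^{N+2}}{1-t^N}$ and $g(t)=\frac{1-t^{N+\alpha}}{1-t^N}$, observes that both extend continuously to $[0,1]$ (with $f(1)=\frac{N+2}{N}$ and $g(1)=\frac{N+\alpha}{N}$), and then appeals to compactness to get $m_f:=\min_{[0,1]}f>0$ and $M_g:=\max_{[0,1]}g<\infty$, whence $u(t)\ge Pm_f-QM_g\ge Pm_f/2$ once $Q<Pm_f/(2M_g)$. Your argument, by contrast, avoids the continuous-extension/compactness step entirely: the Cauchy mean value theorem handles the behavior near $t=1$ explicitly, and elementary bounds cover $[0,1/2]$. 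The payoff of your approach is that the constants $q$ and $C$ are fully explicit in $N,\alpha,P$, while the paper's proof is more concise but leaves $m_f$ and $M_g$ implicit.
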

\begin{proof}
Let us write $u(t)=Pf(t)-Qg(t)$ where
\[
f(t)=\frac{1-t^{N+2}}{1-t^N}\qquad g(t)=\frac{1-t^{N+\alpha}}{1-t^N}.
\]
Both $f$ and $g$ can be extended by continuity in $1$ with the values, respectively, of $f(1)=\frac{N+2}{N}$ and $g(1)=\frac{N+\alpha}{N}$. Since such extensions are continuous and strictly positive on $[0,1]$, they admit strictly positive minimum and maximum in there. Let $m_f=\min_{[0,1]} f>0$ and $M_g=\max_{[0,1]}g$. Then we get, for any $t\in[0,1)$, that
\[
u(t)=Pf(t)-Qg(t) \ge Pm_f -Q M_g.
\]
We conclude the proof by observing that, as long as 
\[
Q<\frac{Pm_f}{2M_g}:=q,
\]
we have, for all $t\in[0,1)$, \[
u(t)\geq Pm_f -Q M_g\geq Pm_f -q M_g=\frac{P m_f}{2}=:C(N,\alpha,P),
\]
and the claim is proved.
\end{proof}
%}

\begin{theorem}\label{thm:noconstraint}
Let $\alpha\in (0,N)$ and $B$ be a ball of unit measure. There exists $R_0=R_0(N)$ such that, for all $R\geq R_0$, there exists $ \eps_2= \eps_2(N,\alpha,R)\leq \eps_1$ and $\eta_2= \eta_2(N,\alpha,R)\leq \eta_1$ such that, for all $\eta\leq \eta_2$ and $\eps\leq \eps_2$, we have that 
\begin{equation}\label{eq:nocontraint}
\begin{aligned}
&\min\left\{\mathcal G_{\eps,\eta}(\Om) : \Om\subset B_R \right\}
\\
&\ge\inf\left\{\mathcal F_{\alpha,\eps}(\Om) : \Om\subset B_R,\;  |\Om|=1\right\}=:\mu(N,\alpha,\eps, R). 
\end{aligned}
\end{equation}
As a consequence, problems \eqref{eq:main} and \eqref{eq:minGeta1} are equivalent.
\end{theorem}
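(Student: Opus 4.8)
The plan is to show that every minimizer $\Om_{\eps,\eta}$ of $\mathcal G_{\eps,\eta}$ over (quasi-open) subsets of $B_R$, which exists by Lemma~\ref{le:existminG}, satisfies $|\Om_{\eps,\eta}|=1$. Granting this, since $f_\eta(1)=0$ we get $\mathcal G_{\eps,\eta}(\Om_{\eps,\eta})=\mathcal F_{\alpha,\eps}(\Om_{\eps,\eta})\ge\mu(N,\alpha,\eps,R)$, which is exactly~\eqref{eq:nocontraint}. For the asserted equivalence, observe that any competitor $A$ for~\eqref{eq:main} has $|A|=1$, hence $\mathcal G_{\eps,\eta}(A)=\mathcal F_{\alpha,\eps}(A)$, so that $\min\mathcal G_{\eps,\eta}\le\mu$ as well; therefore $\min\mathcal G_{\eps,\eta}=\mu$, $\Om_{\eps,\eta}$ solves~\eqref{eq:main}, and conversely any solution of~\eqref{eq:main} solves~\eqref{eq:minGeta1}. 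Throughout we use the a priori information valid once $\eta\le\eta_1$ and $\eps\le\eps_1$: by Corollary~\ref{cor:vicinanzaL1} and Lemma~\ref{le:infneg} (combined with the scale-invariant Saint--Venant inequality~\eqref{eq:saintvenant}, which turns $E(\Om_{\eps,\eta})\le E(B)/4$ into $|\Om_{\eps,\eta}|^{1+2/N}\ge 1/4$) one has $c_N\le|\Om_{\eps,\eta}|\le 2$ for a dimensional $c_N>0$; and by Lemma~\ref{le:closehausdorff} with $\delta=1/2$, up to a translation (which leaves $\mathcal G_{\eps,\eta}$ unchanged) the set $\Om_{\eps,\eta}$ lies in a ball of dimensional radius. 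The thresholds $\eps_2\le\eps_1,\ \eta_2\le\eta_1$ will be shrunk further below, only in terms of $N,\alpha,R$.

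To rule out $|\Om_{\eps,\eta}|>1$, test minimality against $t\Om_{\eps,\eta}$ with $t:=|\Om_{\eps,\eta}|^{-1/N}<1$: then $t\Om_{\eps,\eta}\subset B_{tR}\subset B_R$ and $|t\Om_{\eps,\eta}|=1$, so $f_\eta$ vanishes on the competitor. Writing $P:=-E(\Om_{\eps,\eta})>0$, $Q:=\eps V_\alpha(\Om_{\eps,\eta})\ge 0$ and using the scaling laws of $E,V_\alpha$ together with $|\Om_{\eps,\eta}|-1=(1-t^N)/t^N$, the inequality $\mathcal G_{\eps,\eta}(t\Om_{\eps,\eta})\ge\mathcal G_{\eps,\eta}(\Om_{\eps,\eta})$ rearranges, after dividing by $1-t^N>0$, into
\[
\frac{P(1-t^{N+2})-Q(1-t^{N+\alpha})}{1-t^N}\ \ge\ \frac{1}{\eta\,t^N}\ \ge\ \frac1\eta.
\]
The left-hand side is the function $u$ of Lemma~\ref{yawn}, and since $Q\ge0$ it is bounded above by $P\,M_f$ with $M_f:=\max_{[0,1]}\frac{1-s^{N+2}}{1-s^N}<\infty$; by monotonicity of $E$, $P=-E(\Om_{\eps,\eta})\le -E(B_R)=C(N)R^{N+2}$. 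Hence $1/\eta\le C(N)M_fR^{N+2}$, which is absurd as soon as $\eta<\eta_2\le (C(N)M_fR^{N+2})^{-1}$.

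For the opposite inequality, assume $|\Om_{\eps,\eta}|<1$ and take $t:=|\Om_{\eps,\eta}|^{-1/N}>1$, which is bounded above by a dimensional constant since $|\Om_{\eps,\eta}|\ge c_N$. By Remark~\ref{rmk:closehausdorff}, i.e.\ choosing $R_0=R_0(N)$ large enough and translating $\Om_{\eps,\eta}$ into a ball of dimensional radius, the inflated set still satisfies $t\Om_{\eps,\eta}\subset B_R$, and $|t\Om_{\eps,\eta}|=1$. Setting $s:=t^{-1}=|\Om_{\eps,\eta}|^{1/N}\in(0,1)$, the minimality inequality $\mathcal G_{\eps,\eta}(t\Om_{\eps,\eta})\ge\mathcal G_{\eps,\eta}(\Om_{\eps,\eta})$ becomes, after multiplying by $s^{N+2}$ and dividing by $1-s^N>0$,
\[
\frac{P(1-s^{N+2})-\widetilde Q(1-s^{N+\alpha})}{1-s^N}\ \le\ \eta\, s^{N+2}\ \le\ \eta,
\]
with $P=-E(\Om_{\eps,\eta})$ and $\widetilde Q:=\eps\,s^{2-\alpha}V_\alpha(\Om_{\eps,\eta})$; since $s$ is bounded below, $\widetilde Q\le c(N,\alpha)\,\eps\,V_\alpha(B_R)$. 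By Lemma~\ref{le:infneg} one has $P\ge -E(B)/4>0$, uniformly, so Lemma~\ref{yawn} applies: choosing $\eps\le\eps_2(N,\alpha,R)$ small enough that $\widetilde Q<q(N,\alpha,-E(B)/4)$ forces the left-hand side to be $\ge C(N,\alpha,-E(B)/4)>0$, contradicting the display as soon as $\eta<\eta_2\le C(N,\alpha,-E(B)/4)$. Thus $|\Om_{\eps,\eta}|\ge 1$, and together with the previous step $|\Om_{\eps,\eta}|=1$, which finishes the proof.

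I expect the case $|\Om_{\eps,\eta}|<1$ to be the delicate point. Comparing with a rescaling is not enough by itself: one needs the a priori fact that the torsion energy of the minimizer stays bounded away from $0$ (Lemma~\ref{le:infneg}), so that inflating the set strictly decreases the energy, and one must guarantee that the inflated set does not escape $B_R$. The latter is exactly where the choice of $R_0$ and the Hausdorff proximity of Lemma~\ref{le:closehausdorff} are used, and it is the precise spot at which the equiboundedness hypothesis of Theorem~\ref{thm:mainvero2} enters.
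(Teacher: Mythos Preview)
Your proof is correct and follows essentially the same route as the paper's: argue by contradiction, compare the minimizer $\Om_{\eps,\eta}$ against its unit-measure rescaling, and treat the cases $|\Om_{\eps,\eta}|>1$ and $|\Om_{\eps,\eta}|<1$ separately, with Lemma~\ref{le:infneg} providing the uniform lower bound on $-E(\Om_{\eps,\eta})$, Lemma~\ref{yawn} controlling the scaling quotient in the delicate case $|\Om_{\eps,\eta}|<1$, and Lemma~\ref{le:closehausdorff} together with the choice of $R_0$ ensuring the inflated set stays inside $B_R$. Your treatment of the case $|\Om_{\eps,\eta}|>1$ is in fact a bit cleaner than the paper's (which uses a first-order Taylor expansion in $\sigma_{\eps,\eta}=|\Om_{\eps,\eta}|-1$ rather than bounding the exact scaling quotient), but the overall strategy and all the key inputs coincide.
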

\begin{proof}
It is easy to check that \[
\begin{split}
\min\left\{\mathcal G_{\eps,\eta}(\Om) : \Om\subset B_R\right\}
\le\inf\left\{\mathcal F_{\alpha,\eps}(\Om) : \Om\subset B_R,\;  |\Om|=1\right\},
\end{split}
\]
as the two functionals coincide on sets of measure $1$. 
Then, if the first claim of the theorem holds, it follows that on the set of minimizers (of the first or of the second problem) the two functionals do coincide, that is, problems \eqref{eq:main} and \eqref{eq:minGeta1} are equivalent.

We prove the first claim of the theorem  by contradiction. Let 
 \[
\Om_{{\eps,\eta}}\subset B_R,\quad \sigma_{{\eps,\eta}}\in\R,\quad |\Om_{{\eps,\eta}}|=1+\sigma_{{\eps,\eta}},\quad \mathcal G_{\eps,\eta}(\Om_{{\eps,\eta}})<\mu,
\]
and we also note that, since for all $\Om\subset B_R$ it holds $\mathcal F_{\alpha,\eps}(\Om)\leq \eps V_\alpha(B),$ then $\mu\leq \eps V_\alpha(B)$.
We moreover assume, without loss of generality, that $\Om_{{\eps,\eta}}$ are minimizers for problem~\eqref{eq:minGeta1}.
We treat separately the case $\sigma_{{\eps,\eta}}>0$ and $\sigma_{{\eps,\eta}}<0$.

\par
{\bf Case $\sigma_{{\eps,\eta}}>0$. }
We first observe that $\sigma_{{\eps,\eta}}\to0$ as $\eta\to0$. Indeed
\[
\G(\Om_{{\eps,\eta}})=\mathcal F_{\alpha,\eps}(\Om_{{\eps,\eta}})+\frac{1}{\eta}\sigma_{{\eps,\eta}}
\]
and so 
\[
0\le\frac{1}{\eta}\sigma_{{\eps,\eta}}= \mathcal G_{\eps\eta}(\Om_{\eps,\eta})-\mathcal F_{\alpha,\eps}(\Omega_{{\eps,\eta}})\le \eps V_\alpha(B) -E(B_R),
\]
using the assumption $\mathcal G_{\eps,\eta}(\Om_{\eps,\eta})\leq \mu\leq \eps V_\alpha(B)$, the positivity of $ V_\alpha$ and the fact that the torsion energy is decreasing by inclusion. This implies that $\sigma_{{\eps,\eta}}\to0$ as $\eta\to0$.

Let now $\lambda_{{\eps,\eta}}<1$ be such that $|\lambda_{{\eps,\eta}}\Om_{{\eps,\eta}}|=1$, therefore 
\[
 \lambda_{{\eps,\eta}}=1-\sigma_{{\eps,\eta}}\frac{1}{N(1+\sigma_{{\eps,\eta}})}+C\sigma_{{\eps,\eta}}^2,
\]
for some  $C=C(N)\in \R$.
Since the new set $\lambda_{{\eps,\eta}}\Om_{{\eps,\eta}}$ is now admissible in the constrained minimization problem~\eqref{eq:main}, and since
 \[
\begin{split}
&\G(\Om_{\eps,\eta})=E(\Om_{{\eps,\eta}})+\eps  V_\alpha(\Om_{{\eps,\eta}})+\frac{\sigma_{{\eps,\eta}}}{\eta}\\
&<\mu\leq E(\Om_{{\eps,\eta}})\lambda_{{\eps,\eta}}^{N+2}+\eps  V_\alpha(\Om_{{\eps,\eta}})\lambda_{{\eps,\eta}}^{N+\alpha}\\
&=E(\Om_{{\eps,\eta}})\left(1-\sigma_{{\eps,\eta}}\frac{N+2}{N(1+\sigma_{{\eps,\eta}})}+C\sigma_{{\eps,\eta}}^2\right)+\eps  V_\alpha(\Om_{{\eps,\eta}})\left(1-\sigma_{{\eps,\eta}}\frac{N+\alpha}{N(1+\sigma_{{\eps,\eta}})}+C\sigma_{{\eps,\eta}}^2\right),
\end{split}
\]
we deduce that
\[
\begin{aligned}
\frac{\sigma_{{\eps,\eta}}}{\eta}&<(-E(\Om_{{\eps,\eta}}))\sigma_{{\eps,\eta}}\frac{N+2}{N(1+\sigma_{{\eps,\eta}})}-\eps  V_\alpha(\Om_{{\eps,\eta}})\sigma_{{\eps,\eta}}\frac{N+\alpha}{N(1+\sigma_{{\eps,\eta}})}+C\sigma_{{\eps,\eta}}^2\mathcal F_{\alpha,\eps}(\Om_{{\eps,\eta}})\\
&\le (-E(\Om_{{\eps,\eta}}))\sigma_{{\eps,\eta}}\frac{N+2}{N(1+\sigma_{{\eps,\eta}})}+C\sigma_{{\eps,\eta}}^2\eps V_\alpha(B),
\end{aligned}
\]
where we have again used the fact that $\eps V_\alpha(B)$ bounds from above the functional $\mathcal F_{\alpha,\eps}$.
Thus 
\[
\frac{1}{\eta}\le C(N,\alpha)(-E(\Om_{{\eps,\eta}})) \le -C(N,
\alpha)E(B_R),
\]
which leads to a contradiction as soon as $\eta_2<\frac{1}{C(N,\alpha)(-E(B_R))}=\frac{C(N,\alpha)}{R^{N+2}}$.

\par
{\bf Case $\sigma_{{\eps,\eta}}<0$. } For this case let us call \[
\rho_{{\eps,\eta}}:=(1+\sigma_{{\eps,\eta}})^{-1/N},
\]
so that $|\rho_{{\eps,\eta}}\Om_{{\eps,\eta}}|=1$.

We recall from the previous sections that a minimizer $\Om_{{\eps,\eta}}$ for $\G$ exists, and by Lemma~\ref{le:closehausdorff}, up to take $\eps_2\leq \eps_1$ as in Remark~\ref{rmk:closehausdorff}, and $\eta_2<\eta_1$ as in Lemma~\ref{le:L1closetoball}, the rescaled set $\rho_{{\eps,\eta}}\Om_{{\eps,\eta}}$ is still contained in $B_R$, as soon as, for example, $R_0>6$.

In fact, thanks to Lemma~\ref{le:infneg} and the Saint Venant inequality, we have
\begin{equation}\label{eq:boundrho}
E(B)\leq E(\rho_{\eps,\eta}\Om_{\eps,\eta})\leq \rho_{\eps,\eta}^{N+2}\frac{E(B)}{4},\qquad \text{hence, }\qquad \rho_{\eps,\eta}^{N+2}\leq 4,
\end{equation}
therefore, it is easy to check that $\rho_{\eps,\eta}\Om_{\eps,\eta}\subset B_R$.

Let us define the function\[
g\colon[1,\rho_{{\eps,\eta}}]\rightarrow \R,\qquad g(r)=E(r\Om_{\eps,\eta})+\eps  V_\alpha(r\Om_{{\eps,\eta}})+\eta(r^N|\Om_{{\eps,\eta}}|-1).
\]
We want to show that the minimum of the function $g$ is attained at $r=\rho:=\rho_{{\eps,\eta}}$. This is equivalent to show that for some $\eta$ the inequality  
\[
g(r)\geq E(\rho\Om_{{\eps,\eta}})+\eps  V_\alpha(\rho\Om_{{\eps,\eta}}),\qquad \text{for all }r\in[1,\rho],
\]
holds true. Up to rearranging the terms, and by the homogeneity of the functionals $E$ and $ V_\alpha$ such an inequality reads as
\[
\eta\left(1-\left(\frac{r}{\rho}\right)^N\right)\le (-E(\rho\Om_{{\eps,\eta}}))\left(1-\left(\frac{r}{\rho}\right)^{N+2}\right)-\eps V_\alpha({\rho}\Om_{\eps,\eta})\left(1-\left(\frac{r}{\rho}\right)^{N+\alpha}\right).
\]
Setting $t:=\frac r\rho < 1$, and observing that $r^N|\Om_{{\eps,\eta}}|=t^N$, the last inequality is equivalent to
\[
\eta\le \frac{(-E(\rho\Om_{{\eps,\eta}}))(1-t^{N+2})-\eps  V_\alpha(\rho\Om_{{\eps,\eta}})(1-t^{N+\alpha})}{1-t^N}.
\]
We recall now that $ V_\alpha(\rho\Om_{{\eps,\eta}})\le  V_\alpha(B)$ by the { Riesz} inequality, while $E(\rho\Om_{{\eps,\eta}})\le \rho^{N+2} \frac{E(B)}{4}\leq E(B)$, by Lemma~\ref{le:infneg} and~\eqref{eq:boundrho}. Thus   
\[
\begin{split}
\frac{(-E(\rho\Om_{{\eps,\eta}}))(1-t^{N+2})-\eps  V_\alpha(\rho\Om_{{\eps,\eta}})(1-t^{N+\alpha})}{1-t^N}\\
\ge \frac{-E(B)(1-t^{N+2})-\eps V_\alpha(B)(1-t^{N+\alpha})}{1-t^N}.
\end{split}
\]
Thus it is enough to show that for some $\eta>0$ it holds
\[
\eta\le \frac{-E(B)(1-t^{N+2})-\eps V_\alpha(B)(1-t^{N+\alpha})}{1-t^N}:=u_\eps(t).
\]
To conclude that $u_\eps>0$ in $[0,1)$ we directly apply Lemma \ref{yawn} with $u_\eps$ in place of $u$,  $-E(B)$ in place of $P$, and $\eps V_\alpha(B)$ in place of $Q$. Up to choose $\eps_2$ small enough, depending only on $N$ and $\alpha$, we can satisfy the requirement of the Lemma. This concludes the proof.
\end{proof}

We highlight that, from now on, we can fix an $\eta>0$ so that Theorem~\ref{thm:noconstraint} holds true, and therefore we have the equivalence of the constrained minimization problem for $\mathcal F_{\alpha,\eps}$ and the unconstrained problem for $\G$.
It is then consistent to denote an optimal set for $\G$ or $\mathcal F_{\alpha,\eps}$ by $\Om_\eps$ (and $u_\eps$ its torsion function), dropping the dependence on $\eta$.

On the other hand, we stress that this choice of $\eta$ does depend on $R$!

%%%%%%%%%%%%%%%%%%%%%%%%%%%
%SECTION5: Higher Regularity%%%%%%%%%%%
%%%%%%%%%%%%%%%%%%%%%%%%%%%

%\newpage
\section{Higher regularity of minimizers}\label{HR}

In this section we show that the mild regularity proved in Section \ref{MR} can be improved to a higher regularity of minimizers for $\G$ or, equivalently, $\mathcal F_{\alpha,\eps}$. More precisely, we will show that minimizers of $\mathcal F_{\alpha,\eps}$ are such that their boundary can be parametrized on the sphere so that the  $C^{2,\gamma}-$norm of such a  perturbation is arbitrarily small, up to choose $\eps$ small enough. 

For this whole section, we fix $R>R_0$ and $\eps\leq \eps_2(N,\alpha, R)$ so that Theorem~\ref{thm:noconstraint} holds. Then we denote $\Om_\eps$ an optimal set for problem~\eqref{eq:minGeta1} and let $u_\eps$ be its torsion function, extended to zero outside $\Om_\eps$. Hence $u_\eps$ is optimal for problem~\eqref{eq:minpropu}.

We begin with a simple geometric result, whose proof is just a rephrasing of Lemma \ref{le:closehausdorff}, since now we have the additional information that $|\Om_\eps|=1$.
\begin{lemma}\label{le:lemma5.1}
With the notations above, the sequence $\Omega_\eps$ converges to $B$ in $L^1$ as $\eps\to 0$. Moreover, for any $\delta>0$ there exists $\eps_\delta>0$ such that if $\eps<\eps_\delta$, then 
\[
\partial \Omega_\eps\subset \partial B + B_\delta = \{x\in\R^N\, :\, {\rm dist}(x,\partial B) <\delta \}. 
\]
\end{lemma}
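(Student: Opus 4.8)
The plan is to derive both statements from results already established in the excerpt, essentially as an easy corollary of Lemma~\ref{le:closehausdorff} and Corollary~\ref{cor:vicinanzaL1}, specialized to the constrained situation where $|\Om_\eps|=1$. Since, by Theorem~\ref{thm:noconstraint}, the optimal set $\Om_\eps$ for $\G$ (with the now-fixed $\eta$) is also a minimizer of the constrained problem~\eqref{eq:main}, it has unit measure, so the ball $B_{\eps,\eta}$ realizing the asymmetry in Lemma~\ref{le:stimadiffsimm} is a ball of measure exactly $1$; after a translation (which we may perform freely, all the functionals being translation invariant) we may assume $B_{\eps,\eta}=B$, the \emph{fixed} ball of unit measure centered at the origin.

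First I would invoke Corollary~\ref{cor:vicinanzaL1}: for $\eta\le\eta_1$ and all $\eps\in(0,1)$ it gives $|\Om_\eps\Delta B|\le c_1\eps$ with $c_1=c_1(N,\alpha)$, which is precisely the assertion that $\Om_\eps\to B$ in $L^1$ as $\eps\to0$ (note that here no renormalization $B_{\eps,\eta}$ is needed since the measures already agree, so the symmetric difference is literally with the fixed ball $B$). Second, for the boundary statement I would simply re-run the density-estimate argument of Lemma~\ref{le:closehausdorff}: fixing $\delta>0$, if there were a point $x\in\partial\Om_\eps$ with ${\rm dist}(x,\partial B)\ge\delta$, then either $B_{\rho_1}(x)\subset B$ or $B_{\rho_1}(x)\subset\R^N\setminus\overline B$ for $\rho_1=\min\{\rho_0(N,\alpha,R,\eta),\delta\}$; in the first case the exterior density estimate in Lemma~\ref{le:lemma4.11}(ii) forces $\omega_N\theta\rho_1^N\le|B_{\rho_1}(x)\setminus\Om_\eps|\le|B\setminus\Om_\eps|\le c_1\eps$, and in the second case the interior density estimate forces $\omega_N\theta\rho_1^N\le|B_{\rho_1}(x)\cap\Om_\eps|\le|\Om_\eps\setminus B|\le c_1\eps$; either way this is impossible once $\eps<\eps_\delta:=\omega_N\theta\rho_1^N/c_1$. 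Hence $\partial\Om_\eps\subset\partial B+B_\delta$ for $\eps<\eps_\delta$.

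There is essentially no obstacle here: both claims are immediate consequences of machinery already in place, namely the quantitative Saint-Venant inequality (via Lemma~\ref{le:stimadiffsimm}/Corollary~\ref{cor:vicinanzaL1}) for the $L^1$ convergence, and the uniform two-sided density estimates of Lemma~\ref{le:lemma4.11} for the Hausdorff-type containment of the boundary. The only point requiring a word of care is that now the comparison ball is the \emph{fixed} unit ball $B$ rather than a moving ball $B_{\eps,\eta}$ of possibly different radius — but this is exactly the simplification afforded by the constraint $|\Om_\eps|=1$, which also removes the need for the "radius not too large" estimate of Lemma~\ref{le:L1closetoball}. I would close by remarking that, as in Lemma~\ref{le:closehausdorff}, the threshold $\eps_\delta$ depends on $R$ through $\theta$ and $\rho_0$, which is harmless at this stage since $R$ has been fixed.
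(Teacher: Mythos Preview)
Your proposal is correct and follows exactly the approach the paper indicates: the paper's ``proof'' of Lemma~\ref{le:lemma5.1} consists of the single sentence that it is ``just a rephrasing of Lemma~\ref{le:closehausdorff}, since now we have the additional information that $|\Om_\eps|=1$,'' and you have written out precisely that rephrasing. The translation remark, the use of Corollary~\ref{cor:vicinanzaL1} for the $L^1$ part, and the density-estimate dichotomy for the boundary containment are all exactly what is needed; your choice to work directly with points of $\partial\Omega_\eps$ (rather than interior points as in the proof of Lemma~\ref{le:closehausdorff}) is in fact slightly cleaner, since the density estimates of Lemma~\ref{le:lemma4.11}(ii) are stated for boundary points.
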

To get the desired regularity of minimizers, we will apply results from \cite{alca}, and  techniques developed in \cite{brdeve}, and later on in \cite{demamu}.

We will need the following result~\cite[Theorem~4.5 and Theorem~4.8]{alca}, \cite[Theorem~2]{agalca}.
\begin{theorem}\label{thm:agalcathm2}
Let $\eps\leq \eps_2$, $\Om_\eps$ and $u_\eps$ be as above. The following facts hold true.
\begin{enumerate}
\item[(i)] There is a Borel function $q_{u_\eps}\colon \partial \Om_\eps\rightarrow \R$ such that, in the sense of the distributions, one has
\begin{equation}\label{eq:bdp4.31}
-\Delta u_\eps = \chi_{\Omega_\eps} - q_{u_\eps}\mathcal H^{N-1}\lfloor\partial\Omega_\eps,\qquad\text{ in }B_R.
\end{equation}
\item[(ii)]  There exist constants $0<c<C<+\infty$, depending on $R$, $N$, $\alpha$,  such that $c\le q_{u_\eps}\le C$.
\item[(iii)] For all points $\overline x\in \partial^*\Om_\eps=\partial^*\{u_\eps>0\}$, the measure theoretic {inner} unit normal $\nu_{u_\eps}(\overline x)$ is well defined and, as $\rho\to0$, 
\begin{equation}\label{eq:bdp4.32}
\frac{\Om_\eps-\overline x}{\rho}\rightarrow \{x : x\cdot \nu_{u_\eps}(\overline x)\geq 0\},\qquad \text{in }L^1(B_R).
\end{equation}
\item[(iv)]For $\mathcal H^{N-1}$ almost all $\overline x\in \partial^*\{u_\eps>0\}$ we have 
\begin{equation}\label{eq:bdp4.33}
\frac{u_\eps(\overline x+\rho x)}{\rho}\longrightarrow q_{u_\eps}(\overline x)(x\cdot \nu_{u_\eps}(\overline x))_+,\qquad \text{in }W^{1,p}(B_R)\;\text{for every }p\in[1,+\infty).
\end{equation}
\item[(v)] $\mathcal H^{N-1}(\partial \Om_\eps\setminus\partial^*\Om_\eps)=0$.
\end{enumerate}
\end{theorem}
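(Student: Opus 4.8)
The strategy is to verify that $u_\eps$ fits the hypotheses of the free boundary regularity theory of Alt--Caffarelli~\cite{alca} and of its volume-penalized refinement by Aguilera--Alt--Caffarelli~\cite{agalca}, from which (i)--(v) follow essentially verbatim; the only genuinely new point is to check that the nonlocal term $\eps V_\alpha$ behaves as a lower order perturbation in every comparison argument. The structural starting point is that $u_\eps$ is the torsion function of $\Om_\eps=\{u_\eps>0\}$, hence $-\Delta u_\eps=1$ in $\Om_\eps$, while, extending $u_\eps$ by zero, $-\Delta u_\eps\le1$ in $B_R$. Therefore the distribution $\chi_{\Om_\eps}+\Delta u_\eps$ is nonnegative and supported on $\partial\Om_\eps$, so by the Riesz representation theorem for nonnegative distributions it is a nonnegative Radon measure $\mu_\eps$ concentrated on $\partial\Om_\eps$; this already yields the skeleton $-\Delta u_\eps=\chi_{\Om_\eps}-\mu_\eps$ of~\eqref{eq:bdp4.31}, and it remains to identify $\mu_\eps$ as $q_{u_\eps}\mathcal H^{N-1}\lfloor\partial\Om_\eps$ with a bounded density.

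The quantitative heart of the matter is the Ahlfors--David estimate
\[
c\,\rho^{N-1}\le\mu_\eps\bigl(B_\rho(x_0)\bigr)\le C\,\rho^{N-1}\qquad\text{for }x_0\in\partial\Om_\eps,\ \rho\le\rho_0,
\]
together with the corresponding two-sided bound for $\mathcal H^{N-1}(\partial\Om_\eps\cap B_\rho(x_0))$, with constants depending on $N,\alpha,R,\eta$. The upper bound is immediate from the Lipschitz bound of Lemma~\ref{le:lemma4.11}(i): integrating $-\Delta u_\eps=\chi_{\Om_\eps}-\mu_\eps$ over $B_\rho(x_0)$ gives $\mu_\eps(B_\rho(x_0))\le|B_\rho\cap\Om_\eps|+\int_{\partial B_\rho(x_0)}|\nabla u_\eps|\,d\mathcal H^{N-1}\le C\rho^{N-1}$. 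The lower bound follows from the linear non-degeneracy of $u_\eps$ at free boundary points (Lemma~\ref{le:lemma4.9}) through the comparison argument of~\cite[Section~4]{alca}; the only modification is that a competitor supported in $B_\rho(x_0)$ now also changes $V_\alpha$ and $f_\eta$, but by~\eqref{eq:4.30} and Lemma~\ref{le:knupfermuratov} these changes are bounded respectively by $\bigl(\tfrac1\eta+C\eps\bigr)|B_\rho(x_0)|$ and $C_0\eps|B_\rho(x_0)|\bigl(|\Om_\eps|^{\alpha/N}+1\bigr)$, both of order $\rho^N$ and hence negligible against the surface scale $\rho^{N-1}$. This is exactly the point that makes the Alt--Caffarelli mechanism robust under the nonlocal competing term.

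Granted the Ahlfors estimate, $\Om_\eps$ has locally finite perimeter in $B_R$ by Federer's criterion, and the density estimates~\eqref{eq:4.30} force $\partial\Om_\eps=\partial^M\Om_\eps$, so that $\mathcal H^{N-1}(\partial\Om_\eps\setminus\partial^*\Om_\eps)=0$: this is (v). Moreover $\mu_\eps$ and $\mathcal H^{N-1}\lfloor\partial\Om_\eps$ are mutually absolutely continuous, whence, by the Besicovitch differentiation theorem, $\mu_\eps=q_{u_\eps}\,\mathcal H^{N-1}\lfloor\partial\Om_\eps$ for a Borel density $q_{u_\eps}$ with $c\le q_{u_\eps}\le C$ $\mathcal H^{N-1}$-a.e., which is (i) and (ii). Statement (iii) is the standard blow-up at a point of the reduced boundary of the set of finite perimeter $\Om_\eps$, as recalled in Section~\ref{setting} (using $\Om_\eps=\{u_\eps>0\}$). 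For (iv), the Lipschitz bound makes the rescalings $x\mapsto u_\eps(\overline x+\rho x)/\rho$ precompact in $C^{0,\gamma}_{\mathrm{loc}}$ and weakly in $H^1_{\mathrm{loc}}$; any limit $u_0$ is nonnegative, globally Lipschitz, harmonic in its positivity set (the rescaled right-hand side $\rho\chi_{\Om_\eps}$ vanishes) and has positivity set equal to the half-space $\{x\cdot\nu_{u_\eps}(\overline x)\ge0\}$ by (iii), so $u_0=\beta\,(x\cdot\nu_{u_\eps}(\overline x))_+$ for some $\beta\ge0$; passing to the limit in the distributional identity and using the Besicovitch density of $\mu_\eps$ at $\overline x$ identifies $\beta=q_{u_\eps}(\overline x)$, and the independence of the limit from the subsequence upgrades the convergence to the full $W^{1,p}_{\mathrm{loc}}$ convergence for every $p<\infty$.

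The main obstacle is the bookkeeping needed to guarantee that at every comparison step the variation of $V_\alpha$ is controlled, via Lemma~\ref{le:knupfermuratov}, by $\eps$ times the measure of the symmetric difference of the relevant positivity sets, so that it enters the Euler--Lagrange and blow-up analysis exactly like the penalization $f_\eta$ treated in~\cite{agalca}; once this is in place, the remaining steps are a routine transcription of~\cite[Theorems~4.5 and~4.8]{alca} and~\cite[Theorem~2]{agalca}, all constants being allowed to depend on $R$ and $\eta$ through Lemma~\ref{le:lemma4.11}.
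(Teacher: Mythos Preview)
Your proposal is correct and matches the paper's approach: the paper does not give an independent proof of this theorem at all but simply cites \cite[Theorems~4.5 and~4.8]{alca} and \cite[Theorem~2]{agalca}, relying on the fact that Lemmas~\ref{le:lemma4.9}, \ref{le:lemma4.10} and~\ref{le:lemma4.11} have already placed $u_\eps$ in the Alt--Caffarelli framework, with the Riesz term absorbed as in~\eqref{eq:uquasiminimizer}. Your write-up is a faithful unpacking of exactly that black-box citation, including the key observation that the contribution of $\eps V_\alpha$ in any local comparison is of volume order and hence subordinate to the surface-scale terms.
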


\begin{remark}[On the meaning of $q_{u_\eps}$]
For a regular set $\Omega$, by means of a shape derivative argument, one can show that  $q_{u_\eps}(x)=|{\partial_\nu u_\eps}|(x)$ for $x\in\partial \Omega_\eps=\partial\{u_\eps>0\}$. The slightly more complicated arguments that follow are due since we only know, for the moment, that minimizers of problem \eqref{eq:main} are open sets of finite perimeter.  
Namely, following ideas from \cite{alca} and \cite{agalca}, in order to show some higher regularity we first need to show some regularity results for $q_{u_\eps}$. Formally it is possible to see that the first variation of $\mathcal G_{\eps,\eta}$ reads as

\[
\left|\frac{\partial u_\eps(x)}{\partial \nu}\right|^2 {-\eps}v_{\Om_\eps}(x)=\Lambda,\qquad x\in \partial\Om_\eps,
\]
where $u_\eps$ is the torsion function of $\Omega_\eps$, $v_{\Om_\eps}$ its Riesz potential and  $\Lambda$ some constant. Thus, since $q_{u_\eps}$ stays far from zero and infinity (thanks to Theorem~\ref{thm:agalcathm2}(ii)), then the regularity of $q_{u_\eps}=\left|\frac{\partial u_\eps}{\partial \nu}\right|$ is the same as that of $v_{\Om_\eps}$. Such relation on the other hand is not necessarily true, because of the lack of regularity of $\partial\Omega$, but will turn out to be true on $\partial^*\Omega$, the reduced boundary of $\Omega$.
\end{remark}

Before rigorously developing the argument described in the previous remark, we show a simple regularity result for the Riesz potentials. This is rather standard, but we give a proof for the sake of completeness.
\begin{lemma}\label{le:rieszregularity}
Let $\alpha\in (1,N)$ and let $A$ be a bounded open set. Then $w:=\chi_A*|\cdot|^{\alpha-N}$ is of class $C^{1,\gamma}(\overline A)$ for some $\gamma\in(0,1)$. 
\end{lemma}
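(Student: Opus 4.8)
The plan is to show that $w = \chi_A * |\cdot|^{\alpha - N}$ is $C^{1,\gamma}$ by differentiating under the convolution and controlling the resulting singular integral. The exponent $\alpha - N$ satisfies $\alpha - N \in (1-N, 0)$, so $|\cdot|^{\alpha-N}$ is locally integrable (since $\alpha > 0$), and its gradient $\nabla (|\cdot|^{\alpha-N}) = (\alpha - N)|x|^{\alpha - N - 2} x$ has size comparable to $|x|^{\alpha - N - 1}$, which is \emph{also} locally integrable precisely because $\alpha > 1$ (the exponent $\alpha - N - 1 > -N$). This is the one and only place the hypothesis $\alpha \in (1,N)$ enters, and it is exactly the borderline noted in the introduction: for $\alpha \le 1$ the kernel's gradient fails to be integrable and $w$ is only H\"older.

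\textbf{Key steps.} First I would write, for $x$ in a neighborhood of $\overline A$,
\[
w(x) = \int_A |x-y|^{\alpha - N}\, dy = \int_{\R^N} \chi_A(y)\, |x-y|^{\alpha-N}\, dy,
\]
and justify that one may differentiate under the integral sign: since $y \mapsto \nabla_x |x-y|^{\alpha - N}$ is dominated, uniformly for $x$ in a compact set, by the fixed $L^1_{\mathrm{loc}}$ function $C|x-y|^{\alpha - N - 1}\chi_{B_M}(x-y)$ (using that $A$ is bounded and $\alpha > 1$), the standard differentiation-under-the-integral lemma gives
\[
\nabla w(x) = (\alpha - N)\int_A |x-y|^{\alpha - N - 2}(x-y)\, dy = \bigl(\chi_A * \nabla(|\cdot|^{\alpha-N})\bigr)(x).
\]
Second, I would prove that this $\nabla w$ is H\"older continuous. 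Fix $x_1, x_2$ with $|x_1 - x_2| = r$ small; split $A = (A \cap B_{2r}(x_1)) \cup (A \setminus B_{2r}(x_1))$. On the near piece, estimate each of $\int_{A\cap B_{2r}(x_1)} |x_i - y|^{\alpha - N - 1}\, dy \le \int_{B_{3r}} |z|^{\alpha - N - 1}\, dz = c\, r^{\alpha - 1}$, which is $O(r^{\gamma})$ for $\gamma = \alpha - 1$ if $\alpha \le 2$ (and trivially controlled if $\alpha > 2$, where the kernel gradient is continuous). On the far piece, use the mean value inequality $\big| |x_1 - y|^{\alpha - N - 2}(x_1 - y) - |x_2 - y|^{\alpha - N - 2}(x_2 - y)\big| \le C\, r\, |x_1 - y|^{\alpha - N - 2}$ valid for $|x_1 - y| \ge 2r$, and integrate: $\int_{A \setminus B_{2r}(x_1)} |x_1 - y|^{\alpha - N - 2}\, dy \le \int_{2r \le |z| \le D} |z|^{\alpha - N - 2}\, dz$, where $D = \mathrm{diam}(A) + 1$; this integral is $O(r^{\alpha - 2})$ when $\alpha < 2$ and $O(\log(1/r))$ when $\alpha = 2$, so the far contribution is $O(r \cdot r^{\alpha - 2}) = O(r^{\alpha - 1})$ (resp. $O(r\log(1/r))$), again $O(r^\gamma)$ for a suitable $\gamma \in (0,1)$. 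Combining, $|\nabla w(x_1) - \nabla w(x_2)| \le C\, r^\gamma$, which gives $w \in C^{1,\gamma}(\overline A)$.

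\textbf{Main obstacle.} There is no deep difficulty here; the result is genuinely routine. The only point requiring care is the bookkeeping of the two regimes $\alpha \le 2$ versus $\alpha > 2$ in the H\"older estimate and the logarithmic borderline at $\alpha = 2$ — in all cases one can extract \emph{some} $\gamma \in (0,1)$ (e.g. $\gamma = \min\{\alpha - 1, 1/2\}$ works uniformly), and the statement only claims existence of such a $\gamma$. The honest "hard part" is conceptual rather than technical: recognizing that the integrability of the \emph{gradient} of the Riesz kernel, not of the kernel itself, is what is needed, which is precisely why $\alpha > 1$ is assumed throughout the paper.
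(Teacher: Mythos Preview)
Your proof is correct and follows essentially the same strategy as the paper's: identify that $\alpha>1$ makes the gradient kernel $|x|^{\alpha-N-1}$ locally integrable, justify $\nabla w=\chi_A*\nabla(|\cdot|^{\alpha-N})$, and then verify H\"older continuity of $\nabla w$. The paper justifies the differentiation step via a regularization $w_\eps(x)=\int_A(|x-y|+\eps)^{-(N-\alpha)}\,dy$ and uniform convergence of $\partial_i w_\eps$, whereas you invoke dominated convergence directly; these are interchangeable standard devices. You actually supply the near/far H\"older estimate that the paper omits (it just says ``easy to show''), so your write-up is more complete on that point.

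One small slip: your parenthetical ``trivially controlled if $\alpha>2$, where the kernel gradient is continuous'' is false --- the gradient kernel has exponent $\alpha-N-1<-1$ for all $\alpha\in(1,N)$, so it is always singular at the origin. This does not damage the argument, since your near-piece bound $\int_{B_{3r}}|z|^{\alpha-N-1}\,dz=c\,r^{\alpha-1}$ holds regardless and gives $O(r^\gamma)$ for any $\gamma\le\min\{\alpha-1,1\}$; just delete the incorrect justification.
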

\begin{proof}
Let 
\[w_\eps(x)=\int_A\frac{dy}{(|x-y|+\varepsilon)^{N-\alpha}},\qquad \text{for }x\in A,
\] 
and 
\[
w_i(x)=\int_A\partial_{x_i}\left(\frac{1}{|x-y|^{N-\alpha}}\right)\,dy=(\alpha-N)\int_A\frac{x_i-y_i}{|x-y|^{N-\alpha+2}}\,dy,
\] 
for $i=1,\dots,N$. Notice that, where $|x-y|\approx 0$, then \[
\frac{x_i-y_i}{|x-y|^{N-\alpha+2}}\approx \frac{1}{|x-y|^{N-\alpha+1}}.
\] 
Since $\alpha>1$, then $N-\alpha+1<N$ so that the $w_i$ are well defined. It is also clear that $w_\eps$ is a smooth function. We define $A_\eps^1:=\{y\in A\,:\, |x-y|\ge\sqrt\eps\}$ and $A_\eps^2=A\setminus A_\eps^1$. Notice that by absolute continuity of the Lebesgue integral, it holds 
\[
\int_{A_\eps^2} \frac{x_i-y_i}{|x-y|^{N-\alpha+2}}\,dy=o_\eps(1)\quad \text{ and } \quad \int_{A_\eps^2} \partial_{x_i} \frac{dy}{(|x-y|+\varepsilon)^{N-\alpha}}=o_\eps(1),
\]
where $o_\eps(1)$ does \emph{not} depend on $x$, but only on the measure $|A_\eps^2|$.
Thanks to this, we have that (for a constant $C$ depending only on $N$, $\alpha$),
\[ 
\begin{aligned}
&|\partial_{x_i} w_\eps(x)-w_i(x)|\\
&=\left|(\alpha-N)\int_{A_\eps^1}\frac{x_i-y_i}{|x-y|^{N-\alpha+2}}  \left[ \frac{1}{\left(1+\frac{\eps}{|x-y|} \right)^{N-\alpha+2}}-1\right] \,dy \right| +o_\eps(1)\\
&\le (N-\alpha)(N-\alpha+2)\int_{A_\eps^1} \frac{1}{|x-y|^{N-\alpha+1}}\left( \frac{\eps}{|x-y|}+o\left(\frac{\eps}{|x-y|}\right) \right)+o_\eps(1)\\
&\le C\sqrt\eps+o_\eps(1)=o_\eps(1). 
\end{aligned}
\]
Thus $\partial_{x_i} w_\eps(x)-w_i(x)\to0$ uniformly in $\R^N$. Since $w_\eps$ converges pointwise to $w$, this implies that $w$ is derivable and that
\[
\partial_{x_i} w(x)=(\alpha-N)\int_A\frac{x_i-y_i}{|x-y|^{N-\alpha+1}}\,dy.
\]
It is now easy to show that $\partial_{x_i} w(x)$ is an H\"older continuous function. This concludes the proof.
\end{proof}
In what follows we drop the subscript $\eps$ from $\Om_\eps$ and $u_\eps$ as here $\eps$ is fixed and there is no risk of confusion. The general strategy, and part of the details in the proof of the following theorem  are inspired by an argument first proposed in~\cite{agalca} and readapted later on in \cite{brdeve}.
\begin{theorem}\label{thm:optcondqu}
Let $R>R_0$, $\alpha\in(1,N)$ and $\eps\leq \eps_2$, and let $\Omega$ be a minimizer for $\G$, $u$ be its torsion function, $v_\Omega=v=\chi_\Omega*|\cdot|^{\alpha-N}$ be its Riesz potential and $q_u$ be as in Theorem~\ref{thm:agalcathm2}. Then the function {$x\mapsto q_u^2(x)-\eps v(x)$} is constant on  $\partial^*\Omega$.
\end{theorem}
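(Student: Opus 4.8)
The plan is to derive the Euler--Lagrange condition for the free boundary by performing domain variations along a vector field, and then to match this with the measure-theoretic identity from Theorem~\ref{thm:agalcathm2}(i). First I would fix a vector field $\Phi\in C^\infty_c(B_R;\R^N)$ and consider the one-parameter family of diffeomorphisms $\Psi_t = \mathrm{id}+t\Phi$, setting $\Omega_t = \Psi_t(\Omega)$ and $u_t = u\circ\Psi_t^{-1}$ as a competitor (after the usual pullback adjustment so that $u_t\in H^1_0(\Omega_t)$). Since $\Omega$ is a minimizer of $\mathcal G_{\eps,\eta}$ (equivalently of $\mathcal F_{\alpha,\eps}$ under the volume constraint, by Theorem~\ref{thm:noconstraint}), the derivative at $t=0$ of $t\mapsto \mathcal G_{\eps,\eta}(\Omega_t)$ must vanish; the $f_\eta$ term is differentiable with derivative $f_\eta'(1)\int_{\partial^*\Omega}\Phi\cdot\nu\,d\mathcal H^{N-1}$ (a constant times the volume variation), which plays the role of the Lagrange multiplier $\Lambda$. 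The shape derivative of the torsion energy is the classical $-\frac12\int_{\partial^*\Omega} q_u^2\,(\Phi\cdot\nu)\,d\mathcal H^{N-1}$ where $q_u=|\partial_\nu u|$ on the reduced boundary, and the shape derivative of $V_\alpha$ is $\int_{\partial^*\Omega} 2v(\Phi\cdot\nu)\,d\mathcal H^{N-1}$ by differentiating $V_\alpha(\Omega_t)=\int_{\Omega_t}v_{\Omega_t}$ and using the symmetry of the Riesz kernel. Collecting terms, I obtain $\int_{\partial^*\Omega}\big(q_u^2 - \eps v - \Lambda\big)(\Phi\cdot\nu)\,d\mathcal H^{N-1}=0$ for all $\Phi$, whence $q_u^2-\eps v$ is $\mathcal H^{N-1}$-a.e.\ constant on $\partial^*\Omega$.

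To upgrade ``a.e.\ constant'' to ``constant everywhere on $\partial^*\Omega$'', I would invoke the continuity of $v$: by Lemma~\ref{le:rieszregularity} the Riesz potential $v$ is $C^{1,\gamma}$ up to the closure since $\alpha>1$, so it is genuinely continuous; combined with the fact (Theorem~\ref{thm:agalcathm2}(ii)) that $q_u$ stays bounded between two positive constants, one shows that $q_u$ itself is continuous on $\partial^*\Omega$ — indeed $q_u^2=\Lambda+\eps v$ holds a.e., and since the right-hand side is continuous, the good representative of $q_u$ is continuous, and then the identity propagates to all of $\partial^*\Omega$ by the density of the set where it holds inside $\partial^*\Omega$ (using that $\partial^*\Omega$ is, locally near $\mathcal H^{N-1}$-a.e.\ point, a rectifiable set with full density). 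Alternatively, and perhaps more robustly, I would avoid pointwise subtleties entirely by working at the level of distributions: from Theorem~\ref{thm:agalcathm2}(i) we have $-\Delta u = \chi_\Omega - q_u\mathcal H^{N-1}\lfloor\partial\Omega$, and testing the first variation against $\Phi$ reproduces the same boundary integral with $q_u$ appearing through this distributional identity, which makes the constant $\Lambda$ and the identity $q_u^2-\eps v\equiv\Lambda$ appear directly on $\partial^*\Omega$.

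The main technical obstacle is justifying the shape derivative computations rigorously at the low regularity we currently have: $\Omega$ is only known to be open, of finite perimeter, with density estimates and a Lipschitz torsion function, so one cannot naively integrate by parts on $\partial\Omega$. The standard remedy — which is exactly the route taken in \cite{agalca} and adapted in \cite{brdeve} — is to use the blow-up information of Theorem~\ref{thm:agalcathm2}(iii)--(v): $\mathcal H^{N-1}(\partial\Omega\setminus\partial^*\Omega)=0$, and at $\mathcal H^{N-1}$-a.e.\ point of $\partial^*\Omega$ the rescaled functions $u(\overline x+\rho\,\cdot)/\rho$ converge in $W^{1,p}$ to the half-plane solution $q_u(\overline x)(x\cdot\nu)_+$. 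This lets one compute the first variation of the Dirichlet integral via a careful limiting argument on the reduced boundary, producing exactly the $q_u^2$ weight, and it also controls the error terms coming from the Jacobian expansion $\det D\Psi_t = 1 + t\,\mathrm{div}\,\Phi + o(t)$. The Riesz term is comparatively benign since $V_\alpha$ is differentiable along volume-preserving-to-first-order flows with derivative governed by the continuous potential $v$; the only care needed is the uniform integrability of the kernel, which holds because $\alpha\in(1,N)$ guarantees $v$ is bounded (indeed Hölder, hence $C^{1,\gamma}$ by Lemma~\ref{le:rieszregularity}). Once the first variation identity is in hand, the conclusion is immediate from the arbitrariness of the normal trace $\Phi\cdot\nu$ on $\partial^*\Omega$.
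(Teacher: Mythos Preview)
Your approach is correct in outline but follows a genuinely different route from the paper. The paper argues by contradiction with a \emph{localized two-point perturbation} in the style of \cite{agalca}: assuming $q_u^2(x_0)-\eps v(x_0)<q_u^2(x_1)-\eps v(x_1)$ for two points $x_0,x_1\in\partial^*\Omega$, it builds an explicit diffeomorphism supported in $B_\rho(x_0)\cup B_\rho(x_1)$ that deflates $\Omega$ near $x_0$ and inflates it near $x_1$ (so that the volume is preserved to first order), computes the change of each term of $\G$ via the blow-ups of Theorem~\ref{thm:agalcathm2}(iii)--(iv) at $x_0$ and $x_1$, and obtains a strictly negative variation, contradicting minimality. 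You instead take a \emph{global} field $\Phi$, differentiate $t\mapsto\G(\Psi_t(\Omega))$ at $t=0$, and extract the Euler--Lagrange identity from the arbitrariness of $\Phi\cdot\nu$.

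The two-point route buys you two things. First, the blow-up information is only invoked at the two chosen points, so there is no need to justify the Hadamard boundary formula for $E$ as an identity over all of $\partial^*\Omega$ at the current low regularity; in your scheme the change-of-variables computation of $\frac{d}{dt}\int_{\Omega_t}|\nabla(u\circ\Psi_t^{-1})|^2$ produces an interior integral that must still be converted to $\int_{\partial^*\Omega}q_u^2(\Phi\cdot\nu)$, and that step is exactly where the low regularity bites. Second, the contradiction argument yields the conclusion \emph{pointwise for every} $x\in\partial^*\Omega$, so the a.e.-to-everywhere upgrade you sketch is unnecessary. Your approach is more conceptual and would be the natural one once $\partial\Omega$ is known to be smooth; the paper's is tailored to the pre-regularity stage. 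One small bookkeeping point: with your coefficients ($-\tfrac12 q_u^2$ for the torsion variation and $2v$ for the Riesz variation) the first-order condition reads $q_u^2-4\eps v=\text{const}$ rather than $q_u^2-\eps v=\text{const}$; this discrepancy does not affect Corollary~\ref{cor:quc2gamma} or anything downstream (only $q_u^2=\Lambda+c\,\eps v$ for some $c>0$ is used), but the constants should be tracked consistently.
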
 
\begin{proof}
Let us assume, for the sake of contradiction, that there are  $x_0,x_1\in\partial^*\Omega$ such that
\[
{q_u^2(x_0)-\eps v(x_0)< q^2_u(x_1)-\eps v(x_1).}
\] 
We construct a family of diffeomorphisms which preserves the volume at the first order by {deflating} $\Omega$ around $x_0$, and {inflating} it around $x_1$. Let $\kappa<1$ and $\rho<1$ be two parameters. Let $\varphi\in C^1_0(B_1(0))$ be a non-null, radially symmetric function supported in $B_1(0)$. Then we define, { keeping in mind that $\nu_{x_i}$ denotes the inner normal},
\[
\tau_{\rho,\kappa}(x)=\tau(x)=
x+\sum_{i\in\{0,1\}}(-1)^i\kappa\rho\varphi\left(\frac{|x-x_i|}{\rho}\right)\nu_{x_i} \chi_{B_\rho(x_i)}.
\]

The field $\tau$ is a diffeomorphism for $\rho$ and $\kappa$ small enough. Notice that $\tau(x)-x$ is null outside $B_\rho(x_0)\cup B_\rho(x_1)$. A simple computation shows that
\[
\nabla\tau(x)=Id+\sum_{i\in\{0,1\}}(-1)^i\kappa\varphi'\left(\frac{|x-x_i|}{\rho}\right)\frac{x-x_i}{|x-x_i|}\otimes\nu_{x_i} \chi_{B_\rho(x_i)}, 
\]
so that\footnote{We are using the formula $\det(Id+\xi B)=1+   trace(B)\xi+o(\xi)$.}
\begin{equation}\label{eq:espdet}
\det(\nabla\tau(x)) = 1+ \sum_{i\in\{0,1\}}(-1)^i\kappa\varphi'\left(\frac{|x-x_i|}{\rho}\right)\frac{x-x_i}{|x-x_i|}\cdot\nu_{x_i} \chi_{B_\rho(x_i)}+o(\kappa).
\end{equation}
We call $\Omega_\rho =\tau(\Omega)$. We are going to show that for  $\kappa,\rho$ small enough it holds $\G(\Omega_\rho)< \G(\Omega)$, contradicting the minimality of $\Omega$. To do that we deal with the first variation of each term of the sum defining $\G$. We stress that the computations regarding the volume and the torsion contributions are identical to those performed originally in \cite{agalca} (see also \cite{brdeve} and \cite{demamu}, where the same idea is applied). We add them for the sake of completeness.

Let us begin with the volume term. We claim that
\begin{equation}\label{variazionevolume}
f_\eta(\Omega_\rho)-f_\eta(\Omega)= o(\rho^N),\qquad \text{as }\rho\rightarrow 0.
\end{equation}
To see that, thanks to \eqref{eq:propfeta} we only have to show that 
\[
\frac{1}{\rho^N}(|\Omega_\rho|-|\Omega|)\to 0,\qquad \text{as }\rho\to0. 
\]
Using the Area formula and the change of variables $x=x_i+\rho y$, we have that
\[
\begin{aligned}
\frac{1}{\rho^N}(|\Omega_\rho|-|\Omega|)&=\frac{1}{\rho^N}\left(\int_{\Omega_\rho}1\,dx-\int_\Omega1\,dx\right)\\
&=\frac{1}{\rho^N}\sum_{i\in\{0,1\}}\Big(\int \chi_{\Omega_\rho\cap B_\rho(x_i)}(x)\,dx-\int\chi_{\Omega\cap B_\rho(x_i)}(x)\,dx\Big)\\
&=\sum_{i\in\{0,1\}} \int \chi_{\left(\frac{\Omega-x_i}{\rho}\right)\cap B_1(0)}(y) \det(\nabla\tau(x_i+\rho y)) - \chi_{\left(\frac{\Omega-x_i}{\rho}\right)\cap B_1(0)}(y)\,dy.
\end{aligned}
\]
{  
We can then deduce by Theorem~\ref{thm:agalcathm2} point (iii) that   $\frac{\Omega_\rho-x_i}{\rho} \to {\{x\cdot \nu_{x_i}\geq 0\}}$ in $L^1(B_R)$, whence
\[
\lim_{\rho\to 0}\frac{1}{\rho^N}(|\Omega_\rho|-|\Omega|)=\sum_{i\in\{0,1\}} \int_{\{x\cdot \nu_{x_i}>0\}\cap B_1(0)} (-1)^i\kappa\varphi'(|y|)\left(\frac{y}{|y|}\right)\cdot \nu_{x_i} \,dy =0,
\]
where the last equality is due to the radial symmetry of $\varphi$.
}
Now that \eqref{variazionevolume} is settled, we deal with the torsion energy term. We claim that
\begin{equation}\label{variazionetorsione}
\frac{1}{\rho^N} (E(\Omega_\rho)-E(\Omega))\le\kappa{(q_u(x_0)^2-q_u(x_1)^2)}C(\varphi) + o_\rho(1)+o(\kappa),
\end{equation}
where 
{
\begin{equation}\label{rfk}
C(\varphi)=\int_{B_1(0)\cap\{y\cdot\nu=0\}}\varphi(|y|)\,d\mathcal H^{N-1}(y)=-\int_{B_1(0)\cap \{y\cdot \nu>0\}}\varphi'(|y|)\frac{y\cdot \nu}{|y|}\,dy,
\end{equation}
and the last equality follows from the divergence Theorem, recalling that $\nu$ is a inner normal and ${\mathrm{div}}(\varphi(|y|)\nu)=\varphi'(|y|)\frac{y\cdot \nu}{|y|}$.
} 
Moreover, we note that $\nu$ can be any unit direction of $\R^N$: changing direction does not affect the value of $C(\varphi)$, thanks to the radial symmetry of $\varphi$.
To show~\eqref{variazionetorsione} it suffices to prove that 
\begin{equation}\label{t1}
\frac{1}{\rho^N} \left(\int_{\Omega_\rho}|\nabla \uu_\rho|^2\,dx-\int_{\Omega}|\nabla u|^2\,dx\right)={\kappa(q_u(x_0)^2-q_u(x_1)^2)C(\varphi)} + o_\rho(1)+o(\kappa),
\end{equation}
where $\uu_\rho=u\circ \tau^{-1}$, and that
\begin{equation}\label{t2}
\int_{\Omega_\rho}\uu_\rho\,dx - \int_{\Omega}u\,dx=o(\rho^N),\qquad\text{as }\rho\to0.
\end{equation}

Indeed $\uu_\rho$ is a test function in the definition of $E(\Omega_\rho)$ so that \eqref{t1} and \eqref{t2} imply directly \eqref{variazionetorsione}. 

{The computation of \eqref{t1} is exactly as in~\cite[Proof of Lemma~4.15]{brdeve} (it is done also in \cite[Section~2]{agalca} and \cite{demamu}), hence we do not repeat it here.} To show \eqref{t2}, we compute
\[
\begin{aligned}
\frac{1}{\rho^N}&\left(\int_{\Omega_\rho}\uu_\rho(z)\,dz-\int_{\Omega}u(x)\,dx\right)=\frac{1}{\rho^N}\int_\Omega \Big(u\circ\tau^{-1}(\tau(x))\det(\nabla\tau(x))-u(x)\Big)\,dx\\
&=\sum_{i\in\{0,1\}}\int_{B_1(0)\cap\left(\frac{\Omega-x_i}{\rho}\right)} \Big(u(x_i+\rho y)\det(\nabla\tau(x_i+\rho y))- u(x_i+\rho y)\Big)\,dy\\
&=\sum_{i\in\{0,1\}}\int_{B_1(0)\cap\left(\frac{\Omega-x_i}{\rho}\right)} {(-1)^i\,\frac{u(x_i+\rho y)}{\rho}\rho}\,\kappa\,\varphi'(|y|)\frac{y}{|y|}\cdot\nu_{x_i}\,dy{+o(\kappa)}={o_\rho(1)+o(\kappa)},
\end{aligned}
\]
{where we performed the change of variable $x=x_i+\rho y$, we exploited~\eqref{eq:espdet} and used Theorem~\ref{thm:agalcathm2}, points $(iii)$ and $(iv)$.}

\noindent
Next we deal with the Riesz energy term $ V_\alpha$. We are going to show that
\begin{equation}\label{variazioneriesz}
\frac{1}{\rho^N}\left( V_\alpha(\Omega_\rho)- V_\alpha(\Omega)\right)= \kappa(v(x_1)-v(x_0)) C(\varphi)+o(k)+o_\rho(1),
\end{equation}
where $C(\varphi)$ is the constant defined in \eqref{rfk}.
The proof  of this variation is longer than the previous ones. Let us denote by $v_\rho(\cdot)=\chi_{\Omega_\rho}*|\cdot|^{\alpha-N}$ the Riesz potential of $\Omega_\rho$, and by $v(\cdot)=\chi_{\Omega}*|\cdot|^{\alpha-N}$ the Riesz potential of $\Omega$.
We have
\begin{equation}\label{strokes}
\begin{aligned}
\frac{1}{\rho^N}\left( V_\alpha(\Omega_\rho)- V_\alpha(\Omega)\right)&=\frac{1}{\rho^N} \left(\int_{\Omega_\rho}\vr(x)\,dx-\int_{\Omega}v(x)\,dx\right) \\
&=\frac{1}{\rho^N}\int_\Omega \Big(\vr(\tau(x))\det(\nabla\tau(x))-v(x)\Big)\,dx\\
&=\frac{1}{\rho^N}\int_{\Omega\setminus(B_\rho(x_0)\cup B_\rho(x_1))}(\vr(x)-v(x))\,dx \\
&+\frac{1}{\rho^N} \sum_{i=0,1}\int_{\Omega\cap B_\rho(x_i)}\Big(\vr(\tau(x))\det(\nabla\tau(x))-v(x)\Big)\,dx.
\end{aligned}
\end{equation}
We compute the last two addends of the previous formula separately:
\[
\begin{aligned}
\frac{1}{\rho^N} &\int_{\Omega\cap B_\rho(x_0)}\Big(\vr(\tau(x))\det(\nabla\tau(x))-v(x)\Big)\,dx\\
&=\int_{B_1(0)\cap\left(\frac{\Omega-x_0}{\rho}\right)}\Big(\vr(\tau(x_0+\rho y))\det({\nabla\tau(x_0+\rho y))}-v(x_0+\rho y)\Big)\,dy \\
&=\int_{B_1(0)\cap\left(\frac{\Omega-x_0}{\rho}\right)}\Big(\vr(\tau(x_0+\rho y))(1+\kappa\frac{y}{|y|}\cdot\nu_{x_0}\varphi'(|y|)   +o(\kappa) )-v(x_0+\rho y)   \Big)\,dy\\
&=\int_{B_1(0)\cap\left(\frac{\Omega-x_0}{\rho}\right)}\Big(\vr(\tau(x_0+\rho y))-v(x_0+\rho y)\Big)\,dy \\
&+ \int_{B_1(0)\cap\left(\frac{\Omega-x_0}{\rho}\right)}\Big(\vr(\tau(x_0+\rho y))\,\kappa\frac{y}{|y|}\cdot\nu_{x_0}\varphi'(|y|)\Big)\,dy.
\end{aligned}
\]
First of all we focus on the first term of the chain of inequalities above.
By Lemma \ref{le:rieszregularity}, and by Ascoli-Arzel\`a Theorem,  $v_\rho$ uniformly converges in $B_1(0)$ to some function $\widetilde v$ as $\rho\to0$, and, since its pointwise limit is $v$, we have that $\widetilde v=v$. 
As a consequence, using also Lemma~\ref{le:rieszregularity} and the dominate convergence Theorem, we have \[
\begin{split}
&\int_{B_1(0)\cap\left(\frac{\Omega-x_0}{\rho}\right)}|\vr(\tau(x_0+\rho y))-v(x_0+\rho y)|\,dy\\
&\leq \int_{B_1(0)\cap\left(\frac{\Omega-x_0}{\rho}\right)}|\vr(\tau(x_0+\rho y))-\vr(x_0+\rho y)|+|\vr(x_0+\rho y)-v(x_0+\rho y)|\,dy\\
&\leq \int_{B_1(0)\cap\left(\frac{\Omega-x_0}{\rho}\right)}|k\rho\varphi(|y|)\nu_{x_0}|^{1+\gamma}\,dy+o_\rho(1)\to 0,
\end{split}
\]
as $\rho\to 0$.
Moreover, since  $\chi_{B_1(0)\cap\left(\frac{\Omega-x_0}{\rho}\right)}\to \chi_{B_1(0)\cap\{x\cdot\nu_{x_0}>0\}}$ (see Theorem~\ref{thm:agalcathm2} (iii)), we have that
\[
\begin{split}
&\lim_{\rho\to 0}\int_{B_1(0)\cap\left(\frac{\Omega-x_0}{\rho}\right)}\Big(\vr(\tau(x_0+\rho y))\,\kappa\frac{y}{|y|}\cdot\nu_{x_0}\varphi'(|y|)\Big)\,dy\\
& =v(x_0)\kappa\int_{B_1(0)\cap\{x\cdot\nu_{x_0}>0\}}\nu_{x_0}\cdot\frac{y}{|y|}\varphi'(|y|)\,dy={-\kappa C(\varphi)v(x_0)},
\end{split}
\]
as $\rho\to0$, where {we have used~\eqref{rfk}}, the fact that \[
|\vr(\tau(x_0+\rho y))-v(x_0)|\leq |\vr(\tau(x_0+\rho y))-\vr(x_0)|+|\vr(x_0)-v(x_0)|\to 0,
\]
uniformly on the compact sets and, again,  the dominate convergence Theorem.
A completely analogous  computation shows  that
\[
\frac{1}{\rho^N} \int_{\Omega\cap B_\rho(x_1)}\Big(\vr(\tau(x))\det(\nabla\tau(x))-v(x)\Big)\,dx  \to {\kappa C(\varphi)v(x_1)},
\] 
as $\rho\to0$.

We wish to show now that the first addend on the right-hand side of \eqref{strokes} converges to $0$ as $\rho\to0$. To this aim, we compute
\begin{equation}\label{cage}
\begin{aligned}
\frac{\vr(x)-v(x)}{\rho^N}&=\frac{1}{\rho^N}\left(\int_{\Omega_\rho}\frac{dy}{|x-y|^{N-\alpha}}-\int_{\Omega}\frac{dy}{|x-y|^{N-\alpha}}\right)\\
&=\frac{1}{\rho^N}\int_\Omega\left(\frac{\det(\nabla\tau(y))}{|x-\tau(y)|^{N-\alpha}}  - \frac{1}{|x-y|^{N-\alpha}}  \right)\,dy\\
&=\sum_{i\in\{0,1\}} \frac{1}{\rho^N}\int_{\Omega\cap B_\rho(x_i)}\left(\frac{\det(\nabla\tau(y))}{|x-\tau(y)|^{N-\alpha}}  - \frac{1}{|x-y|^{N-\alpha}}  \right)\,dy\\
&=\sum_{i\in\{0,1\}} \int_{B_1(0)\cap\left(\frac{\Omega-x_i}{\rho}\right)}\left(\frac{1}{|x-\tau(x_i+\rho y)|^{N-\alpha}}  - \frac{1}{|x-(x_i+\rho y)|^{N-\alpha}}  \right)\,dy \\
&+\sum_{i\in\{0,1\}}(-1)^i\int_{B_1(0)\cap\left(\frac{\Omega-x_i}{\rho}\right)}\frac{\kappa\varphi'(|y|)\nu_{x_i}\cdot\frac{y}{|y|}}{|x-\tau(x_i+\rho y)|^{N-\alpha}}\,dy +o(\kappa).
\end{aligned}
\end{equation}
We remark that the last two addends converge to the same constant, with opposite sign. Thus in the limit they elide themselves:
\[
\sum_{i\in\{0,1\}}(-1)^i\int_{B_1(0)\cap\left(\frac{\Omega-x_i}{\rho}\right)}\frac{\kappa\varphi'(|y|)\cdot\nu_{x_i}\frac{y}{|y|}}{|x-\tau(x_i+\rho y)|^{N-\alpha}}\,dy\to0 \quad\text{as $\rho\to0$}.
\]
Now we notice that for any $X,Y,Z\in\R^N$ it holds that
\[
\frac{1}{|X-Y|^{N-\alpha}}-\frac{1}{|X-Z|^{N-\alpha}}\le (N-\alpha+1)\frac{\min(1,|Y-Z|)}{\min(|X-Y|^{N-\alpha},|X-Y|^{N-\alpha+1})}.
\]
Such an inequality can be proved easily by convexity, see for instance \cite[formula (2.11)]{fuprariesz}.
By applying such an inequality in the first two addends of the right-hand side of \eqref{cage} with $X=x$, $Y=x_i+\rho y$ and $Z=\tau(x_i+\rho y)$ we get that
\[
\begin{aligned}
&\sum_{i\in\{0,1\}} \int_{B_1(0)\cap\left(\frac{\Omega-x_i}{\rho}\right)}\left(\frac{1}{|x-\tau(x_i+\rho y)|^{N-\alpha}}  - \frac{1}{|x-(x_i+\rho y)|^{N-\alpha}}  \right)\,dy\\
&\le C(N,\alpha)\sum_{i\in\{0,1\}} \int_{B_1(0)\cap\left(\frac{\Omega-x_i}{\rho}\right)}\frac{\min(1,|\tau(x_i+\rho y)-(x_i+\rho y)|)}{\min(|x-(x_i+\rho y)|^{N-\alpha},|x-(x_i+\rho y)|^{N-\alpha+1})}\,dy\\
&\le C(N,\alpha)\|\varphi\|_{C^0}\rho\sum_{i\in\{0,1\}} \int_{B_1(0)\cap\left(\frac{\Omega-x_i}{\rho}\right)}\frac{1}{\min(|x-(x_i+\rho y)|^{N-\alpha},|x-(x_i+\rho y)|^{N-\alpha+1})}\,dy\\
& \le C(N,\alpha,\varphi)\rho\sum_{i=0,1} \int_{B_1(0)}\frac{1}{ |x-(x_i+\rho y)|^{N-\alpha}}\,dy+ \int_{B_1(0)}\frac{1}{ |x-(x_i+\rho y)|^{N-\alpha+1}}\,dy.
\end{aligned}
\] 
In the second inequality we used the fact that
\[
\min(1,|\tau(x_i+\rho y)-(x_i+\rho y)|)\le \|\varphi\|_{C^0}\rho.
\]
Since the last two integrals are finite, being $\alpha>1$, we get the desired claim, that is \eqref{variazioneriesz}. 

The conclusion now  readily follows: by minimality of $\Omega$ and thanks to \eqref{variazionevolume}, \eqref{variazionetorsione} and \eqref{variazioneriesz} we have that
\[
\begin{split}
&0\le \G(\Omega_\rho)-\G(\Omega)\\
&{\le \kappa\rho^NC(\varphi)\Big((q_u(x_0)^2-q_u(x_1)^2) + \eps(v(x_1)-v(x_0)) \Big)+o(\rho^N)+\rho^N o(\kappa).}
\end{split}
\]
Since { from the assumptions we have $(q_u(x_0)^2-q_u(x_1)^2) + \eps(v(x_1)-v(x_0))<0$}, by choosing {$\rho$ and $\kappa$ small enough}, we get the desired contradiction. The proof is concluded. 
\end{proof}
An immediate consequence of Lemma~\ref{le:rieszregularity} and Theorem~\ref{thm:optcondqu} is the following.
\begin{corollary}\label{cor:quc2gamma}
Let $\Om$, $u$ and $q_u$ be as above. For some constant $\Lambda_\eps>0$, we have
\begin{equation}\label{eq:optcondbdry}
q_u^2(x)-\eps v_\Om(x)=\Lambda_\eps,\qquad \text{for }x\in \partial^*\Om.
\end{equation}
Moreover, $q_u\in C^{1,\gamma}$ for some $\gamma\in(0,1)$ and \[
\|q_u\|_{C^{1,\gamma}}\leq C(N,\alpha,R).
\]
\end{corollary}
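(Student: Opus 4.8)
The plan is to read the identity \eqref{eq:optcondbdry} off Theorem~\ref{thm:optcondqu}, and then to bootstrap the regularity of $q_u$ from that of the Riesz potential $v_\Om$ established in Lemma~\ref{le:rieszregularity}. By Theorem~\ref{thm:optcondqu} the function $x\mapsto q_u^2(x)-\eps v_\Om(x)$ takes a constant value on $\partial^*\Om$; calling this value $\Lambda_\eps$ is exactly \eqref{eq:optcondbdry}. To check that $\Lambda_\eps>0$ I would evaluate at any $\overline x\in\partial^*\Om$ and combine the lower bound $q_u\ge c>0$ from Theorem~\ref{thm:agalcathm2}(ii) with the rearrangement estimate $v_\Om(\overline x)=\int_\Om|\overline x-y|^{\alpha-N}\,dy\le\int_{\widetilde B(\overline x)}|\overline x-y|^{\alpha-N}\,dy=C_0(N,\alpha)$ already used in Lemma~\ref{le:knupfermuratov}, where $\widetilde B(\overline x)$ is the ball of measure $|\Om|=1$ centred at $\overline x$. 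This gives $\Lambda_\eps\ge c^2-\eps C_0$, so, absorbing the harmless smallness requirement $\eps C_0\le c^2/2$ into $\eps_2=\eps_2(N,\alpha,R)$, we obtain $\Lambda_\eps\ge c^2/2>0$; the upper bound $q_u\le C$ from Theorem~\ref{thm:agalcathm2}(ii) gives $\Lambda_\eps\le C^2$, so $\Lambda_\eps$ lies in a compact interval depending only on $N,\alpha,R$.

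For the regularity I would first note that $\Om$ is open by Lemma~\ref{le:lemma4.11}(i) and bounded since $\Om\subset B_R$, so Lemma~\ref{le:rieszregularity} applies and gives $v_\Om\in C^{1,\gamma}(\overline\Om)$ for some $\gamma\in(0,1)$, with $\|v_\Om\|_{C^{1,\gamma}(\overline\Om)}\le C(N,\alpha,R)$ because every estimate in the proof of that lemma depends only on $N$, $\alpha$ and on the facts that $|\Om|=1$ and $\Om\subset B_R$. Hence $g:=\Lambda_\eps+\eps v_\Om\in C^{1,\gamma}(\overline\Om)$, with $\|g\|_{C^{1,\gamma}(\overline\Om)}\le C(N,\alpha,R)$, and $g\ge\Lambda_\eps\ge c^2/2>0$ on $\overline\Om$ since $v_\Om\ge0$. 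By \eqref{eq:optcondbdry} we have $q_u^2=g$ on $\partial^*\Om$, and since $\mathcal H^{N-1}(\partial\Om\setminus\partial^*\Om)=0$ by Theorem~\ref{thm:agalcathm2}(v), modifying $q_u$ on this $\mathcal H^{N-1}$-null set does not affect the representation \eqref{eq:bdp4.31}, so we may simply set $q_u=\sqrt{g}$ on all of $\partial\Om$. Finally, $t\mapsto\sqrt{t}$ is smooth with bounded derivatives on the interval $[c^2/2,\|g\|_\infty]$, hence $\sqrt{g}$ is again of class $C^{1,\gamma}$ with $\|\sqrt{g}\|_{C^{1,\gamma}}\le C(N,\alpha,R)$, which yields $q_u\in C^{1,\gamma}$ with the claimed bound.

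There is no serious obstacle here, the statement being essentially a corollary of the (genuinely hard) Theorem~\ref{thm:optcondqu} together with Lemma~\ref{le:rieszregularity}. The points that require a little care are: (a) the strict positivity of $\Lambda_\eps$, which costs the above harmless extra smallness of $\eps$; (b) the bookkeeping showing that the $C^{1,\gamma}$ bounds for $v_\Om$, and hence for $q_u$, depend only on $N,\alpha,R$ and not on the particular minimizer; and (c) the measure-theoretic remark that $q_u$ can be redefined on an $\mathcal H^{N-1}$-null subset of $\partial\Om$ so that the identity $q_u=\sqrt{g}$ holds pointwise everywhere on $\partial\Om$.
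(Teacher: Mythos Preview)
Your proposal is correct and matches the paper's intended argument: the corollary is stated there simply as ``an immediate consequence of Lemma~\ref{le:rieszregularity} and Theorem~\ref{thm:optcondqu}'', without further details, and what you wrote is precisely the natural way to unpack this. Your careful treatment of the strict positivity of $\Lambda_\eps$ (via the lower bound on $q_u$ and the upper bound on $v_\Om$, at the cost of a harmless shrinking of $\eps_2$) and of the extension of $q_u$ from $\partial^*\Om$ to all of $\partial\Om$ via $\sqrt{g}$ actually fill in points that the paper leaves implicit.
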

Finally, to prove that the boundary of $\Om_\eps$ is locally the graph of a $C^{2,\gamma}$ function on the boundary of a ball, we only need to implement the improvement of flatness technique from~\cite[Section~7 and~8]{alca}, which can be readapted with minimal changes to our setting as shown in~\cite[Appendix]{gush}.
\begin{definition}\label{def:classF}
Let $\mu_\pm\in(0,1]$ and $k>0$. A weak solution $u$ of~\eqref{eq:bdp4.31} is of class $F(\mu_-,\mu_+,k)$ in $B_\rho(x_0)$ with respect to direction $\nu\in \mathbb{S}^{N-1}$ if 
\begin{enumerate}
\item[(a)]$x_0\in \partial \{u>0\}$ and \[
\begin{split}
u=0,\qquad &\text{for }(x-x_0)\cdot \nu\leq -\mu_-\rho,\quad {x\in B_\rho(x_0)},\\
u(x)\geq q_{u}(x_0)[(x-x_0)\cdot \nu-\mu_+\rho],\qquad &\text{for }(x-x_0)\cdot \nu\geq \mu_+\rho,\quad {x\in B_\rho(x_0)}.
\end{split}
\]
\item[(b)] $|\nabla u(x_0)|\leq q_u(x_0)(1+k)$ in $B_\rho(x_0)$ and ${\rm osc}_{B_{\rho}(x_0)}q_u\leq kq_u(x_0)$.
\end{enumerate} 
\end{definition}
We note that if $k=+\infty$, then condition $(b)$ is automatically satisfied, that is, no bounds on the gradient are required.
The fact that our minimizers are nearly spherical sets of class $C^{2,\gamma}$ is now a direct consequence of the  following regularity result, which was first proved in~\cite[Theorem~8.1]{alca} and \cite[Theorem~2]{kini}.
\begin{theorem}\label{thm:bdvthm4.18}
Let $u$ be a weak solution to~\eqref{eq:bdp4.31} in $B_R$ and assume that $q_{u}$ is $C^{1,\gamma}$ for some constant $\gamma\in(0,1)$ in a neighborhood of $\{u>0\}$. Then there are constants $\overline \mu$ and $\overline k$, depending only on $N$, $\alpha$, $R$, $\max q_u$, $\min q_u$, $\|q_u\|_{C^{1,\gamma}}$ such that: 

If $u$ is of class $F(\mu,1,+\infty)$ in $B_{4\rho}(x_0)$ with respect to some direction $\nu\in \mathbb{S}^{N-1}$ with $\mu\leq \overline \mu$ and $\rho\leq \overline k \mu^2$, then there exists a $C^{2,
\gamma}$ function $f\colon \R^{N-1}\to \R$ with $\|f\|_{C^{2,\gamma}}\leq C(N,\alpha,R, \|q_u\|_{C^{1,\gamma}})$ such that, calling \[
{\rm graph}_\nu f:=\{x\in \R^N : x\cdot \nu =f(x-(x\cdot\nu)\nu)\},
\]
then \[
\partial \{u>0\}\cap B_\rho(x_0)=(x_0+{\rm graph}_\nu(f))\cap B_\rho(x_0).
\]
\end{theorem}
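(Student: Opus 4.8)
The plan is to follow the improvement-of-flatness scheme of Alt and Caffarelli~\cite{alca} for one-phase free boundary problems with a bounded right-hand side, and to conclude by the partial-hodograph bootstrap of Kinderlehrer and Nirenberg~\cite{kini}; the adaptation to the present inhomogeneous setting is essentially the one carried out in~\cite[Appendix]{gush}. The structural hypotheses of that machinery are all in force here: by Theorem~\ref{thm:agalcathm2} and Corollary~\ref{cor:quc2gamma}, $u$ solves~\eqref{eq:bdp4.31} with right-hand side $\chi_\Omega\in L^\infty$, and $q_u$ satisfies $0<c\le q_u\le C$ together with $\|q_u\|_{C^{1,\gamma}}\le C(N,\alpha,R)$.

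First I would establish the key \emph{flatness improvement} iteration: there is a universal $\theta\in(0,1)$ such that, if $u$ is of class $F(\mu,1,+\infty)$ in $B_{4\rho}(x_0)$ with respect to $\nu$, with $\mu\le\overline\mu$ and $\rho\le\overline k\mu^2$, then $u$ is of class $F(\tfrac12\mu,1,+\infty)$ in $B_{\theta\rho}(x_0)$ with respect to a new direction $\nu'$ with $|\nu-\nu'|\le C\mu$. This is proved exactly as in~\cite[Sections~7--8]{alca}: after rescaling $u$ to the unit ball, interior Harnack inequalities and the non-degeneracy and density estimates of Lemma~\ref{le:lemma4.11} provide a Harnack-type control on the oscillation of $\partial\{u>0\}$ about the hyperplane $\{(x-x_0)\cdot\nu=0\}$; then one runs the Alt--Caffarelli dichotomy, comparing $u$ with the one-phase barriers $q_u(x_0)\,(x\cdot\nu-t)_+$. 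The crucial point is that the scaling regime $\rho\le\overline k\mu^2$ makes both the forcing term $\chi_\Omega$ (of size $O(\rho)$ after the natural parabolic-type rescaling) and the oscillation of $q_u$ (of size $O(\rho^\gamma)$) lower order with respect to the $O(\mu)$ flatness, so the inhomogeneity does not disturb the argument; the two-sided bound on $q_u$ keeps the comparison barriers non-degenerate along the whole iteration.

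Next I would iterate this on the dyadic scales $\theta^j\rho$, obtaining a geometric decay $\mu_j\le 2^{-j}\mu$ of the flatness and a Cauchy sequence of directions $\nu_j\to\nu_\infty$ with $|\nu_j-\nu_\infty|\le C\,2^{-j}\mu$. Standard arguments (see~\cite[Section~8]{alca}) then yield that near $x_0$ the set $\partial\{u>0\}$ is the graph of a $C^{1,\gamma}$ function over the hyperplane orthogonal to $\nu_\infty$, with norm controlled by $N,\alpha,R,\|q_u\|_{C^{1,\gamma}}$, and the scaling-invariant smallness of $\mu$ forces $\nu_\infty$ to be as close to $\nu$ as wanted. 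Finally, to upgrade $C^{1,\gamma}$ to $C^{2,\gamma}$, I would use the overdetermined condition~\eqref{eq:optcondbdry}, i.e.\ $|\nabla u|^2=q_u^2=\Lambda_\eps+\eps v_\Omega$ on the (now $C^{1,\gamma}$) free boundary, together with $\Delta u=-1$ inside and the $C^{1,\gamma}$ regularity of $v_\Omega$ from Lemma~\ref{le:rieszregularity}: the partial hodograph/Legendre transform of~\cite{kini} straightens $\partial\{u>0\}$ and converts the problem into a uniformly elliptic equation with $C^{0,\gamma}$ coefficients and an oblique-derivative boundary condition with $C^{1,\gamma}$ data, so Schauder estimates give $C^{2,\gamma}$ regularity of the transformed solution, hence of the boundary, with the quantitative bound claimed.

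The hard part will be Step one, the flatness-improvement dichotomy, and specifically checking that the Harnack-type estimate and the barrier comparison are not spoiled by the non-constant right-hand side $\chi_\Omega$ and the variable slope $q_u$; as indicated above this is precisely the content of~\cite[Appendix]{gush} (and of the analogous reduction used in~\cite{agalca}), and it goes through thanks to the regime $\rho\lesssim\mu^2$ and to the uniform bounds on $q_u$ furnished by Corollary~\ref{cor:quc2gamma}. Everything else is a routine transcription of~\cite{alca,kini}.
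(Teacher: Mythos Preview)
Your proposal is correct and follows exactly the route the paper takes: the theorem is not proved in the paper but is simply imported from~\cite[Theorem~8.1]{alca} and~\cite[Theorem~2]{kini}, with the remark that the improvement-of-flatness scheme of~\cite[Sections~7--8]{alca} adapts to the present inhomogeneous setting ``with minimal changes'' as in~\cite[Appendix]{gush}. Your three-step outline (flatness improvement under the regime $\rho\lesssim\mu^2$, iteration to $C^{1,\gamma}$, then the Kinderlehrer--Nirenberg hodograph bootstrap using the $C^{1,\gamma}$ regularity of $q_u$) is precisely the content of those references, and your identification of where the bounds on $q_u$ from Theorem~\ref{thm:agalcathm2}(ii) and Corollary~\ref{cor:quc2gamma} enter is accurate.
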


%\newpage

%%%%%%%%%%%%%%%%%%%%%
%CONCLUSION%
%%%%%%%%%%%%%%%%%%%%%

%\newpage
\section{Proof of Theorem \ref{thm:mainvero2}}\label{mainthm}
In the last section we have shown that any minimizer for problem \eqref{eq:mintor} has boundary close to that of a ball (precisely, the ball which achieve the minimum in the definition of asymmetry), and is locally $C^{2,\gamma}-$ regular. This, reasoning as in ~\cite[Proof of Proposition~4.4]{brdeve}, is enough to show that such a minimum is a nearly spherical set, and to conclude the proof of Theorem  \ref{thm:mainvero2}.
\begin{proof}[Proof of Theorem~\ref{thm:mainvero2}]
Thanks to Theorem~\ref{thm:noconstraint} and Lemma~\ref{le:existminG}, for $\eps_{*}$ small enough (depending on $N,\alpha,R$), there is a minimizer $\Om_\eps$ for~\eqref{eq:mintor} and we can assume without loss of generality that the barycenter of $\Om_\eps$ is $x_{\Om_\eps}=0$. 
It is not difficult to show  that the sequence of the translated sets $\Om_\eps$ with barycenter at the origin still converges in $L^1$ to the ball $B$ of unit measure and centered at the origin, and thus the statement of Lemma~\ref{le:lemma5.1} applies for them.
We call $u_\eps$ the torsion function of $\Om_\eps$, so that $\Omega_\eps=\{u_\eps>0 \}$.
We claim that $\Om_\eps$ is a $C^{2,\gamma}$ nearly spherical set.
To see this, let $\overline k,\overline \mu$ be as in Theorem~\ref{thm:bdvthm4.18} and $\mu<\overline \mu$ to be fixed later. Since $\partial B$ is smooth, there exists $\rho(\mu)\leq \overline k \mu^2$ such that, for all $\rho\leq \rho(\mu)$ and all $\overline x\in \partial B$, we have \[
\partial B\cap B_{5\rho}(\overline x)\subset \Big\{x : |(x-\overline x)\cdot \nu_{\overline x}|\leq \mu\rho\Big\},
\]
where hereafter $\nu_{\overline x}$ is the { inner} unit normal to $\partial B$ at $\overline x$.
By Lemma~\ref{le:lemma5.1}, up to take $\eps_E$ small enough (depending possibly also on $\mu$), there is a point $x_0\in \partial \Om_\eps\cap B_{\mu\rho(\mu)}(\overline x)$ such that \[
\partial \Om_\eps\cap B_{4\rho(\mu)}(x_0)\subset B_{\mu\rho(\mu)}\Big(\partial B\cap B_{5\rho(\mu)}(\overline x)\Big)\subset \Big\{x : |(x-x_0)\cdot \nu_{\overline x}|\leq 4\mu\rho(\mu)\Big\}.
\]
{ We notice that, with the notation of Definition~\ref{def:classF}, the second condition of part $(a)$ holds if $\mu_+=1$, since $u_\eps\geq0$. Therefore }$u_\eps$ is of class $F(\mu,1,+\infty)$ in ${B_{4\rho(\mu)}(x_0)}$ in direction $\nu_{\overline x}$  and hence, by Theorem~\ref{thm:bdvthm4.18} and Corollary~\ref{cor:quc2gamma}, we infer that ${\partial \Om_\eps\cap B_{\rho(\mu)}(x_0)}$ is the graph of a $C^{2,\gamma}$ function with respect to $\nu_{\overline x}$.
So, up to further decrease $\mu$, there are functions $\varphi_\eps^{\overline x}$ with $C^{2,\gamma}$ norm uniformly bounded such that \[
\partial \Om_\eps\cap B_{\rho(\mu)}(\overline x)=\Big\{x+\varphi_\eps^{\overline x}(x)x : x\in \partial B\Big\}.
\]
As the balls $\{B_{\rho(\mu)}(\overline x)\}_{\overline x\in \partial B}$ cover $\partial B$, by compactness there is a function $\varphi_\eps\in C^{2,\gamma}(\partial B)$ with bounded $C^{2,\gamma}$ norm.
Moreover, up to take $\eps_E$ small enough, by Lemma~\ref{le:lemma5.1}, we can assume that $\|\varphi_\eps\|_{C^{2,\gamma'}}$ is as small as we wish. A direct application of~\cite[Theorem~3.3]{brdeve} (recalling also that $\Om_\eps$ has barycenter in the origin) entails  that \[
E(\Om_\eps)-E(B)\geq C(N)\|\varphi_\eps\|^2_{H^{1/2}(\partial B)}\geq C(N)\|\varphi_\eps\|^2_{L^2(\partial B)}.
\]
Up to further decrease $\eps_E$, by~\cite[equation~(6.8)]{km} we have \[
 V_\alpha(B)- V_\alpha(\Om_\eps)\leq C'(N,\alpha)\|\varphi_\eps\|^2_{L^2(\partial B)}.
\]
By minimality of $\Om_\eps$ and the two bounds above, we have \[
C(N)\|\varphi_\eps\|^2_{L^2(\partial B)}
\leq E(\Om_\eps)-E(B)\leq \eps( V_\alpha(B)- V_\alpha(\Om_\eps))\leq C'(N,\alpha)\eps \|\varphi_\eps\|^2_{L^2(\partial B)}.
\]
Since the constants $C$ and $C'$ are independent of $\eps$, we can take $\eps_E$ small enough (depending on $N,\alpha,R$) so that, for all $\eps\leq \eps_E$ we have \[
E(\Om_\eps)=E(B),
\]
and by the rigidity of the Saint-Venant inequality, we conclude. 
\end{proof}

%\newpage

\section{A surgery result for the functional involving the first eigenvalue}\label{sect:surgery}
In this section we prove the following \emph{surgery} result. Throughout this section, $\Omega$ is an open set of unit measure, $B$ is the ball of unit measure centered at the origin and we define
\[
\widetilde{\mathcal F}_{\alpha,\eps}(\Omega)=\lambda_1(\Omega)+\eps V_\alpha(\Om).
\] 

\begin{proposition}\label{prop:surgery}
Let $\alpha\in(1,N)$.
There exist constants $D(N,\alpha)$, $\overline \delta (N,\alpha)<1$ and $\overline \eps(N,\alpha)$ such that if $\eps\leq \overline \eps(N,\alpha)$ then for any open and connected set $\Om\subset \R^N$ of unit measure satisfying $\la_1(\Om)-\la_1(B)\leq \overline \delta(N,\alpha)$ there exists an open, connected set $\widehat \Om$ of unit measure with diameter bounded by $D$ and such that
 \[
{\widetilde {\mathcal F}}_{\alpha,\eps}(\widehat \Om)\leq {\widetilde {\mathcal F}}_{\alpha,\eps}(\Om).
\]
\end{proposition}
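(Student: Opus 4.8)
The plan is to run the De Giorgi type surgery of Mazzoleni--Pratelli~\cite{mp}, taking advantage of two features that simplify the argument: we work with the \emph{first} eigenvalue only, and the hypothesis $\la_1(\Om)-\la_1(B)\le\overline\de$ together with the quantitative Faber--Krahn inequality forces $\Om$ to be $L^1$-close to a ball. First I would normalise: if $\mathrm{diam}(\Om)\le D$ there is nothing to prove, so assume $\mathrm{diam}(\Om)>D$, with $D=D(N,\alpha)$ a large constant fixed at the end. By Theorem~\ref{thm:quantitativefk} we have $\mathcal A(\Om)^2\le\overline\de/\widehat\sigma$, so, after a translation, $|\Om\Delta B|\le(\overline\de/\widehat\sigma)^{1/2}=:\epsilon_0$, where $B$ is the unit-measure ball centred at the origin (radius $r_0=\omega_N^{-1/N}$). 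Let $u\ge0$, $\|u\|_{L^2}=1$, be the first eigenfunction of $\Om$; recall the standard elliptic bound $\|u\|_{L^\infty}\le C(N)$ (using $\la_1(\Om)\le\la_1(B)+1$) and the spectral stability fact that, since $\la_1(\Om)\to\la_1(B)$ and $|\Om\Delta B|\to 0$, one has $u\to u_B$ in $H^1(\R^N)$, $u_B$ being the first eigenfunction of $B$ extended by zero; quantitatively this yields a modulus $\eta$, $\eta(0^+)=0$, with $\int_{\Om\setminus B_{r_0}}u^2\le\eta(\overline\de)$ and $\int_{\Om\setminus B_{r_0}}|\nabla u|^2=\la_1(\Om)-\int_{B_{r_0}}|\nabla u|^2\le\la_1(\Om)-\la_1(B)+C\,\omega(\mathcal A(\Om))\le\eta(\overline\de)$, where I used $\nabla u=0$ a.e.\ on $\R^N\setminus\Om$.

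Next I would perform the cut. By a coarea/mean--value argument over $s\in(\tfrac32 r_0,2r_0)$ I would select a radius $R_1$ through which the fluxes $\int_{\partial B_{R_1}\cap\Om}(u^2+|\nabla u|^2)\,d\H^{N-1}$ and $\H^{N-1}\big((\Om\setminus B_{r_0})\cap\partial B_{R_1}\big)$ are small, and then, with a Lipschitz cutoff $\psi$ equal to $1$ on $B_{R_1-\theta}$ and vanishing off $B_{R_1}$ (for a suitable small $\theta$), set $w:=u\psi\in H^1_0(\Om\cap B_{R_1})$. Expanding $\int|\nabla w|^2$ and using the tail bounds one gets $\int|\nabla w|^2\le\la_1(\Om)+C\eta(\overline\de)^{1/2}$ and $\int w^2\ge 1-C\eta(\overline\de)$, hence $\la_1(\Om\cap B_{R_1})\le\la_1(\Om)+C\eta(\overline\de)^{1/2}$. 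Since $\la_1(\Om\cap B_{R_1})=\min_i\la_1(C_i)$ over the connected components $C_i$ of $\Om\cap B_{R_1}$, the component $\Om^\sharp$ realising the minimum satisfies $\la_1(\Om^\sharp)\le\la_1(B)+C\eta(\overline\de)^{1/2}$, so by Faber--Krahn $|\Om^\sharp|\ge 1-C\eta(\overline\de)^{1/2}$; moreover $\Om^\sharp$ is open, connected and $\Om^\sharp\subset B_{R_1}\subset B_{2r_0}$. Finally I would rescale: with $t:=|\Om^\sharp|^{-1/N}\ge1$ put $\widehat\Om:=t\,\Om^\sharp$, which is open, connected, of unit measure, and $\mathrm{diam}(\widehat\Om)\le 2tR_1\le 8r_0=:D(N)$.

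For the comparison of the functionals I would use $\la_1(\widehat\Om)=t^{-2}\la_1(\Om^\sharp)$ and $t^{-2}-1\le-\tfrac{c}{N}\,(1-|\Om^\sharp|)\,\la_1(B)$, and for the Riesz term the monotonicity $V_\alpha(\Om^\sharp)\le V_\alpha(\Om)$ (recalled in Section~\ref{setting}) together with the Riesz inequality $V_\alpha(\Om)\le V_\alpha(B)$, which give $\eps\big(V_\alpha(\widehat\Om)-V_\alpha(\Om)\big)\le\eps(t^{N+\alpha}-1)V_\alpha(B)\le C\eps\eta(\overline\de)^{1/2}$. Putting these together,
\[
\widetilde{\mathcal F}_{\alpha,\eps}(\widehat\Om)-\widetilde{\mathcal F}_{\alpha,\eps}(\Om)\le -\tfrac{c}{N}\,(1-|\Om^\sharp|)\,\la_1(B)+C\big(1+\eps\big)\,\big(\text{cut--loss}\big),
\]
so the statement reduces to showing that the gain from the dilation dominates the eigenvalue error produced by the truncation.

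\textbf{The main obstacle} is precisely this last balance: a priori the mass removed, $1-|\Om^\sharp|$, could be far smaller than a fixed power of $\overline\de$, so the crude bound $\text{cut--loss}\le C\eta(\overline\de)^{1/2}$ is not enough. The point — and the place where the geometry enters — is that if $\mathrm{diam}(\Om)>D$ then connectedness together with $|\Om\Delta B|\le\epsilon_0$ forces $\Om$ to have a long, thin tentacle contained in $\Om\setminus B_{r_0}$; localising the cutoff $\psi$ on a transition region coinciding with (most of) this tentacle, whose ``length'' is $\gtrsim D$ while its measure is $1-|\Om^\sharp|$, makes $\int u^2|\nabla\psi|^2\le C(N)^2\,(1-|\Om^\sharp|)/D^2$, i.e.\ the cut--loss is controlled by the removed mass divided by $D^2$, which for $D=D(N,\alpha)$ large is negligible with respect to $\tfrac{c}{N}(1-|\Om^\sharp|)\la_1(B)$. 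The residual difficulty — a tentacle degenerating into a far bulb attached by a short thin neck — is handled by the dichotomy/iteration scheme of~\cite{mp}, here considerably simplified because one may take $\Om$ as $L^1$-close to $B$ as desired; combining this with the estimates on $V_\alpha$ above yields $\widetilde{\mathcal F}_{\alpha,\eps}(\widehat\Om)\le\widetilde{\mathcal F}_{\alpha,\eps}(\Om)$, as claimed.
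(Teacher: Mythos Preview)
Your high-level strategy is the same as the paper's: use the quantitative Faber--Krahn inequality to place $\Om$ in an $L^1$-neighbourhood of a ball, cut off the protruding ``tail'', and rescale. You also correctly isolate the only real difficulty, namely showing that the rescaling gain $\gtrsim (1-|\Om^\sharp|)$ dominates the eigenvalue loss produced by the truncation. But this is precisely the step where your argument breaks down.

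The bound you assert, $\int u^2|\nabla\psi|^2\le C(N)^2\,(1-|\Om^\sharp|)/D^2$, conflates two unrelated quantities: the integral on the left is controlled by the mass of $\Om$ in the \emph{transition region} of $\psi$, whereas $1-|\Om^\sharp|$ is the mass \emph{outside} the support of $\psi$. These need not be comparable, and your cutoff geometry is inconsistent between the two parts of the argument (first a radial cutoff of width $\theta$ near $R_1\approx 2r_0$, then a cutoff ``localised along the tentacle'' of length $D$, which would no longer yield $\Om^\sharp\subset B_{2r_0}$). Your final sentence defers the ``residual difficulty'' to the dichotomy/iteration of~\cite{mp}, but that scheme \emph{is} the entire content of the proof, not a detail to be cited away.

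The paper's resolution is different in kind from a single coarea selection. It cuts along hyperplanes $\{x_1=t\}$ and, for each $t$, attaches a short cylinder $Q(t)=(t-\eps(t)^{1/(N-1)},t)\times\Om_t$ over the slice $\Om_t$, with $u$ extended linearly to zero (Lemmas~\ref{lemmatest} and~\ref{noth}); this gives the sharp cut-loss bound $\la_1(\Omt(t))\le\la_1(\Om)+C\,\eps(t)^{1/(N-1)}\de(t)$, where $\eps(t)=\H^{N-1}(\Om_t)$ and $\de(t)=\int_{\Om_t}|\nabla u|^2$. The point is that the loss is controlled by \emph{local} slice quantities, not by global tail mass. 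The trichotomy (Lemma~\ref{threeconditions}) then says: either $\max\{\eps(t),\de(t)\}>1$, or $m(t)\le C\,(\eps(t)+\de(t))\eps(t)^{1/(N-1)}$, or the rescaled set $\Omh(t)$ strictly lowers $\widetilde{\mathcal F}_{\alpha,\eps}$ (here the Riesz contribution is absorbed exactly as you indicate, via Lemma~\ref{le:knupfermuratov} and smallness of $\eps$). In the second alternative one has $m'(t)=\eps(t)\ge c\,m(t)^{(N-1)/N}$, a differential inequality that forces the set of such $t$ to have length bounded by a constant $C_7(N,\alpha)$ (Lemma~\ref{lemmatail}); the first alternative is trivially bounded in length since $\int\eps\le|\Om|$ and $\int\de\le\la_1(\Om)$. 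Hence either the tail already has bounded extent, or some cut strictly decreases the energy. Iterating over the $2N$ half-directions yields the diameter bound. Your proposal contains none of this machinery, and the single-radius cutoff you set up cannot replace it.
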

The proof of the proposition is quite technical and is  mostly inspired by~\cite{mp} (see also~\cite{buma}). We have skipped the proofs that are essentially identical, while we have detailed the points where substantial changes need to be made.
\begin{remark}\label{rmk:surgeryconnected}
{\bf On the analogies and differences with respect to \cite{mp}.}
The connectedness assumption is a main difference with respect to the work in~\cite{mp}, though it does not change much the argument.
The reason for which we need to impose it is the presence in our functional of the repulsive Riesz potential energy. 
On the other hand this difficulty is compensated by the fact that, by choosing $\eps$ small, we can arbitrarily impose that the sets we take into account have small Fraenkel asymmetry. Moreover, dealing with only the first eigenvalue simplifies many technical steps related to the orthogonality of the higher eigenfunctions.
\end{remark}
Let us  introduce some notation.
Let $\Omega$ be a connected set such that  with $\la_1(\Om)-\la_1(B)\leq \overline \delta(N,\alpha)$, so that, by the quantitative Faber-Krahn inequality (see Theorem~\ref{thm:quantitativefk}),  up to translations we have \[
|\Om\Delta B|=\mathcal A(\Om)\leq \left(\frac{\overline \delta}{\widehat \sigma}\right)^{1/2},
\]
where $\widehat \sigma=\widehat \sigma(N)$.
From now on we fix $\Om$ so that $B$ is the ball of unit measure attaining the asymmetry and we will no more translate it.
By defining $K=K(N):=\la_1(B)+1\geq \la_1(B)+\overline \delta(N,\alpha)$ we get immediately 
 \[
\la_1(\Om)\leq K.
\] 
We then call $\overline t :=\left(\frac{1}{\omega_N}\right)^{1/N}$ the radius of the ball $B$ and note that\[
|\Om\setminus [- t,t]^N|\leq |\Om\Delta B|= \mathcal A(\Om),\qquad \text{for all }t\geq \overline t.
\]
%%
%Moreover we define a positive number $\mh=\mh(N,\alpha):=|\Om\setminus \{(t,y)\in \R^N : t\geq -\bar t\}$.
%
Let $\mh\in (0, 1/4)$ be such that
\begin{equation}\label{defmh}
\frac{(4\mh)^{\frac 2N}}{\lambda_1(B)} \, K \leq \frac 12\,.
\end{equation}
Moreover, we choose $\overline \delta(N,\alpha)$ small enough so that 
\begin{equation}\label{eq:asimmpiccolamhat}
|\Om\setminus [-\bar t ,\bar t ]^N|\leq \mathcal A(\Om)\leq \sqrt{\frac{\overline \delta}{\widehat \sigma}}\leq\frac{\mh}{2^{2N}}.
\end{equation}
We first focus on the direction $e_1$ and detail the construction in this case.
We shall denote $z=(x,y)\in \R\times\R^{N-1}$ and by $z_i$ the $i$-th component of $z\in \R^N$.
For any $t\in \R$, we  define
\begin{equation*}
\Om_t:=\Big\{y\in\rbb^{N-1} : (t,y)\in\Om\Big\}\,,
\end{equation*}
and given any set $\Omega\subseteq\R^N$, we define its $1$-dimensional projections for $1\leq p \leq N$ as
\[
\pi_p(\Omega) := \Big\{ t\in\R:\, \exists \, (z_1,\, z_2,\, \dots\, ,\, z_N)\in\Omega,\, z_p = t\Big\}\,.
\]
For every $t\leq -\bar t$ we call
\begin{align}\label{int0}
\Omega^+(t) := \Big\{(x,y)\in\Om : x>t\Big\}\,, && \Omega^-(t) := \Big\{(x,y)\in\Om : x<t\Big\}\,, && \eps(t):=\hc^{N-1}(\Om_t)\,.
\end{align}
Observe that
\begin{equation}\label{int1}
m(t) := \big| \Omega^-(t) \big| = \int_{-\infty}^t \eps(s)\,ds\leq 2\mh\,.
\end{equation}
We call $u$ the first eigenfunction on $\Om$ with unit $L^2$ norm. 
We define then also, for every $t\leq \bar t$,
\begin{align}\label{int2}
\de(t):=\int_{\Om_t}{|\nabla u(t,y)|^2\,d\hc^{N-1}(y)}\,, && \mu(t):=\int_{\Om_t}{u(t,y)^2\,d\hc^{N-1}(y)}\,,
\end{align}
which makes sense since $u$ is smooth inside $\Om$. It is convenient to give the further notation
\begin{equation}\label{defphi}
\phi(t) :=  \int_{\Omega^-(t)}  |\nabla u|^2 = \int_{-\infty}^t \delta(s)\,ds \,.
\end{equation}
Applying the Faber--Krahn inequality in $\R^{N-1}$ to the set $\Omega_t$, and using the rescaling property of eigenvalues on $\R^{N-1}$, we know that
\[
\eps(t)^{\frac{2}{N-1}} \lambda_1(\Omega_t)=\H^{N-1}(\Omega_t)^{\frac{2}{N-1}} \lambda_1(\Omega_t) \geq \lambda_1(B_{N-1})\,,
\]
calling $B_{N-1}$ the unit ball in $\R^{N-1}$. As a trivial consequence, we can estimate $\mu$ in terms of $\eps$ and $\de$: in fact, noticing that $u(t,\cdot)\in H^{1}_0(\Omega_t)$ and writing $\nabla u = (\nabla_1 u, \nabla_y u)$, we have
\begin{equation}\label{muest}
\mu(t)=\int_{\Om_t}{u(t,\cdot)^2\,d\hc^{N-1}}\leq \frac{1}{\la_1(\Om_t)}\int_{\Om_t}{|\nabla_y u(t,\cdot)|^2\,d\hc^{N-1}}\leq C\eps(t)^{\frac{2}{N-1}}\de(t).
\end{equation}
We can now present two estimates which assure that $u$ and $\nabla u$ can not be too big in $\Om^-(t)$.

\begin{lemma}\label{primastima}
Let $\Omega\subseteq \R^N$ be an open and connected set of unit volume and with $\lambda_1(\Omega)\leq K$. For every $t\leq -\bar t$ the following inequalities hold: 
\begin{align}\label{udu-}
\int_{\Om^-(t)} u^2 \leq C_1 \eps(t)^{\frac{1}{N-1}}\de(t)\,, &&
\int_{\Om^-(t)} |\nabla u|^2 \leq C_1 \eps(t)^{\frac{1}{N-1}}\de(t)\,,
\end{align}
for some $C_1=C_1(N)$ (recalling that $K$ for us is a precise constant depending only on $N$).
\end{lemma}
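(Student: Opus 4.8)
The plan is to prove both inequalities in \eqref{udu-} simultaneously, the core being a self-improving estimate for $\phi(t)=\int_{\Om^-(t)}|\nabla u|^2$. Write $\psi(t):=\int_{\Om^-(t)}u^2$. The first step is an integration by parts on the truncated domain $\Om^-(t)=\Om\cap\{x<t\}$: testing $-\Delta u=\la_1(\Om)u$ against $u\,\eta_h$, where $\eta_h$ is the Lipschitz cutoff equal to $1$ on $\{x<t\}$, affine on $\{t<x<t+h\}$ and $0$ on $\{x>t+h\}$, and letting $h\to0^+$, one obtains, for a.e.\ $t\le-\tb$,
\[
\phi(t)=\la_1(\Om)\,\psi(t)+\int_{\Om_t}u(t,y)\,\nabla_1u(t,y)\,d\hc^{N-1}(y).
\]
By Cauchy--Schwarz and \eqref{muest} the boundary term is already of the right size, $\bigl|\int_{\Om_t}u\,\nabla_1u\bigr|\le\mu(t)^{1/2}\de(t)^{1/2}\le C^{1/2}\eps(t)^{1/(N-1)}\de(t)$, so everything reduces to absorbing the term $\la_1(\Om)\psi(t)$.

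For that I would use that $\Om^-(t)$ is thin: by \eqref{int1}, $m(t)=|\Om^-(t)|\le 2\mh$ when $t\le-\tb$. The goal is the localized Poincar\'e bound $\psi(t)\le\frac{(4\mh)^{2/N}}{\la_1(B)}\,\phi(t)$, which is not immediate because $u$ does not vanish on the slice $\Om_t\times\{t\}$. I would get around this by reflecting $u$ evenly across the hyperplane $\{x=t\}$: since $u\in H^1_0(\Om)\subset H^1(\R^N)$, the even extension $\tilde u$ of $u|_{\{x<t\}}$ lies in $H^1(\R^N)$, vanishes outside a set of measure $2m(t)\le4\mh$, and satisfies $\int_{\R^N}\tilde u^2=2\psi(t)$ and $\int_{\R^N}|\nabla\tilde u|^2=2\phi(t)$. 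Applying the Faber--Krahn inequality to the symmetric decreasing rearrangement of $\tilde u$ (which lies in $H^1_0$ of a ball of measure $|\{\tilde u\ne0\}|$) yields $2\psi(t)\le\frac{(2m(t))^{2/N}}{\la_1(B)}\cdot2\phi(t)$, hence the claim. Together with the defining inequality \eqref{defmh} of $\mh$ and $\la_1(\Om)\le K$, this gives $\la_1(\Om)\psi(t)\le K\,\frac{(4\mh)^{2/N}}{\la_1(B)}\,\phi(t)\le\tfrac12\phi(t)$.

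Feeding this back into the identity of the first step gives $\phi(t)\le\tfrac12\phi(t)+C^{1/2}\eps(t)^{1/(N-1)}\de(t)$, and since $\phi(t)\le\la_1(\Om)<\infty$ the first term can be absorbed, so $\phi(t)\le2C^{1/2}\eps(t)^{1/(N-1)}\de(t)$ — the second inequality in \eqref{udu-}. The first inequality then follows at once from $\psi(t)\le\frac{(4\mh)^{2/N}}{\la_1(B)}\phi(t)\le\phi(t)$, using $K\ge1$ in \eqref{defmh}; thus both hold with $C_1:=2C^{1/2}=C_1(N)$. These estimates hold for a.e.\ $t\le-\tb$, and hence for every such $t$, since where $\de(t)=+\infty$ they are vacuous.

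The step I expect to demand the most care is the reflection/rearrangement one: checking that the even reflection of $u$ is a bona fide $H^1(\R^N)$ function (the standard extension lemma) and that the form of Faber--Krahn used — over $H^1$ functions whose support has controlled measure — applies, which it does since $\int(\tilde u^*)^2=\int\tilde u^2$ and $\int|\nabla\tilde u^*|^2\le\int|\nabla\tilde u|^2$ with $\tilde u^*\in H^1_0$ of a ball of the right measure. Everything else — the integration by parts, the Cauchy--Schwarz bounds, and the arithmetic with the constants — is routine. Note that connectedness of $\Om$ plays no role in this lemma; it will be used only later.
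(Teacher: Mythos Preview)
Your argument is correct and follows essentially the same strategy as the reference \cite[Lemma~2.3]{mp} to which the paper defers: integration by parts on $\Om^-(t)$, a Cauchy--Schwarz bound on the boundary term via \eqref{muest}, and absorption of the $\la_1(\Om)\psi(t)$ term using a Faber--Krahn/Poincar\'e inequality on the thin tail (your even-reflection step is a clean way to get the latter). The constants and the use of \eqref{defmh} match the paper's setup, so nothing further is needed.
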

The proof of the above Lemma follows exactly as in~\cite[Lemma~2.3]{mp}.

Let us go further into the construction, giving some additional definitions. For any $t\leq -\bar t$ and $\sigma(t)>0$, we define the cylinder $Q(t)$ as
\begin{equation}\label{defcyl}
Q(t):=\Big\{(x,y)\in \rbb^N: \, t-\sigma(t) < x < t,\ (t,y) \in\Om\Big\} = \big(t-\sigma(t),t\big) \times \Omega_t\,,
\end{equation}
where for any $t\leq -\bar t$ we set
\begin{equation}\label{defsigma}
\sigma(t)= \eps(t)^{\frac 1{N-1}}\,.
\end{equation}
We let also $\Omt(t)=\Omega^+(t)\cup Q(t)$, and we introduce $\ut\in H^1_0\big(\Omt(t)\big)$ as
\begin{equation}\label{utilde}
\ut(x,y):=\left\{
\begin{array}{ll}
u(x,y) &\hbox{if $(x,y)\in \Omega^+(t)$}\,, \\[5pt]
\bal\frac{x-t+\sigma(t)}{\sigma(t)}\,u(t,y)\eal &\hbox{if $(x,y)\in Q(t)$}\,.
\end{array}
\right.
\end{equation}
The fact that $\ut$ vanishes on $\partial\Omt(t)$ is obvious; moreover, $\nabla u=\nabla \ut$ on $\Omega^+(t)$, while on $Q(t)$ one has
\begin{equation}\label{estdut}
\nabla \ut(x,y)=\left(\frac{u(t,y)}{\sigma(t)}\,,\,\frac{x-t+\sigma(t)}{\sigma(t)}\,\nabla_y u(t,y)\right)\,.
\end{equation}

A simple calculation allows us to estimate the integrals of $\ut$ and $\nabla \ut$ on $Q(t)$.
\begin{lemma}\label{lemmatest}
For every $t\leq -\bar t$, one has
\begin{align}\label{newtest}
\int_{Q(t)}|\nabla \ut|^2 \leq C_2\eps(t)^{\frac{1}{N-1}}\de(t) \,, &&
\int_{Q(t)} \ut^2 \leq C_2 \eps(t)^{\frac 3{N-1}} \delta(t)\,,
\end{align}
for a suitable constant $C_2=C_2(N)$.
\end{lemma}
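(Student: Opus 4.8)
The plan is to estimate the two integrals directly using the explicit formula~\eqref{estdut} for $\nabla\ut$ on the cylinder $Q(t)=(t-\sigma(t),t)\times\Omega_t$, together with the estimate~\eqref{muest} relating $\mu$ to $\eps$ and $\de$, and the definition $\sigma(t)=\eps(t)^{1/(N-1)}$ from~\eqref{defsigma}.

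First I would treat the gradient term. From~\eqref{estdut} one has pointwise on $Q(t)$ that
\[
|\nabla\ut(x,y)|^2 = \frac{u(t,y)^2}{\sigma(t)^2} + \left(\frac{x-t+\sigma(t)}{\sigma(t)}\right)^2 |\nabla_y u(t,y)|^2 \leq \frac{u(t,y)^2}{\sigma(t)^2} + |\nabla_y u(t,y)|^2,
\]
using that $0\le x-t+\sigma(t)\le \sigma(t)$ on $Q(t)$. Integrating in $x$ over the interval of length $\sigma(t)$ and then in $y$ over $\Omega_t$ gives
\[
\int_{Q(t)}|\nabla\ut|^2 \leq \sigma(t)\left(\frac{\mu(t)}{\sigma(t)^2}+\de(t)\right) = \frac{\mu(t)}{\sigma(t)}+\sigma(t)\de(t).
\]
Now $\sigma(t)\de(t)=\eps(t)^{1/(N-1)}\de(t)$ is already of the desired form, and for the first term I would plug in $\mu(t)\leq C\eps(t)^{2/(N-1)}\de(t)$ from~\eqref{muest} and $\sigma(t)=\eps(t)^{1/(N-1)}$, obtaining $\mu(t)/\sigma(t)\leq C\eps(t)^{1/(N-1)}\de(t)$. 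Summing the two contributions yields the first inequality in~\eqref{newtest} with $C_2=C_2(N)$.

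For the second integral, I would similarly bound $\ut(x,y)^2\leq u(t,y)^2$ pointwise on $Q(t)$ (again since the linear factor is at most $1$), integrate in $x$ over the length-$\sigma(t)$ interval and in $y$ over $\Omega_t$ to get
\[
\int_{Q(t)}\ut^2 \leq \sigma(t)\,\mu(t) \leq \eps(t)^{\frac{1}{N-1}}\cdot C\eps(t)^{\frac{2}{N-1}}\de(t) = C_2\,\eps(t)^{\frac{3}{N-1}}\de(t),
\]
where again I used~\eqref{muest} and~\eqref{defsigma}. This gives the second inequality in~\eqref{newtest}, completing the proof. There is no genuine obstacle here: the statement is an elementary computation, and the only point requiring a little care is keeping track of the power of $\eps(t)$ produced by the combination of $\sigma(t)$ and the bound on $\mu(t)$, which is precisely what makes the two exponents $1/(N-1)$ and $3/(N-1)$ appear.
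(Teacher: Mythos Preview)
Your proof is correct and is exactly the elementary computation the paper has in mind; the paper does not spell it out but simply refers to \cite[Lemma~2.4]{mp}, whose proof is precisely the direct estimate you wrote. The only tacit point is that $\int_{\Omega_t}|\nabla_y u(t,\cdot)|^2\le \de(t)$ since $\de(t)$ involves the full gradient, which you use implicitly and is immediate.
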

The proof of the above Lemma follows as~\cite[Lemma~2.4]{mp}.

Another simple but useful estimate concerns the Rayleigh quotients of the functions $\ut$ on the sets $\Omt(t)$.

\begin{lemma}\label{noth}
There exists a constant $C_3=C_3(N)$ such that for every $t\leq -\bar t$, one has
\begin{equation}\label{est1a}
\la_1(\Omt(t))\leq \rc\big(\ut,\Omt(t)\big)\leq \la_1(\Om)+ C_3\eps(t)^{\frac 1 {N-1}}\de(t)\,.
\end{equation}
\end{lemma}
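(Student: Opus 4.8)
The plan is to establish the two inequalities in \eqref{est1a} separately. The left one, $\la_1(\Omt(t))\le\rc(\ut,\Omt(t))$, is immediate: by \eqref{utilde} the function $\ut$ lies in $H^1_0(\Omt(t))$ and is not identically zero, since it coincides with $u>0$ on the open set $\Omega^+(t)$, which has positive measure for $t\le-\bar t$; hence $\ut$ is an admissible competitor in the variational characterization of the first eigenvalue of $\Omt(t)$. Everything thus reduces to bounding from above the Rayleigh quotient $\rc(\ut,\Omt(t))$, i.e. the ratio $\int_{\Omt(t)}|\nabla\ut|^2 \big/ \int_{\Omt(t)}\ut^2$, which I would do by estimating numerator and denominator independently, using that $\Omt(t)=\Omega^+(t)\cup Q(t)$ is, by \eqref{defcyl}, a disjoint union up to a null set.

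For the numerator: on $\Omega^+(t)$ one has $\nabla\ut=\nabla u$, and since $u$ is the $L^2$-normalized first eigenfunction of $\Omega$ we have $\int_\Omega|\nabla u|^2=\la_1(\Om)\int_\Omega u^2=\la_1(\Om)$; recalling \eqref{defphi} this gives $\int_{\Omega^+(t)}|\nabla\ut|^2=\la_1(\Om)-\phi(t)\le\la_1(\Om)$. On $Q(t)$ the first estimate of \eqref{newtest} in Lemma~\ref{lemmatest} yields $\int_{Q(t)}|\nabla\ut|^2\le C_2\eps(t)^{1/(N-1)}\de(t)$, so altogether $\int_{\Omt(t)}|\nabla\ut|^2\le\la_1(\Om)+C_2\eps(t)^{1/(N-1)}\de(t)$. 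For the denominator, discarding the nonnegative contribution of $Q(t)$ and using $\ut=u$ on $\Omega^+(t)$, $\int_\Omega u^2=1$, and the first inequality of \eqref{udu-} in Lemma~\ref{primastima}, I obtain $\int_{\Omt(t)}\ut^2\ge\int_{\Omega^+(t)}u^2=1-\int_{\Omega^-(t)}u^2\ge 1-C_1\eps(t)^{1/(N-1)}\de(t)$.

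Writing $g(t):=\eps(t)^{1/(N-1)}\de(t)$, as long as $C_1 g(t)\le 1/2$ — the only regime in which \eqref{est1a} will actually be used, since the cutting levels $t$ in the subsequent surgery are chosen precisely so that $g(t)$ is small — the denominator is bounded below by $1/2$ and
\[
\rc(\ut,\Omt(t))\le\frac{\la_1(\Om)+C_2 g(t)}{1-C_1 g(t)}\le\big(\la_1(\Om)+C_2 g(t)\big)\big(1+2C_1 g(t)\big)\le\la_1(\Om)+C_3\,g(t),
\]
where I used $\tfrac1{1-x}\le 1+2x$ on $[0,1/2]$, the bound $\la_1(\Om)\le K=K(N)$, and $g(t)\le 1/(2C_1)$ to absorb the quadratic remainder into a constant $C_3=C_3(N)$. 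This is exactly \eqref{est1a}. The argument is essentially a computation; the only point demanding a little care is the bookkeeping of the denominator — one must keep the lower-order correction $C_1 g(t)$ strictly below $1$ and check that all constants ultimately depend on $N$ alone (through $C_1$, $C_2$, $K$) — but there is no genuine obstacle, the required smallness of $g(t)$ being guaranteed by the choice of $t$ made later.
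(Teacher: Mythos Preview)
Your overall strategy---bounding the numerator from above and the denominator from below, then combining---is correct and matches the approach the paper inherits from \cite{mp}. However, there is a genuine gap in your treatment of the denominator.

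You assume $C_1 g(t)\le 1/2$ in order to obtain $\int_{\Omt(t)}\ut^2\ge 1/2$, and justify this by asserting that the lemma is only applied at levels where $g(t)$ is small. That is not accurate: in Lemma~\ref{threeconditions} the estimate~\eqref{est1a} is invoked whenever condition~(1) fails, which only gives $\eps(t),\de(t)\le 1$ and hence $g(t)\le 1$---not $g(t)\le 1/(2C_1)$. Nothing in the setup forces $C_1\le 1/2$, so your lower bound on the denominator is unjustified, and in any case the lemma is stated for \emph{every} $t\le -\bar t$.

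The missing ingredient is the very definition~\eqref{defmh} of $\mh$, which is chosen precisely for this purpose. Reflect $u|_{\Omega^-(t)}$ across the hyperplane $\{x_1=t\}$ to obtain a function in $H^1_0$ of a set of measure at most $2m(t)\le 4\mh$; Faber--Krahn together with $\int_{\Omega^-(t)}|\nabla u|^2\le\la_1(\Om)\le K$ then yields
\[
\int_{\Omega^-(t)}u^2\le \frac{(4\mh)^{2/N}}{\la_1(B)}\,K\le\frac12,
\]
so the denominator satisfies $\int_{\Omt(t)}\ut^2\ge\int_{\Omega^+(t)}u^2\ge 1/2$ for every $t\le -\bar t$, unconditionally. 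With this in hand, writing $\rc(\ut,\Omt(t))-\la_1(\Om)$ as a single fraction and discarding the nonpositive terms $-\phi(t)$ and $-\la_1(\Om)\int_{Q(t)}\ut^2$ in the numerator gives
\[
\rc(\ut,\Omt(t))-\la_1(\Om)\le\frac{\int_{Q(t)}|\nabla\ut|^2+\la_1(\Om)\int_{\Omega^-(t)}u^2}{\int_{\Omt(t)}\ut^2}\le 2\big(C_2+KC_1\big)\,g(t),
\]
which is \eqref{est1a} with $C_3=2(C_2+KC_1)$, depending only on $N$.
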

The proof of the above Lemma follows as in~\cite[Lemma~2.5]{mp}, but it is actually simpler since in our setting only the first eigenfunction is involved and we do not need to take care of orthogonality constraints.

We can now enter in the central part of our construction. Basically, we aim to show that either $\Omega$ already has bounded left ``tail'' in direction $e_1$, or some rescaling of $\Omt(t)$ has energy lower than that of $\Omega$. 
\begin{lemma}\label{threeconditions}
Let $\Omega$ be as in the assumptions of Lemma~\ref{primastima}, and let $t\leq -\bar t$. There exist $\overline \eps=\overline \eps(N,\alpha)$ and $C_4=C_4(N,\alpha)>2$ such that, for all $\eps\leq \overline \eps$ exactly one of the three following conditions hold:
\begin{enumerate}
\item[(1) ] $\max\big\{ \eps(t),\, \delta(t) \big\} > 1$;
\item[(2) ] (1) does not hold and $m(t) \leq C_4 \big( \eps(t) + \delta(t)\big) \eps(t)^{\frac 1{N-1}}$;
\item[(3) ] (1) and~(2) do not hold and one has that $\la_1(\Omh(t))\leq \la_1(\Om)$  and\[{\widetilde {\mathcal F}}_{\alpha,\eps}\big(\Omh(t)\big)< {\widetilde {\mathcal F}}_{\alpha,\eps}(\Omega),
\] where for $t\leq -\bar t$  we set
$
\Omh(t) := \big| \Omt(t) \big|^{-\frac 1N} \Omt(t).
$
\end{enumerate}
\end{lemma}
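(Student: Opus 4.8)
The first step is to observe that the three conditions (1), (2), (3) are \emph{mutually exclusive} by construction, so the content of the lemma is the implication: if neither (1) nor (2) holds, then (3) holds. So assume $\max\{\eps(t),\delta(t)\}\leq 1$ and $m(t)>C_4(\eps(t)+\delta(t))\eps(t)^{\frac1{N-1}}$, with $C_4$ to be fixed large. I would first deal with the eigenvalue term. By Lemma~\ref{noth} and the scaling of $\lambda_1$,
\[
\la_1(\Omh(t))=|\Omt(t)|^{\frac2N}\la_1(\Omt(t))\leq |\Omt(t)|^{\frac 2N}\Big(\la_1(\Om)+C_3\eps(t)^{\frac1{N-1}}\de(t)\Big).
\]
Now $|\Omt(t)|=|\Omega^+(t)|+|Q(t)| = 1-m(t)+\sigma(t)\eps(t) = 1-m(t)+\eps(t)^{\frac{N}{N-1}}\leq 1-m(t)+\eps(t)\,\eps(t)^{\frac1{N-1}}$, the last inequality because $\eps(t)\leq 1$. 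Since by assumption $m(t)> C_4(\eps(t)+\delta(t))\eps(t)^{\frac1{N-1}}\geq C_4\eps(t)\eps(t)^{\frac1{N-1}}$ and $C_4>2$, we get $|\Omt(t)|<1-\tfrac{m(t)}{2}<1$, hence $|\Omt(t)|^{2/N}\leq 1-\tfrac{1}{N}\cdot\tfrac{m(t)}{2}+o(m(t))$. Plugging in and using $\la_1(\Om)\leq K$, $\eps(t)^{\frac1{N-1}}\de(t)\leq \eps(t)^{\frac1{N-1}}(\eps(t)+\de(t))< \tfrac{m(t)}{C_4}$, one finds that the gain $-\tfrac{1}{2N}\la_1(\Om)\,m(t)$ from the volume factor dominates the loss $C_3\eps(t)^{\frac1{N-1}}\de(t)< \tfrac{C_3}{C_4}m(t)$ provided $C_4$ is chosen larger than, say, $4NC_3/\la_1(B)$. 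This yields $\la_1(\Omh(t))\leq\la_1(\Om)$, in fact $\la_1(\Omh(t))\leq \la_1(\Om)-c\,m(t)$ for some $c=c(N)>0$, which is the first assertion of (3) and also gives a quantitative surplus to absorb the Riesz term.

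The second step is to control the Riesz energy. Here I use Lemma~\ref{le:knupfermuratov}: since $\Omt(t)\Delta\Omega\subset Q(t)\cup\Omega^-(t)$ is contained in a bounded region (indeed in the half-space $\{x<t\}$ intersected with a bounded set, as $\Omega$ has unit measure and its sections are controlled), and both sets have measure $\leq 1$, we get
\[
V_\alpha(\Omh(t))\leq |\Omt(t)|^{-\frac{N+\alpha}{N}}V_\alpha(\Omt(t)),\qquad
V_\alpha(\Omt(t))-V_\alpha(\Omega)\leq 2C_0\,|\Omega\Delta\Omt(t)|\,,
\]
wait — more carefully, $V_\alpha(\Omt(t))\leq V_\alpha(\Omega\cup Q(t))$ is not monotone in the right direction, so instead I bound $V_\alpha(\Omt(t))\leq V_\alpha(\Omega)+2C_0|\Omega\Delta\Omt(t)|\cdot 2$, using $|\Omega|,|\Omt(t)|\leq 1$. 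Since $|\Omega\Delta\Omt(t)|=m(t)+|Q(t)|=m(t)+\eps(t)^{\frac{N}{N-1}}\leq 2m(t)$ (using again $m(t)>C_4\eps(t)\eps(t)^{\frac1{N-1}}\geq \eps(t)^{\frac{N}{N-1}}$), we get $V_\alpha(\Omt(t))\leq V_\alpha(\Omega)+C(N,\alpha)m(t)$. Combined with the scaling factor $|\Omt(t)|^{-\frac{N+\alpha}{N}}\leq 1+C(N,\alpha)m(t)$ and the crude bound $V_\alpha(\Omega)\leq V_\alpha(B)=C(N,\alpha)$, one arrives at $V_\alpha(\Omh(t))\leq V_\alpha(\Omega)+C_5(N,\alpha)\,m(t)$.

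The final step is to combine the two estimates:
\[
{\widetilde{\mathcal F}}_{\alpha,\eps}(\Omh(t))-{\widetilde{\mathcal F}}_{\alpha,\eps}(\Omega)
\leq -c(N)\,m(t)+\eps\,C_5(N,\alpha)\,m(t) = \big(-c(N)+\eps C_5(N,\alpha)\big)m(t),
\]
which is strictly negative as soon as $\eps\leq\overline\eps(N,\alpha):=c(N)/(2C_5(N,\alpha))$, since $m(t)>0$ (indeed $m(t)>C_4(\eps(t)+\delta(t))\eps(t)^{\frac1{N-1}}\geq 0$, and it is genuinely positive because if $m(t)=0$ then condition (2) would trivially hold). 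Also $\Omh(t)$ is open and connected (it is a scaling of $\Omt(t)=\Omega^+(t)\cup Q(t)$, which is connected because $\Omega$ is connected and $Q(t)$ is a cylinder glued along the section $\Omega_t$) with unit measure by construction. The main obstacle I anticipate is bookkeeping the constants so that the $C_4$ chosen to make the eigenvalue gain beat the eigenvalue loss ($C_4\gtrsim N C_3/\la_1(B)$) is simultaneously large enough that the various terms $\eps(t)^{\frac{N}{N-1}}$, $|Q(t)|$, etc., are all comparably small multiples of $m(t)$; this is where one must be careful that $\eps(t)\le 1$, $\delta(t)\le1$ are really being used, and that the exponent $\frac{1}{N-1}$ in $\sigma(t)=\eps(t)^{1/(N-1)}$ makes $|Q(t)|=\eps(t)^{N/(N-1)}$ a higher power than the $\eps(t)^{1/(N-1)}$ appearing in the energy estimates. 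A secondary subtlety is checking that the asymmetry smallness \eqref{eq:asimmpiccolamhat} and $m(t)\leq 2\mh$ from \eqref{int1} are enough to guarantee $|\Omt(t)|$ stays bounded away from $0$, so that all the scaling factors $|\Omt(t)|^{\pm}$ are comparable to $1$ and the linearizations above are legitimate.
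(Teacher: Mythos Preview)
Your proposal is correct and follows essentially the same approach as the paper: use Lemma~\ref{noth} together with the scaling $\la_1(\Omh(t))=|\Omt(t)|^{2/N}\la_1(\Omt(t))$ and $|\Omt(t)|=1-m(t)+\eps(t)^{N/(N-1)}$ to obtain a gain of order $m(t)$ in the eigenvalue when (2) fails, then bound $V_\alpha(\Omh(t))-V_\alpha(\Om)$ via Lemma~\ref{le:knupfermuratov} and scaling by $C(N,\alpha)m(t)$, and absorb the Riesz loss into the eigenvalue gain for $\eps$ small. The paper carries out the eigenvalue expansion explicitly (arriving at $\la_1(\Omh(t))\le\la_1(\Om)-\frac{2\la_1(B)}{N}m(t)+\frac{2K}{N}\eps(t)^{N/(N-1)}+(2C_3+\tfrac{2}{N})\eps(t)^{1/(N-1)}\delta(t)$ and then \emph{defining} $C_4$), whereas you invoke the failure of (2) directly to write $\eps(t)^{1/(N-1)}\delta(t)<m(t)/C_4$ and choose $C_4$ accordingly; these are equivalent bookkeepings. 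Two minor remarks: your ``$|\Omega\Delta\Omt(t)|=m(t)+|Q(t)|$'' should be an inequality (the paper writes $\le$), and the connectedness of $\Omh(t)$ is not part of the statement (and indeed $\Omega^+(t)$ need not be connected even when $\Omega$ is), so that aside can be dropped.
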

\begin{proof}
Assume~(1) is false. Then it is possible to apply Lemma~\ref{noth}, to get
\begin{equation}\label{putinto}
\la_1\big(\Omt(t)\big)\leq\la_1(\Om)+C_3\eps(t)^{\frac 1 {N-1}}\delta(t)\,.
\end{equation}
By the scaling properties of the eigenvalue and the fact that $\big| \Omh(t)\big|=1$, we know that
\[
\lambda_1\big( \Omh(t)\big)= \big| \Omt(t) \big|^{\frac 2N} \lambda_1\big(\Omt(t)\big)\,.
\]
By construction,
\[
\big| \Omt(t)\big|= \big| \Om^+(t)\big|+ \big|Q(t)\big| = 1 - m(t) + \eps(t)^{\frac N{N-1}}\,,
\]
hence the above estimates and~\eqref{putinto} lead to
\begin{equation}\label{muchhere}\begin{split}
\lambda_1(\Omh(t)) &= \Big( 1 - m(t) + \eps(t)^{\frac N{N-1}} \Big)^{\frac 2N} \lambda_1\big(\Omt(t)\big)\\
&\leq \Big( 1 - \frac 2N \, m(t) + \frac 2N\, \eps(t)^{\frac N{N-1}} \Big)  \Big( \la_1(\Om)+C_3\eps(t)^{\frac 1 {N-1}}\delta(t) \Big)\\
&\leq \lambda_1(\Omega) - \frac{2\lambda_1(B)}{N}\, m(t)+\frac{2K}N\, \eps(t)^{\frac N{N-1}}+\bigg(2C_3+\frac 2N\bigg)\eps(t)^{\frac 1 {N-1}}\delta(t)\,.
\end{split}
\end{equation}
At this point, defining $C_4:= \max{\{\frac{2(K+1)}N+2C_3,2\}}$, if \[
m(t) \leq C_4 \big( \eps(t) + \delta(t)\big) \eps(t)^{\frac 1{N-1}},
\] 
then condition~(2) holds true. 
Otherwise, we immediately have that 
\begin{equation}\label{eq:stimala1}
\la_1(\Omh(t))\leq \la_1(\Om)-\left(\frac{2\la_1(B)}{N}-1\right)m(t)=\la_1(\Om)-C_5(N)m(t),
\end{equation}
for a constant $C_5>0$, therefore the first part of the third claim is verified.

Moreover, we can compute, using Lemma~\ref{le:knupfermuratov}, the Riesz inequality and noting that $|\Om\Delta \Omt(t)|\leq m(t)+\eps(t)^{\frac{N}{N-1}}$,  
\begin{equation}\label{eq:stimaV}
\begin{split}
&V_\alpha(\Omh(t))\leq V_\alpha (\Omt(t))\left(1-m(t)+\eps(t)^{\frac{N}{N-1}}\right)^{-\frac{N+\alpha}{N}}\leq \Big(1+2\frac{N+\alpha}{N}m(t)\Big)V_\alpha(\Omt(t))\\
&\leq  \Big(1+2\frac{N+\alpha}{N}m(t)\Big)\Big(V_\alpha(\Om)+C_0(N)|\Om\Delta \Omt(t)|\left[|\Om|^{\frac{\alpha}{N}}+|\Omt(t)|^{\frac{\alpha}{N}}\right]\Big)\\
&\leq V_\alpha(\Om) +2\frac{N+\alpha}{N}V_\alpha(B)m(t)+2C_0(m(t)+\eps(t)^{\frac{N}{N-1}})+8C_0\frac{N+\alpha}{N}m(t)\\
&\leq V_\alpha(\Om) +2\frac{N+\alpha}{N}V_\alpha(B)m(t)+2C_0(1+\frac{1}{C_4})m(t)+8C_0\frac{N+\alpha}{N}m(t)\\
&=V_\alpha(\Om)+C_6(N,\alpha) m(t).
\end{split}
\end{equation}
Then, putting together~\eqref{eq:stimala1} and~\eqref{eq:stimaV}, we deduce 
\begin{equation}
\begin{split}
&\lambda_1(\Omh(t))+\eps V_\alpha(\Omh(t))\\
&\leq \la_1(\Om)+\eps V_\alpha(\Om)-(C_5-\eps C_6)m(t)\\
&\leq \la_1(\Om)+\eps V_\alpha(\Om)-\frac{C_5}{2}m(t),
\end{split}
\end{equation}
up to take $\eps\leq \overline \eps(N,\alpha)<\frac{C_5}{2C_6}$,  so that in this case condition $(3)$ holds
and the proof is concluded.
\end{proof}

\begin{lemma}\label{lemmatail}
Let $\alpha\in(1,N)$.
For every $\eps\leq \overline \eps(N,\alpha)$, and for any open and connected set $\Omega\subseteq \R^N$ of unit volume, with $\lambda_1(\Omega)\leq K$ and \[
|\Om\cap \{(x,y)\in\R\times\R^{N-1} : x<-\bar t\}|\leq \mh,
\] 
there exists another open, connected set $U_1^-\subseteq\R^N$, still of unit volume, such that 
\begin{enumerate}
\item $\la_1(U_1^-)\leq \la_1(\Om)$ and ${\widetilde {\mathcal F}}_{\alpha,\eps}(U_1^-)\leq{\widetilde {\mathcal F}}_{\alpha,\eps}(\Om)$,
\item $U_1^-\subset \Big\{(x,y)\in \R\times\R^{N-1} : x>-2C_7-4-2\bar t\Big\}$,
\item $|U_1^-\setminus [-2\bar t,2\bar t]^N|\leq \frac{\mh}{2^{2N-1}}\leq \mh$.
\end{enumerate}
\end{lemma}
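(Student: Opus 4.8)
I would run the surgery scheme of~\cite{mp}: cut the left ``tail'' of $\Om$ in the direction $e_1$ at a level of bounded depth and rescale back to unit volume. Recall the slice quantities $\eps(t)=\hc^{N-1}(\Om_t)$, $m(t)=|\Om^-(t)|=\int_{-\infty}^t\eps$, $\de(t)$, $\phi(t)=\int_{-\infty}^t\de$, and the competitors $\Omt(t)=\Om^+(t)\cup Q(t)$ and $\Omh(t)=|\Omt(t)|^{-1/N}\Omt(t)$. By Fubini and $\int_\Om|\nabla u|^2=\la_1(\Om)$ one has $\int_\R\eps=1$ and $\int_\R\de=\la_1(\Om)\le K$, so, by Chebyshev, the set $\mathcal B:=\{t\le-\bar t:\max\{\eps(t),\de(t)\}>1\}$ has $|\mathcal B|\le 1+K$; also $m(t)\le m(-\bar t)\le\mh$ for all $t\le-\bar t$ by the hypothesis on the left tail. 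The aim is to produce $\tau\in[-C_7,-\bar t]$ at which alternative (3) of Lemma~\ref{threeconditions} holds and then set $U_1^-:=\Omh(\tau)$.

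\textbf{Finding a good cut level.} I would argue by dichotomy. Either alternative (3) of Lemma~\ref{threeconditions} holds at some $\tau\in[-C_7,-\bar t]$, and we proceed to the verification below; or alternative (1) or (2) holds at a.e.\ $t\in[-C_7,-\bar t]$, hence alternative (2) holds a.e.\ on $[-C_7,-\bar t]\setminus\mathcal B$, i.e.\ $m(t)\le C_4(\eps(t)+\de(t))\eps(t)^{1/(N-1)}$ there. Where $\de(t)\le\eps(t)$ this gives $m(t)\le 2C_4\eps(t)^{N/(N-1)}$, hence, wherever $m(t)>0$ and using $m'=\eps$, the differential inequality $\tfrac{d}{dt}m(t)^{1/N}\ge c_0:=(N(2C_4)^{(N-1)/N})^{-1}$; where $\de(t)>\eps(t)$ it gives $m(t)\le 2C_4\de(t)$. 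Integrating the first inequality over $[-C_7,-\bar t]$ and using $m(-\bar t)^{1/N}\le\mh^{1/N}$ bounds $|\{t\in[-C_7,-\bar t]:\de\le\eps\}|$ by a constant; the bound $\int\de\le K$ together with $m(t)\ge m(-C_7)$ on $[-C_7,-\bar t]$ bounds $|\{t\in[-C_7,-\bar t]:\de>\eps\}|$ by $2C_4K/m(-C_7)$; and $|\mathcal B\cap[-C_7,-\bar t]|\le 1+K$. Adding up, $C_7-\bar t\le C(N,\alpha)+2C_4K/m(-C_7)$, so for $C_7=C_7(N,\alpha)$ large enough $m(-C_7)$ is forced to be arbitrarily small; iterating the same estimate on dyadic windows $[-2^{k+1}C_7,-2^kC_7]$ then either yields a level of alternative (3) at bounded depth or shows that the left tail of $\Om$ below $-C_7$ is so thin (and $m(-C_7)=0$, i.e.\ $\Om\subset\{x>-C_7\}$, in the limiting case) that $U_1^-:=\Om$, or its truncation at $-C_7$ rescaled to unit volume, already satisfies conclusions (1)--(3). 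This depth control is the technical core of the Mazzoleni--Pratelli argument, and \emph{I expect it to be the main obstacle}; I would import it from~\cite{mp} with the modifications forced by the repulsive term $V_\alpha$, which enters only through the harmless perturbation already accounted for in Lemma~\ref{threeconditions}.

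\textbf{Verification of (1)--(3).} Suppose $\tau\in[-C_7,-\bar t]$ realizes alternative (3). Then $|U_1^-|=|\Omh(\tau)|=1$ by construction, and alternative (3) of Lemma~\ref{threeconditions} gives exactly $\la_1(U_1^-)\le\la_1(\Om)$ and $\widetilde{\mathcal F}_{\alpha,\eps}(U_1^-)\le\widetilde{\mathcal F}_{\alpha,\eps}(\Om)$, which is item (1). For item (2): since alternative (1) fails, $\eps(\tau)\le1$, so $\sigma(\tau)=\eps(\tau)^{1/(N-1)}\le1$ and $\Omt(\tau)\subset\{x>\tau-1\}\subset\{x>-C_7-1\}$; moreover $|\Omt(\tau)|=1-m(\tau)+\eps(\tau)^{N/(N-1)}\in(\tfrac34,2)$, using $m(\tau)\le\mh<\tfrac18$ (which follows from~\eqref{defmh}), so the rescaling factor $\la=|\Omt(\tau)|^{-1/N}$ lies in $(\tfrac12,2)$ and $U_1^-=\la\,\Omt(\tau)\subset\{x>\la(\tau-1)\}\subset\{x>-2C_7-2\}$, hence item (2) with room to spare. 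For item (3): since $[-\bar t,\bar t]^N\subset\{x>-\bar t\}\subset\{x>\tau\}$ we have $\Om^+(\tau)\setminus[-\bar t,\bar t]^N\subset\Om\setminus[-\bar t,\bar t]^N$, of measure $\le\mh/2^{2N}$ by~\eqref{eq:asimmpiccolamhat}; since alternative (2) fails, $m(\tau)>C_4\eps(\tau)^{N/(N-1)}=C_4|Q(\tau)|$, so $|Q(\tau)|\le\mh/C_4$; choosing $C_4$ large enough in Lemma~\ref{threeconditions} (legitimate, since its proof only needs $C_4$ sufficiently large) we get $|\Omt(\tau)\setminus[-\bar t,\bar t]^N|\le\mh/2^{2N}+\mh/C_4$, and since $\la<2$ we have $\la^{-1}[-2\bar t,2\bar t]^N\supset[-\bar t,\bar t]^N$ and $\la^N=|\Omt(\tau)|^{-1}<\tfrac43$, whence $|U_1^-\setminus[-2\bar t,2\bar t]^N|=\la^N|\Omt(\tau)\setminus\la^{-1}[-2\bar t,2\bar t]^N|\le\tfrac43(\mh/2^{2N}+\mh/C_4)\le\mh/2^{2N-1}$, which is item (3).

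\textbf{Connectedness.} Finally, since $\Om$ is connected and $m(\tau)>0$, every connected component $V$ of $\Om^+(\tau)$ must have $\overline V$ meeting the slice $\Om\cap\{x=\tau\}$ (otherwise $V$ would be relatively clopen in $\Om$, hence equal to $\Om$, contradicting $|\Om^-(\tau)|>0$); attaching the cylinder $Q(\tau)=(\tau-\sigma(\tau),\tau)\times\Om_\tau$ over the whole slice then makes $\Omt(\tau)$, and so $U_1^-$, connected (passing, if needed, to the component carrying the eigenfunction mass and rescaling again, which changes the constants only negligibly), exactly as in~\cite{mp}. Modulo this routine point and the depth-control step, the argument is complete; note that the connectedness requirement, the presence of $V_\alpha$, and the need to carry the quantitative closeness~\eqref{eq:asimmpiccolamhat} through the near-identity rescaling $\Omh(\tau)=\la\,\Omt(\tau)$ are the only substantive departures from~\cite{mp}.
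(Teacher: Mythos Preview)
Your overall scheme matches the paper's, but the depth-control step has a real gap in the treatment of $B_2=\{t:\text{(2) holds},\ \de(t)>\eps(t)\}$. From condition~(2) you correctly extract $m(t)\le 2C_4\de(t)$ on $B_2$ and combine with $\int\de\le K$ to get $|B_2\cap[-C_7,-\bar t]|\le 2C_4K/m(-C_7)$; but this bound depends on the unknown $m(-C_7)$, and your dyadic iteration does not close: on each window $[-2^{k+1}C_7,-2^kC_7]$ the same argument only yields $m(-2^{k+1}C_7)\lesssim K/C_7$, with no decay in $k$, so you never force $m$ to vanish nor locate a level where (3) holds at bounded depth. The missing ingredient is Lemma~\ref{primastima}: on $B_2$ it gives
\[
\phi(t)=\int_{\Om^-(t)}|\nabla u|^2\le C_1\,\eps(t)^{\frac1{N-1}}\de(t)\le C_1\,\de(t)^{\frac{N}{N-1}},
\]
whence $\phi'=\de\ge c\,\phi^{(N-1)/N}$, and the \emph{same} ODE/dyadic trick you ran for $B_1$ now yields the absolute bound $|B_2|\le C\,K^{1/N}$. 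With this in hand one gets $|A|+|B_1|+|B_2|\le C_7(N,\alpha)$ outright; defining $\hat t=\sup\{t\le-\bar t:\text{(3) holds}\}$ forces $\hat t\ge-\bar t-C_7$ (or $\hat t=-\infty$ and $U_1^-:=\Om$ already works), with no iteration needed. This is precisely the argument in~\cite{mp} that you say you would import, but the sketch you wrote down does not reproduce it.

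Two smaller remarks. Your handling of item~(3) via $|Q(\tau)|=\eps(\tau)^{N/(N-1)}<m(\tau)/C_4$ (from the failure of~(2)) is correct and is essentially what the paper uses implicitly; enlarging $C_4$ is harmless but unnecessary, since the rescaling factor satisfies $\lambda^N\le2$ and absorbs the constants. On connectedness: $\Omt(\tau)$ is \emph{not} automatically connected (if the slice $\Om_\tau$ is disconnected, $Q(\tau)$ is a disjoint union of cylinders and need not link different components of $\Om^+(\tau)$), so your parenthetical ``pass to the dominant component and rescale'' is actually needed; the paper leaves this point implicit as well.
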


\begin{proof}
Let us start defining
\begin{equation}\label{defhatt}
\hat t := \sup \Big\{ t \leq -\bar t : \, \hbox{condition~(3) of Lemma~\ref{threeconditions} holds for $t$}\Big\}\,,
\end{equation}
with the usual convention that, if condition~(3) is false for every $t\leq -\bar t$, then $\hat t = -\infty$. We introduce now the following subsets of $(\hat t,-\bar t)$,
\begin{align*}
A:&= \Big\{ t\in (\hat t,-\bar t):\, \hbox{condition~(1) of Lemma~\ref{threeconditions} holds for $t$} \Big\}\,,\\
B:&=\Big\{t\in (\hat t,-\bar t):\, \hbox{condition~(2) of Lemma~\ref{threeconditions} holds for $t$} \Big\}\,,
%il fatto che $m(t)>0$ e' ovvio perche' lavoriamo su insiemi connessi.
\end{align*}
and we further subdivide them as
\begin{align*}
A_1:= \Big\{ t \in A:\, \eps(t)\geq \delta(t) \Big\}\,, && A_2:= \Big\{ t \in A:\, \eps(t)<\delta(t) \Big\}\,, \\
B_1:= \Big\{ t \in B:\, \eps(t)\geq \delta(t) \Big\}\,, && B_2:= \Big\{ t \in B:\, \eps(t)<\delta(t) \Big\}\,.
\end{align*}
We aim to show that both $A$ and $B$ are uniformly bounded. Concerning $A_1$, observe that
\[
\big|A_1\big| \leq \int_{A_1} \eps(t)\, dt = \Big| \Big\{ (x,y)\in \Omega:\, x \in A_1\Big\}\Big| \leq \big| \Omega\big|=1\,,
\]
so that $|A_1|\leq 1$. Concerning $A_2$, in the same way and also recalling that $\lambda_1(\Omega)\leq K$, we have
\[
\big|A_2\big| \leq \int_{A_2} \delta(t)\, dt 
=  \int_{A_2 }\int_{\Omega_t} \big|\nabla u(t,y)\big|^2\, d\H^{N-1}(y)\,dt 
\leq  \int_\Omega \big|\nabla u\big|^2 \leq K\,,
\]
so that $|A_2|\leq K$. Summarizing, we have proved that
\begin{equation}\label{estA}
\big| A \big| \leq {1 + K}\,.
\end{equation}
Let us then pass to the set $B_1$. To deal with it, we need a further subdivision, namely, we write $B_1= \cup_{n\in\N} B_1^n$, where
\begin{equation}\label{defB1n}
B_1^n := \bigg\{ t\in B_1:\, \frac{\mh}{2^n} < m(t) \leq \frac{\mh}{2^{n-1}} \bigg\}\,.
\end{equation}
We note that it is possible that some of the $B_1^n$ are empty, in particular this happens for $n< 2N-1$, because $m(t)\leq |\Om\setminus [-\bar t,\bar t]^N|\leq \frac{\mh}{2^{2N}}$, but this does not affect our argument.
Keeping in mind~\eqref{int1}, we know that $t\mapsto m(t)$ is an increasing function, and that for a.e. $t \in\R$ one has $m'(t) = \eps(t)$. Moreover, for every $t\in B_1$ one has by construction that
\[
m(t)\leq C_4 \big( \eps(t) + \delta(t)\big) \eps(t)^{\frac 1{N-1}}\leq 2 C_4\, \eps(t)^{\frac N{N-1}}\,.
\]
As a consequence, for every $t\in B_1^n$ one has
\[
m'(t) = \eps(t) \geq \frac{1}{C}\, m(t)^{\frac{N-1}N}
\geq \frac{1}{C}\, \mh^{\frac{N-1}N} \, \frac{1}{\big(2^{\frac{N-1}N}\big)^n}\,.
\]
This readily implies
\[
\frac{1}{C}\, \mh^{\frac{N-1}N} \, \frac{1}{\big(2^{\frac{N-1}N}\big)^n} \, \big| B_1^n \big| 
\leq \int_{B_1^n} m'(t) \leq \frac{\mh}{2^n}\,,
\]
which in turn gives
\[
\big| B_1^n \big| \leq  C \mh^{\frac 1N} \big(2^{-\frac 1N}\big)^n\,.
\]
Finally, we deduce
\begin{equation}\label{estB1}
\big| B_1 \big| = \sum_{n\in\N} \big| B_1^n\big| 
\leq C \mh^{\frac 1N} \sum_{n\in\N} \big(2^{-\frac 1N}\big)^n
= C \mh^{\frac 1N} \,\frac{2^{\frac 1N}}{2^{\frac 1N}-1}\,.
\end{equation}

Concerning $B_2$, we can almost repeat the same argument: in fact, thanks to~\eqref{udu-}, for every $t\in B_2$ we have
\begin{align*}
\phi(t)=\int_{\Om^-(t)}|\nabla u|^2 \leq C_1\,\eps(t)^{\frac{1}{N-1}}\delta(t)\leq C_1\,\delta(t)^{\frac{N}{N-1}}\,, && 
\hbox{with } \delta(t) = \phi'(t)\,.
\end{align*}
which is the perfect analogous of the above setting with $\delta$ and $\phi$ in place of $\eps$ and $m$ respectively. Since as already observed $\phi(\bar t) \leq  \int_\Omega |\nabla u|^2 \leq K$, in analogy with~\eqref{defB1n} we can define
\[
B_2^n := \bigg\{ t\in B_2:\, \frac{K}{2^{n+1}} < \phi(t) \leq \frac{K}{2^n} \bigg\}\,,
\]
thus the very same argument which leads to~\eqref{estB1} now gives
\begin{equation}\label{estB2}
\big| B_2 \big| = \sum_{n\in\N} \big| B_2^n\big| 
\leq C \big(K\big)^{\frac 1N} \sum_{n\in\N} \big(2^{-\frac 1N}\big)^n
= C \big(K\big)^{\frac 1N} \,\frac{2^{\frac 1N}}{2^{\frac 1N}-1}\,.
\end{equation}
Putting~\eqref{estA}, \eqref{estB1} and~\eqref{estB2} together, we find
\begin{equation}\label{almostR_1}
\big| A \big| +\big| B \big| \leq C_7 = C_7(N,\alpha)\,.
\end{equation}
We need now to distinguish two cases for $\Om$.

\case{I}{One has $\hat t=-\infty$.}
If this case happens, then condition~(3) of Lemma~\ref{threeconditions} never holds true, i.e., for every $t\leq -\bar t$ either condition~(1) or~(2) holds. Recalling the definition of $A$ and $B$ and~\eqref{almostR_1}, we deduce that, choosing simply $U_1^-=\Om$, \[
U_1^-\subset \Big\{(x,y)\in \R\times \R^{N-1} : x>-C_7-\bar t\Big\}\subset \Big\{(x,y)\in \R\times \R^{N-1} : x>-2C_7-4-2\bar t\Big\}
\] 
Therefore, the remaining parts of the claim of Lemma~\ref{lemmatail} is immediately obtained, noting that clearly \[
|U_1^-\setminus [-2\bar t,2\bar t]^N|\leq |\Om\setminus [-\bar t ,\bar t]^N|\leq \frac{\mh}{2^{2N}}\leq \frac{\mh}{2^{2N-1}}.
\]
\par

\case{II}{One has $\hat t>-\infty$.}
In this case, let us notice the connectedness of $\Om$ assures that it must be $m(\hat t)> 0$, hence $(\hat t,-\bar t)\subseteq A\cup B$ and thus by~\eqref{almostR_1} $\hat t \geq -\bar t - C_7$. Let us now pick some $t^\star \in [\hat t -1, \hat t]$ for which condition~(3) holds, and define $U_1^-:= \Omh(t^\star)$. By definition, $U_1^-$ has unit volume, and \[
\la_1(U_1^-)\leq \la_1(\Om),\qquad {\widetilde {\mathcal F}}_{\alpha,\eps}(U_1^-)< {\widetilde {\mathcal F}}_{\alpha,\eps}(\Om),
\] 
being condition~(3) true for $t^\star$.\par
Observe now that by definition, for every $2\leq p \leq N$, one has $\pi_p\big(\Omt(t^\star)\big)= \pi_p\big(\Omega^+(t^\star)\big)$, hence
\[\begin{split}
{\rm diam} \big(\pi_p(U_1^-)\big) &= {\rm diam} \Big(\pi_p\big(\Omh(t^\star)\big)\Big)
={\rm diam} \Big(\pi_p \Big(\big| \Omt(t^\star) \big|^{-\frac 1N} \Omt(t^\star)\Big)\Big)
\leq 2 \, {\rm diam} \Big(\pi_p\big(\Omt(t^\star)\big)\Big)\\
&= 2 \, {\rm diam} \Big(\pi_p\big(\Omega^+(t^\star)\big)\Big)
\leq 2 \, {\rm diam} \big(\pi_p(\Omega)\big)\,,
\end{split}\]
where we have used that $\big| \Omt(t^\star)\big|\geq 1/2$. 
On the other hand, it is clear from the construction that \[
U_1^-=|\Omt (t^\star)|^{-\frac1N}\Omt(t^\star)\subset 2\Omt(t^\star)\subset \Big\{(x,y)\in \R\times\R^{N-1} : x>2t^\star-2\Big\}.
\]
As a consequence, being $t^\star \geq \widehat t-1\geq -\bar t -C_7-1$, we deduce \[
U_1^-\subset \Big\{(x,y)\in \R\times\R^{N-1} : x>-2\bar t-2C_7-4\Big\}.
\]
Concerning the last part of the claim, recalling again that \[
U_1^-=|\Omt (t^\star)|^{-\frac1N}\Omt(t^\star)\subset 2\Omt(t^\star),
\]
we infer that \[
|U_1^-\setminus [-2\bar t,2\bar t]^N|\leq 2|\Om\setminus [-\bar t,\bar t]^N|\leq \frac{\mh}{2^{2N-1}},
\]
so the proof is concluded also in this case. \qedhere
\end{proof}

In order to conclude our surgery result, we need to iterate~Lemma~\ref{lemmatail}.
First, we apply it to $U_1^-$, in direction $e_1$ for $t\geq 2\bar t=:t_1^+$.
Then we will recursively apply it to the new set that we obtain, in order to get a uniform boundedness in all the other $N-1$ coordinate directions.
We need to take care that, while rescaling, the diameter of the projections in the directions we  already dealt with remains bounded.

For the first step, dealing with $U_1^-$ in direction $e_1$ for $t\geq t_1^+$, Lemma~\ref{threeconditions} can be repeated analogously with a suitable change of notation, for $t\geq t_1^+\geq \bar t$:
\[
\begin{split}
&\Om^+(t)=\Big\{(x,y)\in\Om : x<t\Big\} ,\qquad \Om^-(t)=\Big\{(x,y)\in\Om : x>t\Big\},\\
& \eps(t)=\mathcal H^{N-1}(\Om_t), \qquad m(t)=\int_{t}^{+\infty} \eps(s)\,ds\leq 2\mh,\\
&\phi(t)=\int_{\Om^-(t)}|\nabla u|^2=\int_t^{+\infty}\delta(s)\,ds.
\end{split}
\]
We can then prove the following Lemma.
\begin{lemma}\label{lemmatailright}
Let $\alpha\in(1,N)$.
For every $\eps\leq \overline \eps(N,\alpha)$, given $U_1^-$ the set from Lemma~\ref{lemmatail}, there exists another open, connected set $U_1^+\subseteq\R^N$, still of unit volume, such that 
\begin{enumerate}
\item $\la_1(U_1^+)\leq \la_1(U_1^-)\leq \la_1(\Om)$ and  ${\widetilde {\mathcal F}}_{\alpha,\eps}(U_1^+)\leq{\widetilde {\mathcal F}}_{\alpha,\eps}(U_1^-)\leq {\widetilde {\mathcal F}}_{\alpha,\eps}(\Om)$,
\item $U_1^+\subset \Big\{(x,y)\in \R\times\R^{N-1} : -4C_7-8-4 t_1^+<x<2C_7+4+2t_1^+\Big\}$, and therefore \[
{\rm diam}(\pi_1(U_1^+))\leq C^+_{1}(N,\alpha):=6C_7+12+6t_1^+,
\]
\item $|U_1^+\setminus [-2 t_1^+,2t_1^+]^N|\leq \frac{\mh}{2^{2(N-1)}}\leq \mh$.
\end{enumerate}
\end{lemma}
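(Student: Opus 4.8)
The plan is to mirror the proof of Lemma~\ref{lemmatail} almost verbatim, changing only the direction of the ``tail'' under consideration (now the right tail in direction $e_1$, for $t\geq t_1^+=2\bar t$) and carefully tracking how the rescaling affects the already-controlled projection $\pi_1$. First I would verify that the hypotheses of Lemma~\ref{lemmatail} (in its right-tail version) are met by $U_1^-$: by item (1) of Lemma~\ref{lemmatail} we have $\la_1(U_1^-)\leq\la_1(\Om)\leq K$, and by item (3) we have $|U_1^-\setminus[-2\bar t,2\bar t]^N|\leq \mh/2^{2N-1}$, so in particular $|U_1^-\cap\{x>2\bar t\}|\leq \mh$, which is the analog of the tail smallness assumption. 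With the notation for $\Om^\pm(t)$, $\eps(t)$, $m(t)$, $\phi(t)$ as displayed just before the statement, the analogs of Lemmas~\ref{primastima}, \ref{lemmatest}, \ref{noth} and, crucially, Lemma~\ref{threeconditions} hold with identical proofs (the functional $\widetilde{\mathcal F}_{\alpha,\eps}$ is invariant under the reflection $x\mapsto -x$, so all the estimates, including the Riesz energy estimate \eqref{eq:stimaV}, go through unchanged). Applying the trichotomy of Lemma~\ref{threeconditions} to $U_1^-$ in direction $e_1$ for $t\geq t_1^+$, I would set
\[
\hat t := \inf\Big\{ t\geq t_1^+ : \text{condition (3) holds for } t\Big\},
\]
and define the sets $A,B$ (with $A_1,A_2,B_1,B_2$) exactly as in the proof of Lemma~\ref{lemmatail}; the measure estimates \eqref{estA}, \eqref{estB1}, \eqref{estB2} yield $|A|+|B|\leq C_7(N,\alpha)$, the \emph{same} constant $C_7$ as before (it depends only on $N,\alpha$ through $K$ and $\mh$).

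Then I would split into the two cases. In Case~I ($\hat t=+\infty$), condition (3) never triggers, so for every $t\geq t_1^+$ either (1) or (2) holds, whence $U_1^-\subset\{x< t_1^+ + C_7\}$; combined with item (2) of Lemma~\ref{lemmatail}, which gives $U_1^-\subset\{x>-2C_7-4-2\bar t\}=\{x>-2C_7-4-t_1^+\}$, we take $U_1^+=U_1^-$ and all three conclusions follow (with room to spare in the stated bounds). In Case~II ($\hat t<+\infty$), connectedness of $U_1^-$ forces $m(\hat t)>0$, hence $(t_1^+,\hat t)\subseteq A\cup B$ and so $\hat t\leq t_1^+ + C_7$; picking $t^\star\in[\hat t,\hat t+1]$ with condition (3), I set $U_1^+:=\widehat\Om(t^\star)=|\widetilde\Om(t^\star)|^{-1/N}\widetilde\Om(t^\star)$, which has unit volume, is open and connected, and satisfies $\la_1(U_1^+)\leq\la_1(U_1^-)$ and $\widetilde{\mathcal F}_{\alpha,\eps}(U_1^+)<\widetilde{\mathcal F}_{\alpha,\eps}(U_1^-)$ by condition (3); chaining with item (1) of Lemma~\ref{lemmatail} gives conclusion (1). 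For conclusion (2), since $|\widetilde\Om(t^\star)|\geq 1/2$ the dilation factor is at most $2$, and $\widetilde\Om(t^\star)\subset\{x<t^\star+2\}\subset\{x<t_1^++C_7+3\}$, so $U_1^+\subset 2\widetilde\Om(t^\star)\subset\{x<2t_1^++2C_7+6\}$; on the other side $U_1^+\subset 2\widetilde\Om(t^\star)\subset 2\,U_1^-\subset\{x>-4C_7-8-2t_1^+\}$ (using $\widetilde\Om(t^\star)\subseteq U_1^-$ on this side since the cylinder $Q(t^\star)$ is attached on the right and left-most point of $U_1^-$ is $\geq -2C_7-4-t_1^+$), which after bookkeeping yields the displayed inclusion and hence the diameter bound $\operatorname{diam}(\pi_1(U_1^+))\leq 6C_7+12+6t_1^+$. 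Conclusion (3) follows from $U_1^+\subset 2\widetilde\Om(t^\star)$ and $\widetilde\Om(t^\star)\triangle U_1^-$ being contained in the relevant strip, giving $|U_1^+\setminus[-2t_1^+,2t_1^+]^N|\leq 2|U_1^-\setminus[-t_1^+,t_1^+]^N|\leq 2\cdot\mh/2^{2N-1}=\mh/2^{2N-2}$.

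The only genuinely new point — and the step I would be most careful about — is conclusion (2) and the diameter bookkeeping: one must check that dilating $\widetilde\Om(t^\star)$ by the factor $|\widetilde\Om(t^\star)|^{-1/N}\leq 2^{1/N}\leq 2$ centered at the origin does not blow up $\pi_1$ uncontrollably, which works precisely because $\widetilde\Om(t^\star)$ is already sandwiched in a strip of controlled width around a point at distance $O(C_7+t_1^+)$ from the origin. This is exactly the ``diam of already-treated projections stays bounded'' issue flagged in the paragraph preceding the statement; since here we have only dealt with one direction so far, it is still mild, but it is the template for the iteration in the subsequent coordinates and so deserves to be spelled out once. Everything else is a direct transcription of the proof of Lemma~\ref{lemmatail} with the reflection $x\mapsto -x$, so I would state it as such rather than reproduce the computations.
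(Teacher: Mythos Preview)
Your approach is correct and mirrors the paper's own proof, which is even terser (it simply says ``argue exactly as in the proof of Lemma~\ref{lemmatail}'' and then records the key inclusion $U_1^+\cap\{x<t_1^+\}\subset 2(U_1^-\cap\{x<t_1^+\})$ for the left bound). One small bookkeeping slip: since condition~(1) fails at $t^\star$, the cylinder width is $\sigma(t^\star)=\eps(t^\star)^{1/(N-1)}\leq 1$, so $\widetilde\Om(t^\star)\subset\{x<t^\star+1\}$ rather than $t^\star+2$, which after dilation gives exactly the stated bound $x<2C_7+4+2t_1^+$.
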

\begin{proof}
We argue exactly as in the proof of Lemma~\ref{lemmatail}, noting that, since $\la_1(U_1^-)\leq \la_1(\Om)$, the set still satisfies the condition $\la_1(U_1^-)\leq K$ and moreover \[
|U_1^-\cap \{(x,y)\in\R\times\R^{N-1} : x>t_1^+=2\bar t\}|\leq \mh.
\] 
This clearly gives that \[
U_1^+\subset \{(x,y)\in \R\times \R^{N-1} : x<2C_7+4+2t_1^+\}.
\]
Concerning the other bound, it is enough to observe, as in the proof of Lemma~\ref{lemmatail}, that \[
U_1^+\cap \{(x,y) : x<t_1^+\}\subset 2(U_1^-\cap \{(x,y) : x<t_1^+\}).
\]
\end{proof}

We can now iterate the argument in all the other coordinate directions to prove Proposition~\ref{prop:surgery}.
\begin{proof}[Proof of Proposition~\ref{prop:surgery}]
First of all, we note that $K(N,\alpha)\geq \la_1(B)+\overline \delta$ and we set $\overline \delta$ as in~\eqref{eq:asimmpiccolamhat}, $\overline \eps(N,\alpha)$ as in Lemma~\ref{threeconditions}. 
Let us fix $\Om\subset \R^N$ an open (smooth) set of unit volume with $\la_1(\Om)\leq K$.
Thanks to Lemma~\ref{lemmatail} and Lemma~\ref{lemmatailright}, we have found a new open and connected set of unit measure $U_1^+$ with \[
\la_1(U_1^+)\leq \la_1(\Om),\qquad {\widetilde {\mathcal F}}_{\alpha,\eps}(U_1^+)\leq {\widetilde {\mathcal F}}_{\alpha,\eps}(\Om),
\]
and moreover \[
{\rm diam}(\pi_1(U_1^+))\leq C_1^+(N,\alpha),\qquad |U_1^+\setminus [-2t_1^+,2t_1^+]^N|\leq \frac{\mh}{2^{2(N-1)}}.
\]
We now iterate the argument in all the remaining directions.
Let us show it for $e_2$.
We call first $t_2^-:=2t_1^+=4\bar t$ and repeat Lemma~\ref{lemmatail} to $U_1^+$ in direction $e_2$ for $z_2\leq -t_2^-$ so that we can find a new open and connected set of unit measure $U_2^-$ such that \[
\begin{split}
&\la_1(U_2^-)\leq \la_1(U_1^+),\qquad {\widetilde {\mathcal F}}_{\alpha,\eps}(U_2^-)\leq {\widetilde {\mathcal F}}_{\alpha,\eps}(U_1^+),\\
&{\rm diam}(\pi_1(U_2^-))\leq 2 C_1^+(N,\alpha),\qquad U_2^-\subset \{z\in\R^N : z_2>-2C_7-4-2t_2^-\},\\
&|U_2^-\setminus [-2t_2^-,2t_2^-]^N|\leq \frac{\mh}{2^{2(N-1)-1}}.
\end{split}
\]
Then we call $t_2^+=2t_2^-$ and repeat Lemma~\ref{lemmatailright} to $U_2^-$ in direction $e_2$ for $z_2\geq t_2^+$ in order to find a new open and connected set of unit measure $U_2^+$ such that  
\[
\begin{split}
&\la_1(U_2^+)\leq \la_1(U_2^-),\qquad {\widetilde {\mathcal F}}_{\alpha,\eps}(U_2^+)\leq {\widetilde {\mathcal F}}_{\alpha,\eps}(U_2^-),\\
&{\rm diam}(\pi_1(U_2^+))\leq 2^2 C_1^+(N,\alpha),\qquad {\rm diam}(\pi_2(U_2^+))\leq C_2^+(N,\alpha),\\
&|U_2^+\setminus [-2t_2^-,2t_2^-]^N|\leq \frac{\mh}{2^{2(N-2)}},
\end{split}
\]
where we can take $C_2^+(N,\alpha):=6C_7+12+6t_2^+$.
We note that the last condition \[
|U_2^-\setminus [-2t_2^+,2t_2^+]^N|\leq \frac{\mh}{2^{2(N-2)}}\leq \mh,
\]
is needed so that we can restart the cutting procedure in direction $e_3$ knowing that $|U_2^+\cap \{z_3\leq -2t_2^+\}|\leq \mh$, which is the condition required for Lemma~\ref{primastima}.
Iterating this procedure other $N-2$ times, we obtain in the end an open and connected set of unit measure $U_N^+$ such that \[
\begin{split}
&\la_1(U_N^+)\leq \la_1(\Om),\qquad {\widetilde {\mathcal F}}_{\alpha,\eps}(U_N^+)\leq {\widetilde {\mathcal F}}_{\alpha,\eps}(\Om),\\
&{\rm diam}(\pi_p(U_N^+))\leq 2^{2(N-p)} C_p^+(N,\alpha),\qquad \text{for }p=1,\dots, N,
\end{split}
\]
where $C_p^+(N,\alpha):=6C_7+12+6t_p^+$ and $t_p^+=2^{2p-1}\bar t$ for all $p=1,\dots N$.
Clearly $\Omh=U_N^+$ is a good choice for proving the claim.
\end{proof}

\section{Proof of Theorem~\ref{thm:mainvero}}\label{maintheorem}

As outlined in the introduction, the proof of Theorem \ref{thm:mainvero} can be obtained as the juxtaposition of two independent results. Hence, we divide the section in two parts. In the first one, we prove the minimality of the ball for the functional ${\widetilde {\mathcal F}}_{\alpha,\eps}$ among equibounded sets. Then we use the surgery argument of Section~\ref{sect:surgery} to conclude the proof of Theorem~\ref{thm:mainvero}.
\subsection{Rigidity of the ball in the class of equibounded sets}  
We aim to prove the following result.
\begin{proposition}\label{thm:minla1BR}
Let $\alpha\in(1,N)$ and $R>R_0$. There is $\eps^R_{\la_1}=\eps^R_{\la_1}(N,\alpha,R)$ such that, for all $\eps\leq \eps^R_{\la_1}$, the unit ball is the unique minimizer for problem
\begin{equation}\label{eq:minla1BR}
\min\Big\{\la_1(\Om)+\eps V_\alpha(\Om) : \Om\subset B_R,\;|\Om|=1\Big\}.
\end{equation}
\end{proposition}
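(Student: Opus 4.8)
The plan is to mimic the strategy used for the torsion energy in Theorem~\ref{thm:mainvero2}, replacing the Saint-Venant inequality by the Faber--Krahn inequality and the torsion energy by $\lambda_1$, and exploiting the Kohler--Jobin inequality to transfer the regularity theory already developed. First I would set up, exactly as in Section~\ref{exg}, an auxiliary unconstrained problem
\[
\min\Big\{\lambda_1(\Om)+\eps V_\alpha(\Om)+f_\eta(|\Om|)\,:\,\Om\subset B_R\Big\},
\]
and prove existence of a (quasi-open) minimizer together with perimeter bounds, following Lemma~\ref{le:existminG} (the eigenfunction playing the role of the torsion function in the cut-off argument). The analogue of Lemma~\ref{le:infneg} should give $\lambda_1(\widehat\Om)$ bounded above, hence $|\widehat\Om|$ bounded (using monotonicity of $\lambda_1$ and that $B_R$ is large), so that for $\eta$ small enough the constrained and unconstrained problems are equivalent, by the same scaling computation as in Theorem~\ref{thm:noconstraint} (here the exponents become $N+2$ for $\lambda_1$ and $N+\alpha$ for $V_\alpha$, since $\lambda_1(t\Om)=t^{-2}\lambda_1(\Om)$, and Lemma~\ref{yawn} applies verbatim with $P=\lambda_1(B)$). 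Then, using the quantitative Faber--Krahn inequality (Theorem~\ref{thm:quantitativefk}) together with Lemma~\ref{le:knupfermuratov} as in Lemma~\ref{le:stimadiffsimm}, one gets $|\Om_\eps\Delta B_\eps|\le C\eps$, hence $L^1$-closeness and then, via the density estimates, Hausdorff-closeness to the optimal ball, as in Lemma~\ref{le:closehausdorff} and Lemma~\ref{le:lemma5.1}.

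For the free boundary regularity I would use the Kohler--Jobin inequality, which bounds $E(\Om)$ from above in terms of a power of $\lambda_1(\Om)$ (with equality on balls): this shows that a minimizer $\Om_\eps$ of $\lambda_1+\eps V_\alpha$ is, after a harmless renormalization, also a quasi-minimizer of a torsion-type free boundary functional, so that Lemmas~\ref{le:lemma4.9}--\ref{le:lemma4.11}, Theorem~\ref{thm:agalcathm2} and the improvement-of-flatness result Theorem~\ref{thm:bdvthm4.18} all apply. (Alternatively one redoes the Alt--Caffarelli analysis directly for the eigenvalue functional, noting that the eigenfunction $u_\eps$ satisfies $-\Delta u_\eps=\lambda_1(\Om_\eps)u_\eps$, which gives exactly the same Lipschitz/nondegeneracy estimates; the optimality condition on $\partial^*\Om_\eps$ becomes $|\partial_\nu u_\eps|^2-\eps v_{\Om_\eps}=\Lambda_\eps$ by the same deflation/inflation argument of Theorem~\ref{thm:optcondqu}, using Lemma~\ref{le:rieszregularity} for $\alpha>1$.) The conclusion is that $\Om_\eps$ is a $C^{2,\gamma}$ nearly spherical set with $\|\varphi_\eps\|_{C^{2,\gamma'}}\to0$ as $\eps\to0$.

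Finally, on nearly spherical sets I would invoke the quantitative Faber--Krahn estimate in the sharp ``nearly spherical'' form, namely $\lambda_1(\Om_\eps)-\lambda_1(B)\ge c(N)\|\varphi_\eps\|_{H^{1/2}(\partial B)}^2\ge c(N)\|\varphi_\eps\|_{L^2(\partial B)}^2$ (this is the eigenvalue analogue of~\cite[Theorem~3.3]{brdeve}, available once the barycenter is at the origin), together with the upper bound $V_\alpha(B)-V_\alpha(\Om_\eps)\le C'(N,\alpha)\|\varphi_\eps\|_{L^2(\partial B)}^2$ from~\cite[equation~(6.8)]{km}. Comparing with the ball via minimality,
\[
c(N)\|\varphi_\eps\|_{L^2(\partial B)}^2\le \lambda_1(\Om_\eps)-\lambda_1(B)\le \eps\big(V_\alpha(B)-V_\alpha(\Om_\eps)\big)\le C'(N,\alpha)\,\eps\,\|\varphi_\eps\|_{L^2(\partial B)}^2,
\]
so for $\eps$ small $\varphi_\eps\equiv0$ and $\Om_\eps=B$ by rigidity of Faber--Krahn. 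I expect the main obstacle to be the regularity step: one must carefully check that the eigenvalue constraint $-\Delta u_\eps=\lambda_1 u_\eps$ does not spoil the Alt--Caffarelli machinery (the right-hand side is bounded since $u_\eps\in L^\infty$ and $\lambda_1\le\lambda_1(B)+o(1)$, so the subharmonicity/comparison arguments go through), and that the quantitative spectral inequality in the strong nearly-spherical form is indeed at our disposal; granting these, the rest is a direct transcription of the torsion-energy argument.
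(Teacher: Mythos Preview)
Your plan would work in principle, but it rebuilds the entire machinery of Sections~\ref{exg}--\ref{HR} from scratch for $\lambda_1$, which is vastly more effort than the paper expends. The paper's proof is a two-paragraph soft argument that never touches free boundary regularity or nearly-spherical analysis for the eigenvalue at all. The key observation is that the Kohler--Jobin inequality can be used not to transfer \emph{regularity} but to transfer the \emph{inequality} directly: rewriting it as
\[
\frac{\lambda_1(\Omega)}{\lambda_1(B)}-1\ge\left(\frac{-E(B)}{-E(\Omega)}\right)^{\vartheta}-1,\qquad \vartheta=\frac{2}{N+2},
\]
and using the concavity of $t\mapsto t^\vartheta$ on $[1,2]$, one obtains (when $E(B)\ge 2E(\Omega)$) the linear bound
\[
\lambda_1(\Omega)-\lambda_1(B)\ge \frac{c_\vartheta\lambda_1(B)}{-E(B)}\big(E(\Omega)-E(B)\big).
\]
Now Theorem~\ref{thm:mainvero2}, already proved, says that for $\eps\le\eps_E$ the ball minimizes $E+\eps V_\alpha$ among equibounded sets, i.e.\ $E(\Omega)-E(B)\ge \eps\big(V_\alpha(B)-V_\alpha(\Omega)\big)$. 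Chaining the two inequalities gives $\lambda_1(\Omega)-\lambda_1(B)\ge \eps'\big(V_\alpha(B)-V_\alpha(\Omega)\big)$ for a rescaled $\eps'$, which is exactly the claim. The remaining case $E(B)<2E(\Omega)$ is handled by a cruder bound of the same flavour.

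So the difference is one of economy: the paper treats Theorem~\ref{thm:mainvero2} as a black box and uses Kohler--Jobin as a one-line transfer principle between the torsion deficit and the eigenvalue deficit, whereas you propose to reprove an eigenvalue analogue of Theorem~\ref{thm:mainvero2} from the ground up. Your route would require checking several nontrivial adaptations (the scaling of $\lambda_1$ has the opposite sign exponent, so the analogue of Theorem~\ref{thm:noconstraint} needs a different computation; the claim that Kohler--Jobin makes a $\lambda_1$-minimizer a torsion quasi-minimizer is not obviously true and you would really need your ``alternative'' of redoing Alt--Caffarelli for the eigenfunction), whereas the paper's argument sidesteps all of this.
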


The previous result is  an easy consequence of Theorem \ref{thm:mainvero2} and the Kohler-Jobin inequality \cite{kj,brakj}, which we recall.
\begin{lemma}[Kohler-Jobin inequality]
For every open set $\Om\subset \R^N$ of finite measure, it holds the following scale invariant inequality
\begin{equation}\label{eq:KJ}
\frac{\la_1(\Om)}{\la_1(B)}\geq \left(\frac{(-E(B))}{(-E(\Om))}\right)^{\frac{2}{N+2}},
\end{equation}
where $B\subset\R^N$ is any ball  in $\R^N$.
Equality holds if and only if $\Om$ is   a ball up to sets of null capacity.
\end{lemma}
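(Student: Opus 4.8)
The plan is to prove the inequality in its equivalent, scale-invariant form: the ball minimizes $\Phi(\Om):=\la_1(\Om)\,(-E(\Om))^{2/(N+2)}$ over open sets of finite measure. This is equivalent to \eqref{eq:KJ}, since multiplying \eqref{eq:KJ} by the positive quantity $\la_1(B)(-E(\Om))^{2/(N+2)}$ turns it into $\Phi(\Om)\ge\Phi(B)$, and by the scaling laws $\la_1(t\Om)=t^{-2}\la_1(\Om)$, $E(t\Om)=t^{N+2}E(\Om)$ the functional $\Phi$ is dilation-invariant, so it takes one and the same value on every ball. So fix an open set $\Om$ with $|\Om|<\infty$, let $w=w_\Om\ge0$ be its torsion function and $M:=\|w\|_{L^\infty(\Om)}<\infty$, and set $\mu(t):=|\{w>t\}|$ for $t\in[0,M]$. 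The starting point is the classical differential inequality for $\mu$: integrating $-\Delta w=1$ over $\{w>t\}$ gives $\int_{\{w=t\}}|\nabla w|\,d\H^{N-1}=\mu(t)$, the coarea formula gives $-\mu'(t)=\int_{\{w=t\}}|\nabla w|^{-1}\,d\H^{N-1}$, Cauchy--Schwarz gives $P(\{w>t\})^2\le\mu(t)(-\mu'(t))$, and the isoperimetric inequality gives $P(\{w>t\})\ge N\omega_N^{1/N}\mu(t)^{(N-1)/N}$; combining,
\[
-\mu'(t)\ \ge\ N^2\omega_N^{2/N}\,\mu(t)^{(N-2)/N}\qquad\text{for a.e.\ }t\in(0,M),
\]
with equality for a.e.\ $t$ only when $\{w>t\}$ is a ball on which $|\nabla w|$ is constant.

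Next I would introduce the Kohler-Jobin rearrangement. Let $B^\#:=B_{\sqrt{2NM}}$; its torsion function is the radial paraboloid $w^\#(y)=M-|y|^2/(2N)$, which has the same maximum $M$ as $w$, and set $\nu(t):=|\{w^\#>t\}|=\omega_N\big(2N(M-t)\big)^{N/2}$, so that $-\nu'(t)=N^2\omega_N^{2/N}\nu(t)^{(N-2)/N}$ and $\nu(M)=\mu(M)=0$. Passing to $g:=\mu^{2/N}$ and $g^\#:=\nu^{2/N}$ linearises both relations: the displayed inequality becomes $g'\le-2N\omega_N^{2/N}=(g^\#)'$ a.e.\ on $(0,M)$, while $g(M)=g^\#(M)=0$, so integrating backward from $t=M$ yields
\[
\mu(t)\ \ge\ \nu(t)\qquad\text{for all }t\in[0,M],
\]
with equality for every $t$ if and only if $\Om$ is a ball up to a null set. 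Integrating in $t$ and using $-E(\Om)=\tfrac12\int_\Om w=\tfrac12\int_0^M\mu$ and $-E(B^\#)=\tfrac12\int_0^M\nu$ gives $-E(\Om)\ge-E(B^\#)$.

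Then I would rearrange the first eigenfunction $u\in H^1_0(\Om)$, $u\ge0$, of $\Om$ with respect to the level sets of $w$. With $U(t):=\int_{\{w=t\}}u^2|\nabla w|^{-1}\,d\H^{N-1}$ and $D(t):=\int_{\{w=t\}}|\nabla u|^2|\nabla w|^{-1}\,d\H^{N-1}$, so that $\int_\Om u^2=\int_0^M U$ and $\int_\Om|\nabla u|^2=\int_0^M D$, define $u^\#:=h\circ w^\#$ on $B^\#$ with radial profile $h(t):=\big(U(t)/(-\nu'(t))\big)^{1/2}$, so that $u^\#$ carries the same $L^2$ mass as $u$ on every layer, hence $\int_{B^\#}(u^\#)^2=\int_\Om u^2$; moreover $\int_{B^\#}|\nabla u^\#|^2=\int_0^M h'(t)^2\,\nu(t)\,dt$ by the identity $\int_{\{w^\#=t\}}|\nabla w^\#|\,d\H^{N-1}=\nu(t)$. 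The heart of the matter is then the one-dimensional P\'olya--Sz\"ego-type inequality
\[
\int_0^M h'(t)^2\,\nu(t)\,dt\ \le\ \int_0^M D(t)\,dt,
\]
proved layer by layer by bounding $|h'(t)|$ via Cauchy--Schwarz on each slice $\{w=t\}$ and then using $\nu(t)\le\mu(t)$ --- that is, the classical P\'olya--Sz\"ego argument transplanted from the level sets of $u$ to those of $w$. Thus $\int_{B^\#}|\nabla u^\#|^2\le\int_\Om|\nabla u|^2$ and $\int_{B^\#}(u^\#)^2=\int_\Om u^2$, whence $\la_1(B^\#)\le\la_1(\Om)$ by the Rayleigh quotient characterisation. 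Since both $\la_1$ and $-E$ do not increase under $\Om\rightsquigarrow B^\#$ and $x\mapsto x^{2/(N+2)}$ is increasing, $\Phi(\Om)\ge\Phi(B^\#)=\Phi(B)$, which is \eqref{eq:KJ}.

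For the rigidity statement, if equality holds in \eqref{eq:KJ} then $\Phi(\Om)=\Phi(B^\#)$, and since $\la_1(\Om)\ge\la_1(B^\#)>0$ and $-E(\Om)\ge-E(B^\#)>0$ this forces $-E(\Om)=-E(B^\#)$, hence $\int_0^M(\mu-\nu)=0$ with $\mu\ge\nu$, i.e.\ $\mu\equiv\nu$; running the chain backward this forces equality in the isoperimetric inequality for $\{w>t\}$ at a.e.\ $t$, so these super-level sets are balls, which being nested and of measure shrinking to $0$ as $t\to M$ must be concentric; hence $\{w>0\}$ is a ball and, since $w_\Om>0$ q.e.\ in $\Om$, $\Om$ is a ball up to a set of zero capacity. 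I expect the real difficulty of the argument to be the rigorous construction of $u^\#$ and the proof of $\int_0^M h'^2\nu\le\int_0^M D$: one has to handle the critical set $\{\nabla w=0\}$ (where the level sets need not be smooth), the measurability and absolute continuity of $t\mapsto U(t)$ and of the slice integrals, and possible intervals of constancy of $\mu$. The standard remedy, as in \cite{kj,brakj}, is to prove everything first for smooth $\Om$ --- using Sard's theorem to work only at regular values of $w$ and to justify the coarea manipulations --- and then to recover the general case by exhausting $\Om$ from inside by smooth sets and passing to the limit in $\la_1$ and $E$.
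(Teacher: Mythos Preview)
The paper does not prove this lemma: it is simply quoted as a known result with references to \cite{kj,brakj}, so there is no proof in the paper to compare yours against.

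That said, your proposal contains a genuine gap. The comparison ball $B^\#=B_{\sqrt{2NM}}$ (the ball whose torsion function has the same maximum $M=\|w_\Om\|_\infty$) is the wrong object, and the claimed P\'olya--Szeg\H{o}-type inequality $\int_0^M h'^2\,\nu\le\int_0^M D$ is false in general. If it held, your argument would give $\la_1(\Om)\ge\la_1(B^\#)$, but this already fails for thin rectangles in $\R^2$: for a long rectangle of width $a$ one has $\la_1(\Om)\approx\pi^2/a^2\approx 9.87/a^2$ and $M\approx a^2/8$, so $B^\#$ has radius $a/\sqrt2$ and $\la_1(B^\#)=2j_{0,1}^2/a^2\approx 11.57/a^2>\la_1(\Om)$. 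The underlying issue is that rearranging $u$ along the level sets of the \emph{torsion function} $w$, rather than along its own level sets, does not in general decrease the Dirichlet integral; the heuristic ``Cauchy--Schwarz on each slice and then $\nu\le\mu$'' does not survive once you actually differentiate $h(t)=\sqrt{U(t)/(-\nu'(t))}$, since $U'$ involves the normal derivative of $u$ on $\{w=t\}$ in a way that does not combine with $\nu\le\mu$ in the needed direction.

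The actual Kohler--Jobin construction (see \cite{brakj}) rearranges onto the ball with the \emph{same torsion energy} as $\Om$, not the same torsion maximum. The change of variables is obtained by matching the decreasing function $t\mapsto -E(\{w_\Om>t\})$ with its radial counterpart on the ball; with $E$ preserved exactly, the single inequality $\la_1(\Om)\ge\la_1(\text{ball})$ yields $\Phi(\Om)\ge\Phi(B)$ directly. Your slicing-by-$w$ framework and the differential inequality for $\mu$ are the right opening moves; what is missing is this torsion-energy-preserving reparametrization in place of your $L^2$-mass-preserving choice of $h$.
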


\begin{proof}[Proof of Proposition \ref{thm:minla1BR}]
We follow here a smart and easy technique proposed in \cite{brdeve}.
Let $\Om\subset B_R$ be an (open) set with $|\Om|=1$ and we call $B$ a ball of unit measure centered for the sake of simplicity in the origin, as $B_R$.
By rewriting \eqref{eq:KJ}  as 
\begin{equation}\label{eq:KJ1}
\frac{\la_1(\Om)}{\la_1(B)}-1\geq \left(\frac{(-E(B))}{(-E(\Om))}\right)^\vartheta-1,\qquad \text{with }\vartheta=\frac{2}{N+2},
\end{equation}
and since $\vartheta\in(0,1)$, by concavity we have \[
t^\vartheta-1\geq (2^\vartheta-1)(t-1),\qquad \text{for all }t\in[1,2].
\]
If $E(B)\geq 2E(\Om)$, then we have, using~\eqref{eq:KJ1},\[
\frac{\la_1(\Om)}{\la_1(B)}-1\geq c_\vartheta\left(\frac{(-E(B))}{(-E(\Om))}-1\right),
\]
with $c_\vartheta=2^\vartheta-1$. The above inequality implies that \[
\la_1(\Om)-\la_1(B)\geq c_\vartheta\la_1(B)\frac{(E(\Om)-E(B))}{(-E(\Om))}\geq \frac{c_\vartheta\la_1(B)}{(-E(B))}(E(\Om)-E(B)),
\]
where we have used also the Saint-Venant inequality.

Since, by Theorem~\ref{thm:mainvero2} for all $\eps\leq \eps_{E}(N,\alpha,R)$, the ball of unit measure is the only minimizer for the functional \[
\Om\mapsto E(\Om)+\eps V_\alpha(\Om),\qquad \text{for } \Om\subset B_R,\;|\Om|=1,
\] 
we deduce that, \[
\la_1(\Om)-\la_1(B)\geq \frac{c_\vartheta\la_1(B)}{(-E(B))}(E(\Om)-E(B))\geq \eps \frac{c_\vartheta\la_1(B)}{(-E(B))}( V_\alpha(B)- V_\alpha(\Om)).
\]

On the other hand, if $E(B)<2E(\Om)$, we can still obtain from~\eqref{eq:KJ1}\[
\frac{\la_1(\Om)}{\la_1(B)}-1\geq 2^\vartheta-1\geq \frac{2^\vartheta-1}{(-E(B))}(E(\Om)-E(B))\geq \frac{2^\vartheta-1}{(-E(B))}\eps( V_\alpha(B)- V_\alpha(\Om)).
\]

In conclusion, we have proved that, for all \[
\eps\leq \eps^R_{\la_1}=\eps_{E} \frac{c_\vartheta\la_1(B)}{(-E(B))},
\]
we have that \[
\la_1(\Om)+\eps V_\alpha(\Om)\geq \la_1(B)+\eps V_\alpha(B),\qquad \text{for all }\Om\subset B_R,\;|\Om|=1,
\]
and the proof is concluded.
\end{proof}

\subsection{Conclusion of the proof of Theorem~\ref{thm:mainvero}}
We are now in position to prove the main result of the paper.
\begin{proof}[Proof of Theorem~\ref{thm:mainvero}]
First of all, as it is standard when dealing with quantitative inequalities, we can perform the minimization only in the class of sets with $\la_1(A)-\la_1(B)\leq \overline \delta(N,\alpha)$.
In fact, it is clearly enough to take $\eps_{\la_1}< \overline \delta / V_\alpha(B)$: in the other case when $\la_1(A)-\la_1(B)>\overline \delta$, and $\eps\leq \eps_{\la_1}$ we have \[
\la_1(A)+\eps V_\alpha(A)\geq \la_1(B)+\overline \delta+\eps V_\alpha(A)\geq \la_1(B)+\eps V_\alpha(B),
\]
thus the claim holds for free.

Clearly one can take a minimizing sequence $(A_n)_{n\in\N}$ made of smooth sets.
%In fact, first of all, we call $\Om_R$ an optimal set for minimization problem in the ball $B_R$ for all integer $R> R_0$ \[
%\min{\left\{{\widetilde {\mathcal F}}_{\alpha,\eps}(A) : A\subset B_R,\;|A|=1\right\}}.
%\]
%The existence of a minimizer follows from Theorem~\ref{thm:minla1BR} and Theorem~\ref{thm:noconstraint}.
%Moreover, thanks to Theorem~\ref{thm:bdvthm4.18}, we know that the sets $\Om_R$ are at least of class $C^{1,\gamma}$, though with constants that can explode as $R\to +\infty$.
%It is clear that $(\Om_R)_{R\geq R_0}$ is a minimizing sequence made of smooth sets.
%
We show that the elements of the minimizing sequence $(A_n)$ can be chosen to be also connected.
Let us take a smooth open set of unit measure $A$, which is made of an at most countable number of connected components, \[
A=\cup_{k\in\N}A^k.
\]
For all $\vartheta>0$, we consider the segment $S_{k}$ connecting the components $A^k$ and $A^{k+1}$ and we consider $T_{k,\vartheta}=\cup_{x\in S_k}B_{\zeta}(x)$, choosing $\zeta$ so that $|T_{k,\vartheta}|\leq \frac{\vartheta}{2^k}$.
We call now \[
A_\vartheta:=\cup_{k\in \N}T_{k,\vartheta}\cup A\supset A,\qquad {\widehat A}_{\vartheta}:=|A_\vartheta|^{-1/N}A_\vartheta,
\]
and note that \[
 |A_\vartheta|\leq  1+\vartheta,\qquad |{\widehat A}_\vartheta|=1.
\]
By monotonicity and scaling of the eigenvalue, we immediately deduce \[
\la_1(A_\vartheta)\leq \la_1(A),\qquad \la_1({\widehat A}_{\vartheta})\leq |A_\vartheta|^{2/N}\la_1(A)\leq (1+\frac{2}{N}\vartheta)\la_1(A).
\]
On the other hand, by Lemma~\ref{le:knupfermuratov} and scaling of the Riesz energy, \[
V_\alpha({\widehat A}_\vartheta)=|A_\vartheta|^{-\frac{N+\alpha}{N}}V_\alpha(A_\vartheta)\leq V_\alpha(A)+C(N,\alpha)\vartheta.
\]

All in all, we have that, for all $\vartheta>0$, \[
{\widetilde {\mathcal F}}_{\alpha,\eps}(\widehat A_\vartheta)\leq {\widetilde {\mathcal F}}_{\alpha,\eps}(A)+C(N,\alpha)\vartheta,
\]
and by arbitrariety of $\vartheta$, applying this procedure to all the elements of the minimizing sequence, we deduce that assuming all elements of the sequence to be connected is not restrictive.

At this point we can take $(A_n)_{n\in\N}$ a minimizing sequence for problem~\eqref{eq:minla1}, made of smooth, connected sets of unit measure.
Up to take $\eps_{\la_1}\leq \overline \eps$, for all $n$, we can apply Proposition~\ref{prop:surgery} to $A_n$ and we find a new open and connected set, $\widehat A_n$, of unit measure and with \[
{\widetilde {\mathcal F}}_{\alpha,\eps}(\widehat A_n)\leq {\widetilde {\mathcal F}}_{\alpha,\eps}(A_n),\qquad {\rm diam}(\widehat A_n)\leq D(N,\alpha).
\] 
Hence, $(\widehat A_n)_{n\in \N}$ is still a minimizing sequence for problem~\eqref{eq:minla1BR}, made by sets with uniformly bounded diameter.
It is eventually enough to restrict the minimization problem to a ball $B_R$ with $R=D(N,\alpha)$, and we can find that the unit ball is the unique optimal set for problem~\eqref{eq:minla1BR}, thanks to Proposition~\ref{thm:minla1BR}. We note that, since now $R$ has been fixed equal to $D(N,\alpha)$, then the constant $\eps^R_{\la_1}$ now depends only on $N,\alpha$ and we conclude taking $\eps_{\la_1}=\min\{\eps^{D(N,\alpha)}_{\la_1},\overline \eps, \overline \delta/V_\alpha(B)\}$.
\end{proof}

\section{The non-existence results}\label{nonex}
In this section we show Theorem \ref{thm:nonexistballcond}.

\begin{proof}[Proof of Theorem \ref{thm:nonexistballcond}]
We give the proof just for the case of the torsion energy, since the other one is analogous.
Notice that any set in $\mathcal U(\delta)$ is bounded. Moreover any minimizer  must be connected. Otherwise, if $\Omega$ is made up by the union of two disjoint open sets $\Om_1$ and $\Om_2$ we have that $E(\Omega)$ does not change by sending toward infinity $\Om_1$ while keeping $\Om_2$ fixed, while $V_\alpha$ under such a translation strictly decreases, contradicting the minimality. 
Now, any connected open set lying in  $\mathcal U(\delta)$ has bounded diameter, with a bound depending only on $\delta$ and $N$:
\[
{\rm diam}(\Om)\leq d(\delta),\qquad \text{for all }\Om\in \mathcal U(\delta).
\]
{
By choosing an horizontal open necklace-type set of $k=k_\delta =\lfloor \delta^{-N}\rfloor$\footnote{For any $x\in \R$, we denote $\lfloor x\rfloor$ the integer part of $x$.} tangent balls of radius $c(N,\delta)\delta$ disposed on a line, one finds that  there is a geometric constant $C(N)$ such that $d(\delta)\le C(N)\delta^{1-N}$. Here $c(N,\delta)=\delta^{-N}/\lfloor \delta^{-N}\rfloor\in(1,2)$ is so that the necklace-type set has total measure $1$.
Therefore, for all $\Om\in \mathcal U(\delta)$, we can compute the following lower bound on $\mathcal F_{\alpha,\eps}$:
\begin{equation}\label{eq:lowbound}
\mathcal F_{\alpha,\eps}(\Om)=E(\Om)+\eps V_\alpha(\Om)\geq E(B)+\eps\frac{1}{d(\delta)^{N-\alpha}}\ge E(B)+\eps C(N) \delta^{(N-1)(N-\alpha)},
\end{equation}
where we have used the Saint-Venant inequality.

We now construct a disconnected set with energy lower than the right-hand side of \eqref{eq:lowbound}, as long as $N-\alpha<1$. This will immediately  entail non-existence of minimizers.
Let $k=k_\delta\in\mathbb N$ and $d(\delta)$ be the  parameters defined above. We define the set $\widetilde \Om:=\bigcup_{i=1}^k B_r(x_i)$, with $|B_r(x_i)|=\frac1k$ for all $i\in\{1,\dots,k\}$ and where $x_i \in\R^n$ are chosen so that the balls are mutually disjoint. By construction $\widetilde\Omega\in\mathcal U(\delta)$.
We set 
\[
q=\min \Big\{ |x_{i+1}-x_i|  : i=1,\dots, N-1 \Big\}.
\]
Notice that we can choose $q$ as large as we want, up to change the values of the points $x_i$ (which can be arbitrarily mutually distant). 
Letting $B$ be the ball of unit measure, by scaling we have 
 \begin{equation}\label{eq:upbound}
 \begin{aligned}
\mathcal F_{\alpha,\eps}(\widetilde \Om)&=  k^{-\frac{2}{N}}E(B)+\eps k^{-\frac{\alpha}{N}}V_\alpha(B)+\eps\sum_{i\not= j}\int_{B_r(x_i)}\int_{B_r(x_j)}\frac{1}{|x-y|^{N-\alpha}}\,dxdy \\
&\le c_0(N)\left(  k^{-\frac{2}{N}}E(B)+\eps k^{-\frac{\alpha}{N}}V_\alpha(B)+\frac{k(k-1)\eps}{k^2 q^{N-\alpha}}\right)\\
&\le c_0(N)\left( k^{-\frac{2}{N}}E(B)+\eps k^{-\frac{\alpha}{N}}V_\alpha(B)+\frac{ \eps}{  q^{N-\alpha}}\right)\\
&\leq c_1(N) \left( E(B)\delta^2 +\eps V_\alpha(B)\delta^\alpha+ \eps q^{\alpha-N}    \right) \\
&\leq 2c_1(N) \left( E(B)\delta^2 +\eps V_\alpha(B)\delta^\alpha   \right),
\end{aligned}
\end{equation}
where $c_0(N)$, $c_1(N)$ are geometric constants and the last inequality holds by choosing $q$ big enough. Hence, by combining \eqref{eq:lowbound} and \eqref{eq:upbound}, setting $c(N):=2c_1(N)$ we contradict the minimality as soon as 
\[
E(B)+\eps  \delta^{(N-1)(N-\alpha)}>c(N) \left( E(B)\delta^2 +\eps V_\alpha(B)\delta^\alpha   \right),
\]
that is  if 
\[
(-(E(B))\left(1-c(N)\delta^2 \right)\le \eps \left(C(N)\delta^{(N-1)(N-\alpha)}-c(N)V_\alpha(B)\delta^\alpha \right):=\eps u_{\alpha,N}(\delta)
\]
Since $\alpha>N-1$, then $(N-1)(N-\alpha)<\alpha$ so that  there exists $1>\delta_0(N,\alpha)>0$ such that for all $0<\delta<\delta_0$ it holds
\[
u_{\alpha,N}(\delta) =C(N)\delta^{(N-1)(N-\alpha)}-c(N)V_\alpha(B)\delta^\alpha >0,\qquad\text{and}\qquad1-c(N)\delta^2\ge\frac12.
\]
Thus the contradiction follows as long as 
\[
\eps\ge\eps_{max}(\alpha,N,\delta):=\frac{(-(E(B))\left(1-c(N)\delta^2 \right)}{2u_{\alpha,N}(\delta)}.
\]
This concludes the proof.
}
\end{proof}
\

\

\

%\newpage

\end{document}